\newtheorem{lem}{Lemma}[section]
\newtheorem{thm}[lem]{Theorem}
\newtheorem{pro}[lem]{Proposition}
\newtheorem{cor}[lem]{Corollary}
\newtheorem{cdns}[lem]{Conditions}
\newcommand{\slrw}[1]{\stackrel{#1}{\longrightarrow}}
\newcommand{\ttn}[1]{\tau_n^{#1}}
\newcommand{\lrw}{\longrightarrow}
\newcommand{\LL}{\Lambda}
\newcommand{\xa}{\alpha}
\newcommand{\xb}{\beta}
\newcommand{\caG}{\mathcal G}
\newcommand{\bcaG}{\overline{\mathcal G}}
\newcommand{\bcaL}{\overline{\mathcal L}}
\newcommand{\caD}{\mathcal D}
\newcommand{\caM}{\mathcal M}
\newcommand{\caI}{\mathcal I}
\newcommand{\caU}{\mathcal U}
\newcommand{\caP}{\mathcal P}
\newcommand{\caS}{\mathcal S}
\newcommand{\RHom}{\mathbf{R}\strut\kern-.2em\operatorname{Hom}\nolimits}
\newcommand{\zZ}{{\mathbb Z}}
\newcommand{\tL}{\widetilde{\LL}}
\newcommand{\dtL}{\Delta{\LL}}
\newcommand{\dtsL}{\Delta_{\sigma}{\LL}}
\newcommand{\dtnL}{\Delta_{\nu}{\LL}}
\newcommand{\Db}{\mathcal D^b}
\newcommand{\olL}{\overline{\LL}}
\newcommand{\GG}{\Gamma}
\newcommand{\trho}{\tilde{\rho}}
\newcommand{\olG}{\overline{\Gamma}}
\newcommand{\tQ}{\widetilde{Q}}
\newcommand{\olQ}{\overline{Q}}
\newcommand{\olrho}{\overline{\rho}}
\newcommand{\OO}{\Omega}
\newcommand{\identity}{\mathds{1}}
\newcommand{\hhom}{\mathrm{Hom}}
\newcommand{\hhm}{\mathrm{Hom}}
\newcommand{\Hom}{\mathrm{Hom}}
\newcommand{\End}{\mathrm{End}}
\newcommand{\Ext}{\mathrm{Ext}}
\newcommand{\Ho}{{\rm H}}
\newcommand{\zzs}[1]{{\mathbb Z|_{#1} }}
\newcommand{\uhom}{\underline{\mathrm{Hom}}}
\newcommand{\ind}{\mathrm{ind}\,}
\newcommand{\Ker}{\mathrm{Ker}\,}
\newcommand{\add}{\mathrm{add}\,}
\newcommand{\tr}{\mathrm{Tr}\,}
\newcommand{\gl}{\mathrm{gl.dim}\,}
\newcommand{\mmod}{\mathrm{mod}\,}
\newcommand{\bfP}{\mathbf{P}}
\newcommand{\caC}{\mathcal{C}}
\newcommand{\caA}{{\mathcal A}}
\newcommand{\dmn}{\mathrm{dim}\,}
\newcommand{\arr}[2]{\begin{array}{#1}#2\end{array}}
\newcommand{\eqqc}[2]{\begin{equation}\label{#1}#2\end{equation}}
\newcommand{\eqqcn}[2]{\[ #2 \]}
\newcommand{\eqqcnn}[2]{$ #2 $}
\begin{document}
\title[$\mathbb Z Q$ type constructions]{$\mathbb Z Q$ type constructions in higher representation theory}
\author[Guo, Lu and Luo]{Jin Yun Guo and Xiaojian Lu\\ LCSM( Ministry of Education), School of Mathematics and Statistics\\ Hunan Normal University\\ Changsha, Hunan 410081, P. R. China\\
Deren Luo \\School of Mathematics \\ Hunan Institute of Science and Technology\\Yueyang, Hunan 414000, P. R. China}
\email{gjy@hunnu.edu.cn(Guo), 774270313@qq.com(Lu), luoderen@126.com(Luo)}

\thanks{This work was supported by National Natural Science Foundation of China  under Grants \#11271119 and \#11671126 and the Construct Program of the Key Discipline in Hunan Province}

\maketitle

\begin{abstract}
Let  $Q$ be an acyclic quiver, it is classical that certain truncations of the translation quiver $\mathbb Z Q$ appear in the Auslander-Reiten quiver of the path algebra $kQ$.
The stable $n$-translation quiver $\mathbb Z|_{n-1} Q$ is introduced  as a generalization of the $\mathbb Z Q$ construction in studying higher representation theory of algebras for an acyclic bound quiver $Q$.
In this paper, we find conditions for a Hom-finite Krull-Schmidt $k$-category to be realized as the bound path category of a convex full subquiver of an stable $n$-translation quiver.We show that for  $n$-slice algebra $\Gamma$, which is an $n$-hereditary algebra whose $(n+1)$-preprojective algebra is $(q+1,n+1)$-Koszul,  with bound quiver $Q^{op}$, its $n$-preprojective and $n$-preinjective components in the module category and truncations of  the stable $n$-translation quiver $\mathbb Z|_{n-1} Q^{op}$. 
We also use $\mathbb Z|_{n-1} Q^{op}$ to describe the $\nu_n$-closure of $\Gamma$ in the derived category.

\noindent{\bf Keywords:} $n$-almost split sequence; bound path category; Auslander-Reiten quiver; $n$-translation quiver; $n$-slice algebra.

\noindent{\bf Mathematics Subject Classification (2010)} 16G20;16G70.
\end{abstract}

%

\section{Introduction}
\label{intro}

The path algebra of an acyclic quiver $Q$ has very nice representation theory, and the translation quiver $\zZ Q$ has played an important role in its representation theory.
The preprojective component and preinjective component of the Auslander-Reiten quiver of the path algebra $kQ$ are realized as truncations of the translation quiver $\zZ Q$ \cite{ars95,r84}.
The translation quiver $\zZ Q$ also appears in the derived category $\caD^b(kQ)$  as components arising from the preprojective and preinjective  modules \cite{h88}.
On the other hand, the quiver $Q$ is a slice of $\zZ Q$ and $kQ$ is the algebra defined by a slice of $\zZ Q$.
In this paper, we study a class of algebras for which the $n$-translation quiver $\zZ|_{n-1} Q$, a generalized construction of $\zZ Q$ (see Section \ref{sec:qzq}), plays a similar role in the setting of higher representation theory.

Higher representation theory is developed by Iyama and his coauthors \cite{hio14,i007,i07,i11,io11}, and is widely used in representation theory of algebra and noncommutative algebraic geometry \cite{mm11,himo}.
We observed that graded self-injective algebras bear certain feature of higher representation theory \cite{gw00,g12}, and we introduce $n$-translation algebras for studying higher representation theory related to graded self-injective algebras \cite{g16}.

We introduce $n$-slice and $n$-slice algebra related to a stable $n$-translation quiver in Section \ref{sec:qzq}.
Such quiver and algebra are quadratic dual of the complete $\tau$-slice and the $\tau$-slice algebra of a stable $n$-translation quiver.
In fact, $n$-slice algebras are exactly $n$-hereditary algebras whose $(n+1)$-preprojective algebras are $(n+1,q+1)$-Koszul (see Theorem \ref{suf_nec}).
Let $\GG$ be an $n$-slice algebra with bound quiver $Q^{\perp}$ and let $\LL=\GG^{!,op}$ be its quadratic dual with bound quiver $Q$.
Then $\zzs{n-1}Q$ is an $n$-translation quiver, and it is the bound quiver of the $n$-translation algebra $\olL=\dtL\# k\zZ^*$, the smash product of the trivial extension $\dtL$ of $\LL$ with $\zZ^*$ (Proposition \ref{0nap:extendible1}).
In this case, $Q$ is a complete $\tau$-slice in $\zzs{n-1}Q$ and $\LL$  a $\tau$-slice algebra of $\zzs{n-1}Q$.
Using results in \cite{g16}, certain full convex subquiver of $\zzs{n-1}Q$ have the property that the bound path category of its quadratic dual  has $n$-almost split sequences (see Section \ref{sec:pre}).
In this paper, we describe when a Hom-finite Krull-Schmidt $k$-category is equivalent to the bound path category of a convex full subquiver of  $\zzs{n-1}Q^{\perp}$.
Using this equivalence, we show that 
the $n$-preinjective component and $n$-preprojective component of $n$-slice algebra  are realized  as the bound path categories of some  truncations of $\mathbb Z|_{n-1} Q^{op,\perp}$, and their Auslander-Reiten quivers are truncations of $\mathbb Z|_{n-1} Q^{op,\perp}$.
We also show that for an $n$-slice algebra $\GG$  of infinite type, the $\nu_n$-closure of $\GG$ in the derived category has the Auslander-Reiten quiver $\mathbb Z|_{n-1} Q^{op,\perp}$.

The paper is organized as follows.
In Section \ref{sec:pre}, concepts and results needed in this paper are recalled.
In Section \ref{sec:qzq},  $n$-properly-graded quivers, $n$-slice quivers and $n$-slice algebras are introduce, and the $\zZ|_{n-1} Q$ construction for these quivers are recalled.
In Section \ref{sec:bpc}, the bound quiver categories, especially, those arise from the quadratic dual of a stable $n$-translation quivers are discussed.
The category of finite generated projective modules of the quadratic dual of an acyclic $n$-translation algebra is also dealt in this section, and conditions for a Hom-finite Krull-Schmidt $k$-category to be realized as the  bound path category of a truncation of an $n$-translation quiver $\zzs{n-1} Q^{op,\perp}$ are found.
It is proven in Section \ref{sec:ttn} that for an $n$-slice algebra with bound quiver $Q^{\perp}$, its  $n$-preprojective and $n$-preinjective components are truncations of $\mathbb Z|_n Q^{\perp}$ and in Section \ref{sec:nu} that when an $n$-slice algebra is of infinite type and satisfies Condition \ref{conditionbb}, their $\nu_n$-closure in the derived category is equivalent to the bound path category of $\mathbb Z|_n Q^{op,\perp}$.

\section{Preliminary}
\label{sec:pre}

Let $k$ be a field, and let $\LL = \LL_0 + \LL_1+\cdots$ be a graded algebra over $k$ with $\LL_0$ a direct sum of copies of $k$ such that $\LL$ is generated by $\LL_0$ and $\LL_1$.
Such algebra is determined by a bound quiver $Q= (Q_0,Q_1, \rho)$ \cite{g16}.

Recall that a bound quiver $Q= (Q_0,Q_1, \rho)$ is a quiver with $Q_0$ the set of vertices, $Q_1$ the set of arrows and $\rho$ a set of relations.
In this paper, the vertex set $Q_0$ may be infinite and the arrow set $Q_1$ is assumed to be locally finite.
The arrow set $Q_1$ is usually defined with two maps $s, t$ from $Q_1$ to $Q_0$ to assign an arrow $\alpha$ its starting vertex $s(\alpha)$ and its terminating vertex $t(\alpha)$.
Write $Q_t$ for the set of paths of length $t$ in the quiver $Q$, and write $kQ_t$ for space spanned by $Q_t$.
$kQ =\bigoplus_{t\ge 0} kQ_t$ is the path algebra of quiver $Q$ over $k$.
We also write  $s(p)$ for the starting vertex of a path $p$ and $t(p)$ for the terminating vertex of $p$.
The relation set $\rho$ is a set of linear combinations of paths of length $\ge 2$.
We assume that it is normalized such that each element in $\rho$ is a linear combination of paths starting at the same vertex and ending at the same vertex, we also assume that these paths are of the same length since we deal with graded algebra.
Conventionally, modules are assumed to be left module in this paper.

Let $\LL_0=  \bigoplus\limits_{i\in Q_0} k_i$, with $k_i \simeq k$ as algebras, and let $e_i$ be the image of the identity in the $i$th copy of $k$ under the canonical embedding.
Then $\{e_i| i \in Q_0\}$ is a complete set of orthogonal primitive idempotents in $\LL_0$ and $\LL_1 = \bigoplus\limits_{i,j \in Q_0 }e_i \LL_1 e_j$ as $\LL_0 $-bimodules.
Fix a basis $Q_1^{ij}$ of $e_i \LL_1 e_j$ for each pair $i, j\in Q_0$, take the elements of $Q_1^{ij}$ as arrows from $i$ to $j$, and let $Q_1= \cup_{(i,j)\in Q_0\times Q_0} Q_1^{ ij}.$
Then $\LL \simeq kQ/(\rho)$ as algebras for some relation set $\rho$, and $\LL_t \simeq kQ_t /((\rho)\cap kQ_t) $ as $\LL_0$-bimodules.

The {\em opposite (bound) quiver} $Q^{op} = (Q_0^{op},Q_1^{op},\rho^{op})$ of a bound quiver $Q = (Q_0,Q_1,\rho)$ is obtained from $Q$ by reversing all the arrows.
So $Q_0^{op}= Q_0$, $Q_1^{op} = \{\xa^{op}: j \to i|\xa: i\to j \in Q_1\}$.
If $p=\xa_m\cdots \xa_1$ is a path in $Q$, write $p^{op} = \xa_1^{op}\cdots\xa_m^{op}$ as a path in $Q^{op}$.
Thus $\rho^{op} = \{ \sum_{p_t \in Q_r} a_{t} p^{op}_t| \sum_{p_t \in Q_r} a_{t} p_t\in \rho\}$.
Recall that for an algebra $\LL$, its {\em opposite algebra} $\LL^{op}$ is the algebra defined on $\LL$ with the multiplication defined by $a\circ b = ba$ for any $a,b\in \LL $.
Clearly if $\LL = kQ/ (\rho)$, then $\LL^{op} = kQ^{op}/ (\rho^{op})$, so the opposite algebra is exactly the algebra defined by the opposite bound quiver.

We are mainly interested in  quadratic bound quiver,  that is, $\rho$ can be chosen a set of linear combination of paths of length $2$.
In this case, $\LL$ is called a {\em quadratic algebra}.
For a quadratic bound quiver $Q= (Q_0,Q_1,\rho)$, we identify $Q_0$ and $Q_1$ with their dual bases in the local dual spaces $\bigoplus_{i \in Q_0} D k e_i$ and $\bigoplus_{i,j \in Q_0} De_j kQ_1 e_i$, respectively.
That is, we regard the idempotents $e_i$ as the linear functions on $\bigoplus_{i\in Q_0} k e_i$ such that $e_i(e_j) = \delta_{i,j}$, and for $i,j\in Q_1$, and arrows from $i$ to $j$ are also regarded as the linear functions on $\bigoplus_{i,j\in Q_1}e_j k Q_1 e_i$ such that for any arrows $\xa, \xb$, we have $\xa(\xb) =\delta_{\xa,\xb}$.
By defining $\xa_1 \cdots \xa_r (\xb_1, \cdots, \xb_r)=\xa_1(\xb_1) \cdots \xa_r ( \xb_r) $, $e_j kQ_r e_i$ is identified with its dual space for each $r$ and each pair $i,j$ of vertices, and the set of paths of length $r$ is the dual basis to itself in $e_j kQ_r e_i$.
Take a spanning set $ \rho^{\perp}_{i,j}$ of orthogonal subspace of the set $e_j k\rho e_i$ in the space $e_j kQ_2 e_i$ for each pair $i,j \in Q_0$, and set \eqqc{relationqd}{\rho^{\perp} = \cup_{i,j\in Q_0} \rho^{\perp}_{i,j}.}
The {\em quadratic dual quiver} of $Q$ is defined as the bound quiver $Q^{\perp} =(Q_0,Q_1, \rho^{\perp})$, and the {\em quadratic dual algebra} of $\LL$ is the algebra $\LL^{!,op} \simeq kQ/(\rho^{\perp})$ defined by the bound quiver $Q^{\perp}$ (see \cite{g20}).

\medskip

The $n$-translation quiver and $n$-translation algebra are introduced in \cite{g16}.
The bound quiver of a graded self-injective algebra of Loewy length $n+2$ is exactly  the  stable $n$-translation quiver with the $n$-translation $\tau$ induced by the Nakayama permutation.
Here stable means  that the $n$-translation $\tau$ is defined for all the vertices, which sends each vertex $i$ to the starting vertex $\tau i$ of a bound path of length $n+1$ ending at $i$.
Stable $n$-translation quiver is called stable bound quiver of Loewy length $n+2$ in \cite{g12}.

$\tau$-hammock is introduced in \cite{g12} for a stable $n$-translation quiver.
Let $\olQ= (\olQ_0,\olQ_1, \overline{\rho} )$ be an $n$-translation quiver with $n$-translation $\tau$ and let $\olL = k\olQ/(\olrho)$.
For each non-injective vertex  $i\in \olQ_0$,  $\tau^{-1} i $ is defined in $\olQ$,  the {\em $\tau$-hammock $H_i$ ending at $i$} is defined  as the quiver with the vertex set \eqqcn{hammockvl}{H_{i,0}=\{ (j,t) | j \in \olQ_0, \exists p\in \olQ_t, s(p)=j, t(p)=i,  0\neq p\in \olL \},} the arrow set \eqqcn{hammockal}{\arr{ll}{ H_{i,1} = & \{ (\alpha , t): (j,t+1) \longrightarrow (j', t)|\alpha: j\to j'\in \olQ_1, \\ &\quad  \exists p\in \olQ_t, s(p)=j', t(p)=i, 0\neq p\xa \in \olL_{t+1} \},}} and a hammock function $\mu_i: H_{i,0} \longrightarrow \zZ$ which is integral map on the vertices defined by \eqqcn{hammockfl}{\mu_i (j,t)=\dmn_k e_j \olL_t e_i} for $(j,t)\in H_{i,0}$.

Dually, for non-projective vertex $i$,  $\tau i$ is defined in $\olQ$, the  {\em $\tau$-hammock $H^i$ starting at $i$} is as the quiver with the vertex set \eqqcn{hammockvr}{H^i_0=\{ (j,t) | j \in \olQ_0, \exists p\in \olQ_t,  s(p)=i, t(p)=j, 0\neq p\in \olL \}, } the arrow set \eqqcn{hammockar}{ \arr{ll}{H^i_1= &\{ (\alpha , t): (j,t) \longrightarrow (j', t+1)|\alpha: j\to j'\in \olQ_1, \\ &\quad \exists p\in \olQ_t, s(p)=i, t(p)=j, 0\neq \alpha p\in \olL_{t+1}  \},}} and a hammock functions $\mu^i: H^i_0 \longrightarrow \zZ$ which is the integral maps on the vertices  defined by \eqqcn{hammockfr}{ \mu^i (j,t)=\dmn_k e_i \olL_t e_j} for $(j,t)\in H^i_0$.
It is easy to see that $ (j,t) \to (j, n+1-t) $ defines a bijective map from $H_{i,0}$ to $H^{\tau i}_0$ preserving the arrows.

Observe that when $n=1$, the $1$-translation quiver is the usual translation quiver, and a $\tau$-hammock is a mesh in the quiver.
We may regard the $\tau$-hammock as a generalization of the mesh in the translation quiver.

If the projection to the first component of the vertices is a monomorphism, the hammocks $H^i$ and $H_i$ are regarded as  subquivers of $\olQ$.
In this case, $H_i = H^{\tau i}$ when $\tau i$ is defined and $H^{i} = H_{\tau^{-1} i }$ when $\tau^{-1} i$  is defined.
The $\tau$-hammock $H_i = H^{\tau^{-1} i}$ describes the support of the projective cover of the simple $\olL$-module corresponding to the vertex $\tau^{-1} i$ and the injective envelop of the simple $\olL$-module corresponding to the vertex $i$.

\medskip
Recall that an algebra defined by an $n$-translation quiver is called {\em an  $n$-translation algebra} if it is an $(n+1,q)$-Koszul algebra for some $q\ge 2$ or $q=\infty$ \cite{g16}.
An $n$-translation algebra is quadratic \cite{bbk02}.

Assume that  $\olL$ is an $n$-translation algebra which is $(n+1,q)$-Koszul, and let  $\olG = \olL^{!,op}$ be its quadratic dual.
Let $\bcaG = \add(\olG)$ be the category of finite generated projective $\olG$-modules.
The $\tau$-hammocks are used to describe the  Koszul complexes for $\olG$ in \cite{g16}.
Similar to Proposition 7.4 of \cite{g16}, we have the following characterization of the Koszul complexes in $\bcaG$.

\begin{pro}\label{KoszulinG} For each vertex $i\in \olQ_0$, we have a Koszul complex
\eqqc{nassinsubG}{\xi_i: M_{n+1}=\olG e_{\tau^{-1} i } \slrw{f} \cdots \lrw M_t \lrw \cdots \slrw{\phi} M_0= \olG e_i } with $M_t = \bigoplus_{(j,t)\in H^{i}_{0}}  (\Gamma e_j)^{\mu^i(j,t)} $ for $0\le t\le n$ and a Koszul complex
\eqqc{nassinsubG1}{\zeta_i: M_{n+1}= \olG e_{ i } \slrw{f} \cdots \lrw M_t \lrw \cdots \slrw{\phi} M_0=\olG e_{\tau^{} i} } with $M_t = \bigoplus_{(j,n+1-t)\in H_{i,0}}  (\Gamma e_j)^{\mu_i(j,n+1-t)} $ for $0\le t\le n$.
\end{pro}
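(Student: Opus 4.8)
The plan is to transcribe the construction of Proposition 7.4 of \cite{g16} into the category $\bcaG = \add(\olG)$, reading the two complexes off the hammocks $H^i$ and $H_i$, and then to deduce $\zeta_i$ from $\xi_{\tau i}$ so that only one family of complexes has to be built by hand. The starting observation is that, since $\olL$ is quadratic and $(n+1,q)$-Koszul with quadratic dual $\olG = \olL^{!,op}$, the linear projective $\olG$-resolution of the simple module at a vertex is governed by the graded pieces of $\olL$: in homological degree $t$ the indecomposable projective $\olG e_j$ occurs with multiplicity $\dmn_k e_i \olL_t e_j = \mu^i(j,t)$, and the vertices $j$ for which this is nonzero are exactly the first coordinates of the vertices $(j,t)$ of $H^i$. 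This identifies $M_t$ for $0 \le t \le n$ as claimed.

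Next I would specify the differentials $M_{t+1} \to M_t$. The arrows $(\alpha,t)$ of the hammock $H^i$ prescribe the matrix entries: the summand $\olG e_{j'}$ indexed by $(j',t+1)$ is linked to the summand $\olG e_j$ indexed by $(j,t)$ through the arrow $\alpha: j \to j'$, and the corresponding component of the differential is induced by multiplication by $\alpha$ in $\olG$. That two consecutive differentials compose to zero is exactly the assertion that the quadratic relations $\rho^{\perp}$ defining $\olG$ are orthogonal to the relations $\rho$ of $\olL$, as recorded in \eqref{relationqd}; the contribution of each length-two path to the composite is weighted by this pairing and hence cancels. Exactness in the interior degrees is then precisely the $(n+1,q)$-Koszul property of $\olL$, inherited from the construction of Proposition 7.4 of \cite{g16}.

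It remains to identify the boundary terms and to produce $\zeta_i$. At degree $t = n+1$ the stable $n$-translation structure forces every nonzero path of the maximal length $n+1$ starting at $i$ to terminate at the single vertex $\tau^{-1} i$, with $\dmn_k e_i \olL_{n+1} e_{\tau^{-1} i} = 1$; hence $M_{n+1} = \olG e_{\tau^{-1} i}$, while $M_0 = \olG e_i$ arises from the trivial path. Finally $\zeta_i$ need not be constructed separately: applying the above to the vertex $\tau i$ gives $\xi_{\tau i}: \olG e_i \to \cdots \to \olG e_{\tau i}$, and the arrow-preserving bijection $(j,t) \mapsto (j, n+1-t)$ from $H_{i,0}$ to $H^{\tau i}_0$ recalled above, together with the symmetry $\dmn_k e_j \olL_{n+1-t} e_i = \dmn_k e_{\tau i} \olL_t e_j$, matches the terms and differentials of $\xi_{\tau i}$ with those of $\zeta_i$ one by one, so that $\zeta_i = \xi_{\tau i}$. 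The step I expect to be the main obstacle is the verification that the hammock arrows genuinely assemble into a differential with the correct multiplicities, that is, that the orthogonality of $\rho$ and $\rho^{\perp}$ forces the composite to vanish at every interior degree and not merely generically; the remaining assertions are a direct translation of the $(n+1,q)$-Koszul machinery of \cite{g16}.
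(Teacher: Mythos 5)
Your proposal matches the paper's treatment: the paper gives no separate argument for this proposition, deriving it as "similar to Proposition 7.4 of \cite{g16}" (the terms and multiplicities read off the hammocks via Koszul duality between $\olL$ and $\olG$), and it records the identity $\zeta_i = \xi_{\tau i}$ via the arrow-preserving bijection $(j,t)\mapsto (j,n+1-t)$ exactly as you do. Your reconstruction of the differentials from the hammock arrows and the orthogonality of $\rho$ and $\rho^{\perp}$ is the intended content of that citation, so the approach is essentially the same.
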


We see that $\zeta_i =\xi_{\tau^{} i}$ and $\xi_i =\zeta_{\tau^{-1} i}$.

\medskip

Iyama recently introduces higher almost split sequences, now we recall the related notions \cite{i11}.
Let $\caC$ be a Krull-Schmidt category, and let $J_{\caC}$ be its Jacobson radical.

(a). We call a complex \eqqc{Eqqa}{ M\stackrel{f_0}{ \longrightarrow} C_1 \stackrel{f_1}{\longrightarrow} C_2 \stackrel{f_2}{\longrightarrow} \cdots } a {\em source sequence} of $M$ if the following conditions are satisfied.
(i).  $C_i \in \caC$ and $f_i\in J_{\caC}$ for any $i$,
(ii).  we have the following exact sequence on $\caC$, \eqqc{Eqqb}{ \cdots \stackrel{f_2}{\longrightarrow} \Hom_{\caC}(C_2,-) \stackrel{f_1}{\longrightarrow} \Hom_{\caC}(C_1,-) \stackrel{f_0}{\longrightarrow} J_{\caC}(M,-) \longrightarrow 0 .}

A {\em sink sequence} are defined dually.

(b). We call a complex \eqqcn{}{M \stackrel{}{\longrightarrow} C_1 \stackrel{}{\longrightarrow} C_2 \stackrel{}{\longrightarrow} \cdots \stackrel{}{\longrightarrow} C_{n} \stackrel{}{\longrightarrow} N } an {\em $n$-almost split sequence} if this is a source sequence of $M\in\caC$ and a sink sequence of $N\in\caC$.

\medskip

By Theorem 7.2 of \cite{g16}, the Koszul complexes \eqref{nassinsubG} is an $n$-almost split sequence in $\bcaG$ if $q$ is infinite or $q$ is finite with $\olG_q e_{\tau^{-1} i}=0$  for any $i\in \olQ_0 \setminus \mathcal I$.

Now we consider the $n$-almost split sequence in a subcategory $\bcaG'$ of $\bcaG$.

\begin{lem}\label{nass:objinsubcat}
Let $\bcaG'$ be a full subcategory of $\bcaG$ such that objects appearing in $\xi_i$ of $\eqref{nassinsubG}$ are all in $\caG'$.
Then $\xi_i$ of $\eqref{nassinsubG}$ is an $n$-almost split sequence in $\bcaG'$ if and only if $q$ is infinite or $q$ is finite and   $\hhom_{\bcaG}(X,\olG e_{\tau^{-1}i} )_q=0$ for any $X \in \bcaG'$.
\end{lem}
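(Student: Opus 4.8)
The plan is to unwind the definition of an $n$-almost split sequence into its two halves and then to exploit a degree asymmetry between morphisms into and out of $M_{n+1}=\olG e_{\tau^{-1}i}$. First I would use that $\bcaG'$ is a full subcategory of the Krull--Schmidt category $\bcaG$ and that every term $M_t$ of $\xi_i$ lies in $\bcaG'$, so that the morphism spaces and the Jacobson radicals between objects of $\bcaG'$ are computed in $\bcaG$. Consequently, to say that $\xi_i$ is $n$-almost split in $\bcaG'$ is exactly to say that two complexes are exact: the sink-sequence complex $\Hom(X,M_{n+1})\to\cdots\to\Hom(X,M_1)\to J_{\bcaG}(X,M_0)\to 0$ for every $X\in\bcaG'$, and the source-sequence complex $\Hom(M_0,Y)\to\cdots\to\Hom(M_n,Y)\to J_{\bcaG}(M_{n+1},Y)\to 0$ for every $Y\in\bcaG'$.

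Next I would record the homology of $\xi_i$ regarded as a complex of $\olG$-modules: $H_0(\xi_i)$ is the simple top $S_i$ of $M_0$, the middle homologies $H_1,\dots,H_n$ vanish, and $H_{n+1}(\xi_i)=\Ker f$. By the computation underlying Theorem 7.2 of \cite{g16}, $\Ker f=0$ when $q=\infty$, while for finite $q$ it is the top-degree component $\olG_q e_{\tau^{-1}i}$, a semisimple module concentrated in internal degree $q$; the point is that $f$ raises internal degree by one whereas $\olG$, being the quadratic dual of the $(n+1,q)$-Koszul algebra $\olL$, has top nonzero degree $q$, so $f$ annihilates $\olG_q e_{\tau^{-1}i}$.

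For the sink sequence I would use that each $X\in\bcaG'$ is projective, so $\Hom(X,-)$ is exact and the homology of $\Hom(X,\xi_i)$ is $\Hom(X,H_\bullet(\xi_i))$. The bottom contributes $\Hom(X,S_i)=\Hom(X,M_0)/J_{\bcaG}(X,M_0)$, so replacing $\Hom(X,M_0)$ by the radical cancels it exactly; the middle is exact; and exactness at the top reduces to $\Hom(X,\Ker f)=0$. Since the morphisms $X\to\olG e_{\tau^{-1}i}$ with image in $\Ker f=\olG_q e_{\tau^{-1}i}$ are precisely the homogeneous ones of degree $q$, I get $\Hom(X,\Ker f)\cong\hhom_{\bcaG}(X,\olG e_{\tau^{-1}i})_q$. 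Hence the sink sequence is exact in $\bcaG'$ iff $q=\infty$ or $\hhom_{\bcaG}(X,\olG e_{\tau^{-1}i})_q=0$ for all $X\in\bcaG'$; this already gives the forward implication and the sink half of the converse.

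The remaining point, and the one I expect to be the main obstacle, is that the source sequence of $M_{n+1}$ holds in $\bcaG'$ unconditionally, even in the finite-$q$ case where $\Ker f=\olG_q e_{\tau^{-1}i}\neq 0$; here $\Hom(-,Y)$ is not exact, since $\olG$ is not self-injective, so the clean argument above is unavailable. I would isolate the only jump-sensitive obstruction, namely whether a radical morphism $g\colon M_{n+1}\to Y$ factors through $f$, and note that $g$ automatically vanishes on $\Ker f$: as $\Ker f$ lies in the top degree $q$ and $g$ raises degree by at least one, $g(\Ker f)\subseteq Y_{\ge q+1}=0$ for every $Y\in\bcaG$. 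The surviving exactness statements involve only the syzygies $\Omega^tS_i$ for $t\le n$ together with the relevant $\Ext$-vanishing, which are insensitive to the degree-$q$ jump and constitute exactly the source-sequence content established in the proof of Theorem 7.2 of \cite{g16}; they therefore hold for all $Y\in\bcaG$ and restrict to $\bcaG'$. Combining the two halves yields the converse and completes the proof.
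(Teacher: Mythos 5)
Your proposal is correct and follows essentially the same route as the paper: the paper likewise reduces the question (via Lemma 7.1 of \cite{g16}) to exactness of $\xi_i$ at $\olG e_{\tau^{-1}i}$, identifies $\Ker f$ with $\olG_q e_{\tau^{-1}i}$ using the $(q,n+1)$-Koszul property of $\olG$, and translates the vanishing of $\hhom_{\bcaG}(X,\Ker f)$ into the degree-$q$ condition $\hhom_{\bcaG}(X,\olG e_{\tau^{-1}i})_q=0$. Your version merely unpacks in more detail what that citation contains (the split into sink/source halves, the use of projectivity of $X$ to commute $\Hom(X,-)$ with homology, and the degree argument for the source half), so it is a faithful, slightly more explicit rendering of the paper's argument rather than a different one.
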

\begin{proof}
By Lemma 7.1 in \cite{g16}, $\xi_i$ of $\eqref{nassinsubG}$ is an $n$-almost split sequence if and only if it is exact at $\olG e_{\tau^{-1}i}$.
Since $\olL$ is $n$-translation algebra, so it is $(n+1,q)$-Koszul algebra, and $\olG $ is $(q,n+1)$-Koszul algebra.
Thus $\ker f$ in $\eqref{nassinsubG}$ is exactly $\olG_q e_{\tau^{-1}i}$.
So $\hhom_{\bcaG}(\olG e_j,\olG e_{\tau^{-1}i} )_{q}=0$ for all $j\in \olQ_0$, if and only if $\olG_q e_{\tau^{-1}i}=0$ and $\xi_i$ of $\eqref{nassinsubG}$ is exact at $\olG e_{\tau^{-1}i}$.
\end{proof}

We can restate Lemma \ref{nass:objinsubcat} using bound paths.
\begin{cor}\label{nass:subcat}
Under the assumption of Lemma \ref{nass:objinsubcat}, then $\xi_i$ of $\eqref{nassinsubG}$ is an $n$-almost split sequence in $\bcaG'$ if and only if  $q$ is infinite or $q$ is finite and for any $j\in \olQ_0$, $\olG e_{j}$ is not an object of $\bcaG'$ whenever there is a bound path of length $q$ from $\tau^{-1} i$ to $j$ in $\olQ$.
\end{cor}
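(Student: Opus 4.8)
The plan is to derive the corollary directly from Lemma \ref{nass:objinsubcat} by rewriting its graded Hom-vanishing condition as a statement about bound paths. When $q$ is finite, the lemma says that $\xi_i$ of \eqref{nassinsubG} is $n$-almost split in $\bcaG'$ if and only if $\hhom_{\bcaG}(X,\olG e_{\tau^{-1}i})_q = 0$ for every object $X$ of $\bcaG'$. The first step is to reduce this test to the indecomposable objects: since $\bcaG=\add(\olG)$ is Krull--Schmidt, every object of $\bcaG'$ is a finite direct sum of indecomposable projectives $\olG e_j$, and the functor $\hhom_{\bcaG}(-,\olG e_{\tau^{-1}i})_q$ is additive in its first argument (carrying direct sums to products). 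Provided $\bcaG'$ is the additive hull of the indecomposables it contains, the vanishing for all $X\in\bcaG'$ is then equivalent to $\hhom_{\bcaG}(\olG e_j,\olG e_{\tau^{-1}i})_q = 0$ for every vertex $j$ with $\olG e_j$ an object of $\bcaG'$.

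The second step is the standard identification $\hhom_{\bcaG}(\olG e_j,\olG e_{\tau^{-1}i}) \iso e_j \olG e_{\tau^{-1}i}$ of graded spaces, under which the degree-$q$ component is $e_j \olG_q e_{\tau^{-1}i}$. Because $\olG$ is generated in degrees $0$ and $1$, the component $\olG_q$ is spanned by the classes of paths of length $q$, and $e_j \olG_q e_{\tau^{-1}i}$ is spanned precisely by the classes of those length-$q$ paths that start at $\tau^{-1}i$ and end at $j$, a class being nonzero exactly when the path is bound (nonzero in $\olG$) in $\olQ$. Hence $\hhom_{\bcaG}(\olG e_j,\olG e_{\tau^{-1}i})_q = 0$ if and only if there is no bound path of length $q$ from $\tau^{-1}i$ to $j$.

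Finally, I would pass to the contrapositive vertex by vertex: the conjunction over all $j$ of the implication ``$\olG e_j\in\bcaG'$ forces no such bound path'' is logically identical to ``whenever there is a bound path of length $q$ from $\tau^{-1}i$ to $j$, the module $\olG e_j$ is not an object of $\bcaG'$'', which is exactly the formulation in the statement. Combining this equivalence with Lemma \ref{nass:objinsubcat} yields the corollary.

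I do not anticipate a serious obstacle, since the corollary is a reformulation; the two points that genuinely require care are the reduction to indecomposable summands and the path interpretation of the degree-$q$ Hom space. For the former, one must ensure that testing ``for all objects $X$'' legitimately reduces to a condition indexed by the vertices $j$ with $\olG e_j\in\bcaG'$; this is clean when $\bcaG'$ is closed under direct summands, and I would either assume this (as is natural in the bound-path-category setting of Sections \ref{sec:bpc}--\ref{sec:ttn}) or argue directly from additivity. For the latter, one must match the internal degree $q$ with path length $q$ and keep the orientation (from $\tau^{-1}i$ to $j$) consistent with the conventions fixed in Section \ref{sec:pre}.
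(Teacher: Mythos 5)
Your proposal is correct and follows essentially the same route as the paper, which treats the corollary as an immediate restatement of Lemma \ref{nass:objinsubcat} obtained by reducing the Hom-vanishing test to the indecomposables $\olG e_j$ and identifying $\hhom_{\bcaG}(\olG e_j,\olG e_{\tau^{-1}i})_q$ with the span of the bound paths of length $q$ between $\tau^{-1}i$ and $j$. The only point worth flagging is that ``bound'' here must mean nonzero in $\olG=k\olQ/(\olrho^{\perp})$ (i.e.\ bound in $\olQ^{\perp}$), as you correctly use, even though the statement writes ``in $\olQ$''.
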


We call a convex full subquiver $\olQ'$ of a bound quiver $\olQ$ a truncation of $\olQ$.
Obviously, a truncation $\olQ'$ of an $n$-translation quiver is also an $n$-translation quiver.
A truncation $\olQ'$ of $\olQ$ is called {\em $\tau$-mature} if  it satisfies the condition in Corollary \ref{nass:subcat}.
As a consequence of Theorem 7.2 of \cite{g16}, we have the following Proposition.

\begin{pro}\label{taumat}
If $\olQ'$ is a $\tau$-mature truncation  of $\olQ$.
Let $\olL'$ be the algebra defined by $\olQ'$ and let $\olG'= \olL^{!,op}$ be its quadratic dual.
Then $\add \olG'$ has $n$-almost split sequences, that is, if both $i$ and $\tau^{-1} i$ are in $\olQ'$, then $\xi_i$ of  \eqref{nassinsubG}  is an $n$-almost split sequence in $\add \olG'$.
\end{pro}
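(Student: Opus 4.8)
The plan is to reduce the statement to Lemma~\ref{nass:objinsubcat}, equivalently to its reformulation Corollary~\ref{nass:subcat}, whose hypotheses are precisely tailored to this situation. Two preliminaries must be arranged before that lemma can be invoked: first, that $\add\olG'$ may be regarded as a full subcategory $\bcaG'$ of $\bcaG=\add\olG$ in a way compatible with the complexes $\xi_i$; and second, that every object occurring in $\xi_i$ of \eqref{nassinsubG} actually lies in $\add\olG'$. Once both are established, the defining property of a $\tau$-mature truncation is, word for word, the condition appearing in Corollary~\ref{nass:subcat}, and the conclusion follows immediately.

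First I would set up the embedding $\add\olG'\hookrightarrow\bcaG$. Since $\olQ'$ is a convex full subquiver, any path of $\olQ$ both of whose endpoints lie in $\olQ'_0$ is entirely contained in $\olQ'$. In particular, for $a,b\in\olQ'_0$ the space of length-$2$ paths from $a$ to $b$ is the same whether computed in $\olQ'$ or in $\olQ$, and likewise $e_b k\olrho' e_a=e_b k\olrho e_a$; taking orthogonal complements shows that the quadratic-dual relations agree, $(\olrho')^{\perp}_{a,b}=\olrho^{\perp}_{a,b}$. Consequently the bound paths of $\olQ'^{\perp}$ between vertices of $\olQ'_0$ coincide with those of $\olQ^{\perp}$, whence $\Hom_{\olG'}(\olG' e_j,\olG' e_{j'})\iso\Hom_{\olG}(\olG e_j,\olG e_{j'})$ for all $j,j'\in\olQ'_0$. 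This identifies $\add\olG'$ with the full subcategory $\bcaG'$ of $\bcaG$ whose objects are the direct sums of the $\olG e_j$ with $j\in\olQ'_0$, and under this identification the complex $\xi_i$ formed over $\olG'$ is literally the complex $\xi_i$ formed over $\olG$, since its differentials are given by bound paths that are matched by the isomorphism above.

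Next I would verify the membership hypothesis of Lemma~\ref{nass:objinsubcat}, namely that each $\olG e_j$ appearing in $\xi_i$ lies in $\bcaG'$, i.e.\ that every $j$ with $(j,t)\in H^i_0$ satisfies $j\in\olQ'_0$. For this I use the structure of the $\tau$-hammock $H^i$ recalled in Section~\ref{sec:pre}: its source is $i$ at level $0$, its sink is $\tau^{-1}i$ at level $n+1$, and the hammock arrows assemble each vertex $(j,t)$ into a nonzero bound path of length $n+1$ from $i$ to $\tau^{-1}i$. Because both $i$ and $\tau^{-1}i$ lie in $\olQ'_0$ by hypothesis, convexity of $\olQ'$ forces every intermediate vertex $j$ on such a path into $\olQ'_0$. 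Hence all objects of $\xi_i$ belong to $\bcaG'$.

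With both preliminaries in hand, Lemma~\ref{nass:objinsubcat} and Corollary~\ref{nass:subcat} apply to $\bcaG'=\add\olG'$, and a $\tau$-mature truncation is by definition one for which the bound-path condition of Corollary~\ref{nass:subcat} holds. Therefore $\xi_i$ is an $n$-almost split sequence in $\add\olG'$, as claimed. The step I expect to be the main obstacle is the containment of the hammock $H^i$ inside $\olQ'_0$: one must argue carefully, from the stable $n$-translation / $(n+1,q)$-Koszul structure of $\olL$, that no hammock vertex can lie off the bound paths from $i$ to $\tau^{-1}i$, for only then may convexity be invoked. The compatibility of quadratic duality with convex truncation, by contrast, is routine once the agreement of the length-$2$ path spaces and relations above is observed.
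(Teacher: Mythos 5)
Your proposal is correct and follows essentially the same route as the paper: the paper states Proposition \ref{taumat} as an immediate consequence of Theorem 7.2 of \cite{g16} together with the definition of $\tau$-mature (which is exactly the condition of Corollary \ref{nass:subcat}) and the observation that convexity places all objects of $\xi_i$ in $\add\olG'$. You merely make explicit the two details the paper leaves implicit — the compatibility of quadratic duality with convex truncation and the containment of the hammock $H^i$ in $\olQ'_0$ (via extension of nonzero bound paths to maximal ones in the graded self-injective algebra $\olL$) — both of which are argued correctly.
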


So if $\olL$ is Koszul, then $q=\infty$, and  every convex subquiver $\olQ'$ of $\olQ$ is $\tau$-mature.
If $q < \infty$, then $\olG$ is a $(q, n+1)$-Koszul, thus it is a $(q-1)$-translation quiver with $(q-1)$-translation $\tau_{\perp}$.
Any maximal bound path in $\olQ^{\perp}$ has length $q$ and are from $\tau_{\perp} i$ to $i$.
In this case, $\olQ'$ is $\tau$-mature if for any $i\in \olQ'_0$, either $\tau i$ or $\tau^{-1}_{\perp} i$ is not in $\olQ'_0$

\medskip

We say that a subcategory $\bcaG'$ has {\em $n$-almost split sequences} if $\xi_i$ of $\eqref{nassinsubG}$ is an $n$-almost split sequence in $\bcaG'$ whenever $\olG e_i$ and $\olG e_{\tau^{-1} i}$ are both in $\bcaG'$.
If $\olQ'$  is a full subquiver of $\olQ$, let $\bcaG'= \add\{\olG e_j| j\in Q_0'\}$ be a full subcategory of $\bcaG$, and call it the {\em subcategory defined by $\olQ'$}.
If $\olQ'$ is convex, then  objects in $\eqref{nassinsubG}$ are all in $\bcaG'$ whenever $i$ and $\tau^{-1} i$ are both in $\olQ'_0$.
By Corollary \ref{nass:subcat}, we have the following proposition.

\begin{pro}\label{GGnass}
Let $\olG'$ be the quadratic dual algebra of a bound quiver algebra $\olL'$ defined by a $\tau$-mature subquiver $\olQ'$ of the stable $n$-translation quiver $\olQ$, then $\add ( \olG')$ has $n$-almost split sequences.
\end{pro}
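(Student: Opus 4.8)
The plan is to reduce the statement directly to Corollary \ref{nass:subcat}, whose hypotheses are tailored to exactly this situation. By the definition just recalled, saying that $\bcaG' = \add(\olG')$ has $n$-almost split sequences means that for every vertex $i$ with both $\olG e_i$ and $\olG e_{\tau^{-1} i}$ in $\bcaG'$ — equivalently, with $i, \tau^{-1} i \in \olQ'_0$ — the Koszul complex $\xi_i$ of \eqref{nassinsubG} is an $n$-almost split sequence in $\bcaG'$. So I would fix such an $i$ and verify the two inputs that Corollary \ref{nass:subcat} requires: first, that all objects appearing in $\xi_i$ lie in $\bcaG'$ (the standing hypothesis of Lemma \ref{nass:objinsubcat}); second, that the combinatorial $\tau$-mature condition holds for this $i$.

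First I would establish that every object $\olG e_j$ appearing in $\xi_i$ belongs to $\bcaG'$. These objects are indexed by the vertices $(j,t)$ of the $\tau$-hammock $H^i$ for $0 \le t \le n+1$, with endpoints $i$ at level $0$ and $\tau^{-1} i$ at level $n+1$. By the structure of a $\tau$-hammock in a stable $n$-translation quiver, each such $j$ lies on a bound path from $i$ to $\tau^{-1} i$. Since $\olQ'$ is $\tau$-mature it is in particular a truncation, hence a convex full subquiver; convexity together with $i, \tau^{-1} i \in \olQ'_0$ then forces $j \in \olQ'_0$, so $\olG e_j \in \bcaG'$. This is precisely the remark recorded just before the statement, and it places us squarely under the hypothesis of Lemma \ref{nass:objinsubcat} and Corollary \ref{nass:subcat}.

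With that hypothesis in force, Corollary \ref{nass:subcat} tells us that $\xi_i$ is an $n$-almost split sequence in $\bcaG'$ if and only if either $q$ is infinite, or $q$ is finite and no $\olG e_j$ with a bound path of length $q$ from $\tau^{-1} i$ to $j$ lies in $\bcaG'$. But this is exactly the condition that $\olQ'$ is assumed to satisfy by being $\tau$-mature. Applying Corollary \ref{nass:subcat} therefore yields that $\xi_i$ is $n$-almost split in $\bcaG'$. Since $i$ was an arbitrary vertex with $i, \tau^{-1} i \in \olQ'_0$, the category $\add(\olG') = \bcaG'$ has $n$-almost split sequences by definition.

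The only step requiring genuine care is the first — verifying that convexity of $\olQ'$ guarantees that every intermediate vertex of the hammock $H^i$ lands in $\olQ'_0$. This rests on the fact that in a stable $n$-translation quiver every vertex of $H^i$ sits on a bound path from $i$ to $\tau^{-1} i$, which is what licenses the appeal to convexity. Once this is granted, the remainder is a direct translation through the definitions and a single application of Corollary \ref{nass:subcat}.
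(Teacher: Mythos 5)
Your proposal is correct and follows essentially the same route as the paper: the paper's own justification is precisely the observation that convexity of $\olQ'$ (built into $\tau$-maturity via the definition of truncation) places all terms of $\xi_i$ in $\bcaG'$ whenever $i,\tau^{-1}i\in\olQ'_0$, followed by a direct appeal to Corollary \ref{nass:subcat}, whose hypothesis is the defining condition of $\tau$-maturity. Your extra care in checking that every hammock vertex lies on a bound path from $i$ to $\tau^{-1}i$ (so that convexity applies) is exactly the point the paper leaves implicit.
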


\medskip

Given a bound quiver $\olQ=(\olQ_0,\olQ_1, \rho)$, we can relate to it the path category and bound path category.
The {\em path category $\bfP(\olQ)$} of the quiver  $\olQ=(\olQ_0,\olQ_1)$ is defined as  the $k$-additive category with objects $\add (\olQ_0)$, and the morphism space $\Hom_{\bfP(\olQ)}(i,j)$ the vector space with the basis the paths from $i$ to $j$ for $i,j \in Q_0$.
So the indecomposable objects are the vertices and we have \eqqcn{homspace}{\Hom_{\bfP(\olQ)}(i,j) = e_j k\olQ e_i  = e_i k\olQ^{op} e_j =\Hom_{k\olQ^{op}}(k\olQ^{op}e_i,k\olQ^{op} e_j) } as vector spaces.
If $\olQ$ is locally finite and acyclic, the path category of $\olQ$ coincides with its complete path category.
The {\em bound path category} $\caP(\olQ) = \bfP(\olQ)/\caI$ of the bound quiver $\olQ=(\olQ_0,\olQ_1, \rho)$  is the quotient category of the path category modulo the ideal $\caI$ generated by the relations in $\rho$ (see  Section 6 of \cite{i11}).
Let $\olL$ be the algebra defined by the bound quiver $\olQ$, then $\olL^{op}$ is the algebra defined by the bound quiver  $\olQ^{op}$.
Thus the objects of $\caP(\olQ) $ are the same as those of $\bfP(\olQ)$, $i \to \olL^{op} e_i$ define a bijective correspondence from the objects in $\caP(\olQ) $ and the objects in $\add \olL^{op}$  and we have \eqqcn{homcatalg}{ \Hom_{\caP(\olQ)} (i, j) = e_i k\olQ e_j/ e_i (\rho) e_j \simeq \Hom_{\olL^{op}}(\olL e_i, \olL e_j)} as vector spaces.
So we have the following consequence.

\begin{pro}\label{pathalgcat}
Assume that $\olQ=(\olQ_0,\olQ_1,\rho)$ is an acyclic locally finite bound quiver.
Let $\olL$ and $\caP(\olQ)$ be the bound path algebra and bound path category of $\olQ$, respectively.
Then we have an equivalence of categories $ \caP(\olQ)\simeq\add (\olL^{op}) $, where $\add (\olL^{op})$ is the category of finitely generated left projective $\olL^{op}$-modules.
\end{pro}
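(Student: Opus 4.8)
The plan is to construct an explicit covariant additive functor $F\colon\caP(\olQ)\to\add(\olL^{op})$ and to prove it is fully faithful and dense. On objects I set $F(i)=\olL^{op}e_i$ for a vertex $i\in\olQ_0$ and extend additively to the formal direct sums that constitute the objects of $\caP(\olQ)$. On morphisms I use the reversal $p\mapsto p^{op}$: a path $p$ from $i$ to $j$ in $\olQ$ gives $p^{op}\in e_i k\olQ^{op}e_j$, and I let $F$ send the class of $p$ to right multiplication by the class of $p^{op}$, which is a left $\olL^{op}$-module homomorphism $\olL^{op}e_i\to\olL^{op}e_j$. The first task is to check that $F$ is a well-defined functor. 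Covariance is exactly the place where the two appearances of $(-)^{op}$ cancel: since $(qp)^{op}=p^{op}q^{op}$ and composing right multiplications reverses their order again, one obtains $F(q)\circ F(p)=F(qp)$. Because $\olL^{op}=k\olQ^{op}/(\rho^{op})$, every relation $r\in\rho$ has $r^{op}\in(\rho^{op})$, so $F(r)=0$; thus $F$ annihilates the ideal $\caI$ and descends from $\bfP(\olQ)$ to $\caP(\olQ)$.

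Next I would verify that $F$ is dense. By definition the objects of $\add(\olL^{op})$ are the finitely generated projective left $\olL^{op}$-modules, and since $\olL^{op}$ is a basic bound-quiver algebra, its indecomposable projectives are precisely the $\olL^{op}e_i$, pairwise non-isomorphic, with every finitely generated projective a finite direct sum of them. As $F$ is additive and carries the vertices onto the $\olL^{op}e_i$, every object of $\add(\olL^{op})$ lies in its essential image.

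The main step is full faithfulness, for which it suffices to treat the indecomposables and then pass to finite direct sums by the usual matrix-of-morphisms argument. For vertices $i,j$ I would factor the map induced by $F$ through the classical isomorphism $\Hom_{\olL^{op}}(\olL^{op}e_i,\olL^{op}e_j)\iso e_i\olL^{op}e_j$, $f\mapsto f(e_i)$, and then identify $e_i\olL^{op}e_j\iso(e_ik\olQ^{op}e_j)/(e_i(\rho^{op})e_j)$ with $\Hom_{\caP(\olQ)}(i,j)$ by the reversal bijection, which matches paths with paths and the spanning relations $\rho$ with $\rho^{op}$ term by term. This is exactly the identification $\Hom_{\caP(\olQ)}(i,j)\iso\Hom_{\olL^{op}}(\olL^{op}e_i,\olL^{op}e_j)$ recorded in the discussion preceding the statement, and tracing through the definitions shows it coincides with the map induced by $F$; hence $F$ is bijective on each morphism space. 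Local finiteness and acyclicity guarantee that these spaces are finite-dimensional and that $\caP(\olQ)$ agrees with its completion, so no convergence issue arises. Being fully faithful and dense, $F$ is an equivalence.

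The main obstacle I anticipate is bookkeeping rather than genuine difficulty: one must keep the left/right module conventions, the direction of path composition, and the two uses of $(-)^{op}$ (once on the quiver, once on the algebra) mutually consistent, so that $F$ comes out covariant and the relation spaces are paired correctly; this is precisely what forces the target to be $\olL^{op}$ rather than $\olL$. A secondary point requiring care is the case of infinite $\olQ_0$, where $\olL^{op}$ is a non-unital algebra with a complete set of local idempotents and ``finitely generated projective'' must be read as a direct summand of a finite sum of the $\olL^{op}e_i$; the argument above is unaffected, since it is carried out one indecomposable at a time.
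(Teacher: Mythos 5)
Your construction is correct and is essentially the paper's own argument: the paper gives no separate proof, simply recording before the statement that $i\mapsto \olL^{op}e_i$ is a bijection on indecomposables and that $\Hom_{\caP(\olQ)}(i,j)\simeq \Hom_{\olL^{op}}(\olL^{op}e_i,\olL^{op}e_j)$ via the path-reversal identification, and declaring the equivalence a consequence. Your write-up just carries out that same identification functorially, with the covariance and relation-matching checks made explicit.
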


Clearly,  $\add (\olL)$ is equivalent to $\caP(\olQ^{op})$.

\medskip

Assume that $\caC$ is a Hom-finite Krull-Schmidt category with $n$-almost split sequences.
Similar to Definition 6.1 of \cite{i11}, one defines {\em the Auslander-Reiten quiver} of $\caC$  as the quiver with the vertices the iso-classes of indecomposable objects and the number of arrows from $x$ to $y$ is \eqqc{noarrows}{d_{xy} = \dmn_k J_{\caC}(x,y) / J^2_{\caC}(x,y), } with the translation induced by the $n$-almost split sequences.

In fact, the arrows are determined by source morphisms and sink morphisms (see Lemma 6.1 of \cite{i11} and Proposition 3.1 of \cite{birsm11}).

\begin{lem}\label{category:arrows}

If $M \to Y$ is a sink map in $\caC$, then $d_{X,Y}$ is equal to the number of $X$ appearing in the direct sum decomposition of $M$.

If $X \to M$ is a source map in $\caC$, then $d_{X,Y}$ is equal to the number of $Y$ appearing in the direct sum decomposition of $M$.
\end{lem}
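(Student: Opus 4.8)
The plan is to prove the statement for a sink map in full and to obtain the source-map statement by passing to the opposite category $\caC^{\mathrm{op}}$ (spelled out at the end). So fix a sink map $g\colon M\to Y$ with $Y$ indecomposable, namely the last morphism of a sink sequence, and decompose $M\iso X^{s}\oplus M'$ into indecomposables, where $M'$ has no direct summand isomorphic to the fixed indecomposable $X$ and $s$ is the multiplicity of $X$ in $M$; write $g=[g_1,\dots,g_s,g']$ with $g_j\colon X\to Y$ and $g'\colon M'\to Y$. The defining conditions of a sink sequence put every structure morphism in $J_{\caC}$; in particular $g\in J_{\caC}$, so each $g_j$ and $g'$ is radical, and the radicality of the morphism preceding $g$ makes $g$ right minimal, i.e.\ no nonzero direct summand of $M$ is annihilated by $g$. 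The (contravariant) exactness dual to \eqref{Eqqb} at $J_{\caC}(-,Y)$ says $g$ is right almost split, so evaluating at $X$ gives a surjection $g_*\colon\Hom_{\caC}(X,M)\twoheadrightarrow J_{\caC}(X,Y)$.

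First I would show that the classes $[g_1],\dots,[g_s]$ span $J_{\caC}(X,Y)/J^2_{\caC}(X,Y)$ over $k$. For $h\in J_{\caC}(X,Y)$, right almost splitness gives $\phi=(\phi_1,\dots,\phi_s,\phi')\colon X\to X^{s}\oplus M'$ with $h=g\phi=\sum_j g_j\phi_j+g'\phi'$. Since $\End_{\caC}(X)/J_{\caC}(X,X)\iso k$ in the categories under consideration, write $\phi_j=\lambda_j\id+r_j$ with $\lambda_j\in k$ and $r_j\in J_{\caC}(X,X)$; and $\phi'$ is radical because $M'$ has no summand isomorphic to $X$. As each $g_j$ and $g'$ is radical, every $g_jr_j$ and $g'\phi'$ is a composite of two radical maps, hence lies in $J^2_{\caC}(X,Y)$, and therefore $h\equiv\sum_j\lambda_j g_j\pmod{J^2_{\caC}(X,Y)}$, which is the required spanning.

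The main obstacle is the reverse inequality, the linear independence of $[g_1],\dots,[g_s]$, and this is precisely where right minimality is used. Suppose $\sum_j\lambda_j g_j\in J^2_{\caC}(X,Y)$. Expressing this element as a sum of composites of radical maps and factoring each radical map into $Y$ through $g$ (again by right almost splitness) produces $\nu\in J_{\caC}(X,M)$ with $g\nu=\sum_j\lambda_j g_j$. Comparing with $\iota\colon X\to X^{s}\oplus M'$ whose components are $\lambda_j\id$ into the copies of $X$ and $0$ into $M'$, so that $g\iota=\sum_j\lambda_j g_j$, we get $g(\iota-\nu)=0$. If some $\lambda_j\neq0$, then the $j$-th component of $\iota-\nu$ is $\lambda_j\id$ minus a radical endomorphism of $X$, hence an automorphism; so $\iota-\nu$ is a split monomorphism whose image is a direct summand of $M$ isomorphic to $X$ and annihilated by $g$, contradicting right minimality. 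Hence all $\lambda_j=0$, the classes are independent, and $d_{X,Y}=\dmn_k J_{\caC}(X,Y)/J^2_{\caC}(X,Y)=s$ by \eqref{noarrows}, the multiplicity of $X$ in $M$. Finally, applying this sink case in $\caC^{\mathrm{op}}$ — where the source sequence of $X$ becomes a sink sequence ending at $X$, the source map $X\to M$ becomes a sink map $M\to X$, and $d_{X,Y}=\dmn_k J_{\caC}(X,Y)/J^2_{\caC}(X,Y)$ is unaffected — yields that the multiplicity of $Y$ in $M$ equals $d_{X,Y}$, which is the source-map statement.
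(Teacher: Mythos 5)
Your argument is correct: the spanning step via factorization through the right almost split map $g$ and reduction modulo $J^2_{\caC}$, the independence step via right minimality (which you correctly deduce from the radicality of the preceding term of the sink sequence, since an idempotent factoring through a radical map must vanish), and the passage to $\caC^{\mathrm{op}}$ for the source case are all sound, using only the defining exactness of the sink sequence and $\End_{\caC}(X)/J_{\caC}(X,X)\iso k$, which holds in the paper's setting. The paper itself gives no proof of this lemma, merely citing Lemma 6.1 of \cite{i11} and Proposition 3.1 of \cite{birsm11}; your proof is essentially the standard argument contained in those references, so there is nothing to compare beyond noting that you have filled in the details the paper delegates to the literature.
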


Assume $\phi: \bfP(\olQ) \to \caC$  assigns a vertex $i$ in $\olQ_0$ to an indecomposable object $\phi(i)$ in $\caC$, and for each pair $i,j$ in $\olQ_0$, assigns a morphism $\phi(a) \in J_{\caC} (\phi(i), \phi(j) )$ for each arrow $a : i \to j$ in $\olQ$, and that the image of the set $\{\phi(a) | s(a) = i, t(a) = j \}$ is a $k$-basis of $J_{\caC} (\phi(i), \phi(j)) /J^2_{\caC} (\phi(i), \phi(j) )$ for any $i, j \in \olQ_0$.
Then $\phi$ extends uniquely to a full dense functor \eqqcn{func}{\phi : \bfP(\olQ) \to  \caC.}
If $\olQ$ is acyclic, we have that \eqqcn{completealg}{e_i k\olQ e_j = e_i \widehat{k\olQ} e_j}  for any vertices $i, j \in \olQ_0$, for the $(\olQ_1)$-adic completion $\widehat{k\olQ}$ of $k\olQ$.
Thus \eqqcn{complete}{e_i k\olQ/(\rho) e_j = e_i \widehat{k\olQ}/\overline{(\rho)} e_j.}
Similar to Lemma 6.4 of \cite{i11} and Proposition 3.6 of \cite{birsm11}, we can determine the relation set, that is, a set of the generators of $\Ker \phi$, using $n$-almost split sequences.
In fact, the relations are determined by the first two maps in the source sequences or by the last two maps in the sink sequences.

\begin{lem}\label{category:relation}
Let $\caC$  be a Hom-finite Krull-Schmidt $k$-category.
Let $\olQ$ be an acyclic locally finite quiver, and let $\bfP(\olQ)$ be the path category of $\olQ$ over $k$.
Assume that we have a fully dense functor $\phi: \bfP(\olQ) \to \caC$.
\begin{enumerate}
\item If any $i\in \olQ_0$ has the source sequence with the first two terms
    \eqqc{soueceseq}{
    \phi(i)  \slrw{(\alpha)_{\alpha\in \olQ_1,s(\alpha)=i}} \bigoplus_{\alpha\in \olQ_1,s(\alpha)=x} \phi(t(\alpha)) \slrw{(\gamma_{\alpha,h})_{1\le h \le m_i}} \bigoplus_{1\le h\le m_i} \phi(t(\gamma_{\alpha,h})),
    }
    then the kernel of $\phi$ is generated by $$\{ \sum_{ \alpha\in \olQ_1, s(\alpha)=i} \gamma_{\alpha,h} \alpha | i\in \olQ_0, 1 \le  h \le  m_i\}.$$

\item If any $j\in \olQ_0$ has the sink sequence with the last two terms
    \eqqc{sinkseq}{
    \bigoplus_{1\le j\le m'_j} \phi(s(\gamma'_{h,\alpha})) \slrw{ (\gamma'_{h,\alpha} )_{ 1\le h \le m'_j}} \bigoplus_{\alpha\in \olQ_1,t(\alpha)=j} \phi(s(\alpha))  \slrw{(\alpha)_{\alpha\in \olQ_1,t(\alpha)=j}} \phi(j),
    }
    then the kernel of $\phi$ is generated by $$\{ \sum_{ \alpha\in \olQ_1, t(\alpha)=j} \alpha \gamma'_{h,\alpha}  | j\in \olQ_0, 1 \le  h \le  m'_j\}.$$
\end{enumerate}
\end{lem}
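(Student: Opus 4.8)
The plan is to transport the problem into the setting of Lemma \ref{category:relation} in \cite{i11} and \cite{birsm11}, which already establishes the analogous statement for an abstract fully dense functor, and then to reap the two assertions as the two parts of a single computation carried out in the source sequence and, dually, in the sink sequence. First I would fix the convention that $\bfP(\olQ)$ is the path category of the acyclic locally finite quiver $\olQ$, so that by acyclicity we have $e_i k\olQ e_j = e_i \widehat{k\olQ} e_j$ for all vertices, and hence $\Ker\phi$ is an ideal of $\bfP(\olQ)$ that we may describe by a set of relations, each of which is a linear combination of paths sharing a common source and a common target. The key structural input is that $\phi$ is \emph{fully dense}: fullness forces every morphism in $\caC$ to be hit, and density forces the indecomposables of $\caC$ to be exactly the images $\phi(i)$.

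Next I would argue the first assertion. Fix $i\in\olQ_0$ and consider its source sequence \eqref{soueceseq}. Because the image under $\phi$ of the arrows out of $i$ is a $k$-basis of $J_{\caC}(\phi(i),-)/J^2_{\caC}(\phi(i),-)$, the first map of \eqref{soueceseq} is precisely $\phi$ applied to $(\alpha)_{\alpha\in\olQ_1,s(\alpha)=i}$, which is the universal source map out of $\phi(i)$ in the sense of \eqref{Eqqb}. The defining exactness of the source sequence says that a morphism $\sum_\alpha g_\alpha\alpha$ starting at $i$ lies in $\Ker\phi$ exactly when its image factors through the second map of \eqref{soueceseq}; that is, precisely the relations $\sum_{\alpha}\gamma_{\alpha,h}\alpha$ and their $\bfP(\olQ)$-multiples lie in $\Ker\phi$. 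The content is therefore to show these generate all of $\Ker\phi$, not merely that they lie in it. Here I would use the exactness of \eqref{Eqqb} as an induction engine on path length: given any relation $r\in\Ker\phi$, the radical-layer analysis provided by the source sequence at its source vertex lets me subtract off a $\bfP(\olQ)$-combination of the displayed generators to strictly shorten $r$, and acyclicity guarantees the induction terminates. This is the step I expect to be the main obstacle, since it requires matching the abstract factorization coming from \eqref{Eqqb} against the concrete path-length filtration, and must be done uniformly over all vertices $i$ so that the resulting two-sided ideal is exactly $\Ker\phi$.

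Finally, the second assertion follows by the dual argument, replacing source sequences by sink sequences and \eqref{soueceseq} by \eqref{sinkseq}. Formally I would apply the first part to the opposite situation: the opposite quiver $\olQ^{op}$ is again acyclic and locally finite, the induced functor on $\bfP(\olQ^{op})\simeq\bfP(\olQ)^{op}$ is again fully dense, and a sink sequence of $\phi(j)$ in $\caC$ is a source sequence of the corresponding object in the opposite category. Transporting the generating set $\{\sum_\alpha\gamma'_{h,\alpha}\alpha^{op}\}$ back across the identification $p\mapsto p^{op}$ of paths yields exactly $\{\sum_{\alpha\in\olQ_1,t(\alpha)=j}\alpha\gamma'_{h,\alpha}\}$, which is the claimed set. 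Since $\Ker\phi$ is intrinsic to $\phi$, the two descriptions—one from source sequences, one from sink sequences—automatically agree, and this gives the full statement.
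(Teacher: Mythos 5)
Your argument is correct and is essentially the paper's intended proof: the paper states this lemma without a written proof, deferring to Lemma 6.4 of \cite{i11} and Proposition 3.6 of \cite{birsm11}, and your reduction --- placing the generators in $\Ker\phi$ via the vanishing composite in the source sequence, then using exactness of \eqref{Eqqb} together with fullness to peel off one radical layer at a time and induct outward from the source vertex (with the dual statement obtained by passing to $\olQ^{op}$) --- is exactly the standard argument underlying those references. The only point deserving one extra line is the termination of that induction, which rests on the identification $e_i k\olQ e_j=e_i\widehat{k\olQ}e_j$ for acyclic locally finite $\olQ$ that you already record at the outset.
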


\section{$n$-slice quiver $Q$ and the $n$-translation quiver $\zzs{n-1} Q$}
\label{sec:qzq}
A bound quiver $Q =(Q_0,Q_1,\rho)$  is called {\em $n$-properly-graded} if all the maximal bound paths have the same length $n$.
The (graded) algebra defined by an  $n$-properly-graded quiver is called an {\em $n$-properly-graded algebra.}
The quadratic dual quiver $Q^{\perp}$ of an  acyclic quadratic $n$-properly-graded quiver $Q$ is called an {\em $n$-slice quiver} or an $n$-slice.

Obviously, we have the following Proposition.

\begin{pro}\label{oppn}
If $Q$ is an $n$-properly-graded quiver, so is $Q^{op}$.

If $Q^{\perp}$ is an $n$-slice, so is $Q^{op, \perp}$.

If $\LL$ an $n$-properly-graded algebra, so is $\LL^{op}$.
\end{pro}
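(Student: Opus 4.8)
The plan is to base everything on the length-preserving bijection $p \mapsto p^{op}$ between paths of $Q$ and paths of $Q^{op}$, together with the fact that the identity map on underlying spaces realizes an anti-isomorphism $\LL = kQ/(\rho) \to \LL^{op} = kQ^{op}/(\rho^{op})$. First I would record that reversing arrows sends a path $p = \xa_m \cdots \xa_1$ from $i$ to $j$ in $Q$ to the path $p^{op} = \xa_1^{op} \cdots \xa_m^{op}$ from $j$ to $i$ in $Q^{op}$, and that this is a bijection on the set of all paths which preserves length and interchanges starting and terminating vertices. Since $\rho^{op} = \{\sum_t a_t p_t^{op} \mid \sum_t a_t p_t \in \rho\}$ by definition, the ideal $(\rho^{op})$ is exactly the image of $(\rho)$ under the $k$-linear reversal map; consequently a path $p$ is nonzero in $\LL$ (that is, a bound path of $Q$) if and only if $p^{op}$ is nonzero in $\LL^{op}$ (a bound path of $Q^{op}$).

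For part $1$, I then verify that maximality of bound paths is respected. A bound path $p$ of $Q$ is maximal precisely when it can be prolonged at neither end, i.e.\ there is no arrow $\xb$ with $\xb p$ a nonzero bound path and no arrow $\xc$ with $p\xc$ a nonzero bound path. Under reversal, prolonging $p$ at its terminating end corresponds exactly to prolonging $p^{op}$ at its starting end and vice versa, so $p$ is a maximal bound path of $Q$ if and only if $p^{op}$ is a maximal bound path of $Q^{op}$. Together with length preservation, this shows the maximal bound paths of $Q^{op}$ all have length $n$ exactly when those of $Q$ do, which is part $1$. Part $3$ is then immediate, since an $n$-properly-graded algebra is by definition one defined by an $n$-properly-graded quiver, and $\LL^{op}$ is the algebra defined by $Q^{op}$, so part $1$ applies verbatim.

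For part $2$, I unwind the definition of $n$-slice: the hypothesis that $Q^{\perp}$ is an $n$-slice means $Q$ is an acyclic, quadratic, $n$-properly-graded quiver. Acyclicity of $Q^{op}$ follows because reversing all arrows cannot produce an oriented cycle from an acyclic quiver; quadraticity of $Q^{op}$ follows because $\rho^{op}$ consists of linear combinations of paths of length $2$ whenever $\rho$ does (reversal preserving length and the normalization of common source, target and length); and $Q^{op}$ is $n$-properly-graded by part $1$. Hence $Q^{op}$ is again acyclic, quadratic and $n$-properly-graded, so its quadratic dual quiver $Q^{op,\perp} = (Q^{op})^{\perp}$ is an $n$-slice by definition.

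I expect no serious obstacle; the content is bookkeeping around the reversal bijection. The one point deserving care is the claim that nonzero-ness of a path is preserved under reversal — that passing to the opposite algebra neither collapses nor creates bound paths — which is precisely where the anti-isomorphism $\LL \to \LL^{op}$ (equivalently, the identification of $(\rho^{op})$ as the reversal image of $(\rho)$) is invoked, together with the parallel check that the normalization of $\rho$ transfers to $\rho^{op}$.
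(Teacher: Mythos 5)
Your proof is correct and is exactly the routine verification the paper has in mind when it states this proposition with only the word ``Obviously'' and no written proof: everything reduces to the length-preserving reversal bijection $p\mapsto p^{op}$ and the identification of $(\rho^{op})$ with the image of $(\rho)$ under the induced anti-isomorphism $kQ\to kQ^{op}$. Your write-up simply supplies the bookkeeping the paper omits, so there is nothing to compare beyond that.
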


For a finite acyclic $n$-properly-graded quiver  $Q$, we can relate an infinite bound  quiver $\zZ|_{n-1} Q$ to it as in \cite{g16}.

Let $\LL$ be the algebra defined by the $n$-properly-graded quiver $Q $.
Let $\caM$ be a set of maximal linearly independent set of maximal bound paths in $Q$.
We define the {\em returning arrow quiver } $\tQ= (\tQ_0,\tQ_1, \trho)$ of $Q$ as the bound quiver with the vertex set $\tQ_0 = Q_0$, the arrow set $\tQ_1 = Q_1\cup Q_{1,\caM}$, here \eqqc{retarr}{Q_{1,\caM} = \{\beta_{p}: t(p) \to s(p)|  p\in \caM\}} whose elements can be taken from the dual basis of $\caM$ in $D\LL_n$.
The relations is derived from those of its trivial extension $\dtL$ of $\LL$ (see Proposition 2.2 of \cite{fp02} or Proposition 3.1 of \cite{g20}).
$\dtL$ is a graded self-injective algebra by taking the arrows of $\tQ$ as degree $1$ generators.
The relation set \eqqcn{retrel}{\trho=\rho\cup \rho_{\caM} \cup \rho_0} of $\dtL$ is a disjoint union of $3$ subsets: the relation $\rho$ for $\LL$, \eqqcnn{retrelm}{\rho_{\caM} =\{\beta_{p} \beta_{p'} |  p', p\caM \}} and $\rho_{0}$, the relation of $D\LL$ as a $\LL$-bimodule generated by  $Q_{1,\caM}$.
If $\dtL$ is quadratic, then the elements in $\rho_{0}$ are linear combinations of the elements of the forms $\xa \beta_p$ and $ \beta_{p'}\xa' $ for $p,p'\in \caM$ and $\xa,\xa'\in Q_1$.

The following Proposition is obvious.
\begin{pro}\label{triopp}
Let $Q$ be an $n$-properly-graded quiver and let $\LL= kQ/(\rho)$ be the algebra defined by $Q$.
Then

$\tQ$ is the bound quiver of the trivial extension $\dtL$ of $\LL$.

$\dtL^{op} = (\dtL)^{op}$ and its bound quiver is $\tQ^{op}$.

If $\dtL$ is $n$-translation algebra, so is $\dtL^{op}$.
\end{pro}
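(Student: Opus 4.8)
The plan is to verify the three assertions of Proposition \ref{triopp} in turn, each being a structural statement about the interaction between the trivial extension construction $\Delta$ and the opposite construction $(-)^{op}$. First I would establish that $\tQ$ is the bound quiver of $\dtL$. By construction, $\dtL = \LL \oplus D\LL$ as a vector space, with $D\LL$ placed in the top degree and multiplication given by the bimodule structure. The degree-one generators of $\dtL$ are exactly the arrows of $Q$ (generating $\LL$ over $\LL_0$) together with one new arrow $\beta_p : t(p) \to s(p)$ for each maximal path $p \in \caM$, since these dual-basis elements in $D\LL_n$ are precisely what generates the socle $D\LL$ as a $\LL$-bimodule in the quadratic/self-injective setting. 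The vertex set is unchanged, $\tQ_0 = Q_0$, so $\tQ_1 = Q_1 \cup Q_{1,\caM}$ as defined in \eqref{retarr}, and the relation set is $\trho = \rho \cup \rho_{\caM} \cup \rho_0$ as described just above the statement. This is essentially the content of Proposition 2.2 of \cite{fp02} (or Proposition 3.1 of \cite{g20}) specialized to the $n$-properly-graded situation, so I would cite those and match the pieces.

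Next I would address the commutativity $\dtL^{op} = (\dtL)^{op}$. The key observation is that both the trivial extension and the opposite-algebra operation are defined in terms of very transparent linear-algebraic data, and they commute because duality $D$ is compatible with passing to opposite algebras: for any algebra $A$ one has a canonical identification $D(A^{op}) \cong (DA)^{op}$ of $A$-bimodules, with the left and right actions simply swapped. Concretely, $\Delta(\LL^{op}) = \LL^{op} \oplus D(\LL^{op})$ and $(\dtL)^{op} = (\LL \oplus D\LL)^{op} = \LL^{op} \oplus (D\LL)^{op}$; I would check that the multiplication maps agree under the identification $D(\LL^{op}) \cong (D\LL)^{op}$, so that the isomorphism is one of graded algebras. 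For the bound-quiver statement, I would invoke the discussion of opposite bound quivers from Section \ref{sec:pre}: since the bound quiver of $\dtL$ is $\tQ$, the bound quiver of $(\dtL)^{op}$ is $\tQ^{op}$, obtained by reversing every arrow and forming $\trho^{op}$. Under reversal, the arrows of $Q$ become the arrows of $Q^{op}$, and each returning arrow $\beta_p : t(p) \to s(p)$ becomes $\beta_p^{op} : s(p) \to t(p)$, which is exactly a returning arrow $\beta_{p^{op}}$ for the reversed maximal path $p^{op}$ in $Q^{op}$; by Proposition \ref{oppn}, $Q^{op}$ is again $n$-properly-graded, so this returning-arrow structure is consistent with the first assertion applied to $Q^{op}$.

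The third assertion, that $\dtL^{op}$ is an $n$-translation algebra whenever $\dtL$ is, follows by combining the first two parts with the definitions from Section \ref{sec:pre}. An $n$-translation algebra is by definition a graded self-injective algebra of Loewy length $n+2$ that is $(n+1,q)$-Koszul for some $q$. Being graded self-injective and the $(n+1,q)$-Koszul property are both self-dual under $(-)^{op}$: self-injectivity passes to the opposite algebra (this is standard, since $A$ is self-injective iff $A^{op}$ is), Loewy length is preserved by $(-)^{op}$, and the Koszul-type condition is likewise preserved because the graded Ext-algebra of $A^{op}$ is the opposite of that of $A$. Hence if $\dtL$ is an $n$-translation algebra, so is $(\dtL)^{op} = \dtL^{op}$, and its $n$-translation $\tau$ is the one induced by the (inverse) Nakayama permutation. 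I expect the main obstacle to be the careful bookkeeping in the second part: pinning down the precise identification $D(\LL^{op}) \cong (D\LL)^{op}$ as graded $\LL$-bimodules and confirming that the returning arrows and the three families of relations $\rho$, $\rho_{\caM}$, $\rho_0$ transform correctly under reversal without introducing sign or orientation errors. Once that identification is in hand, the remaining arguments are formal applications of the self-duality of the defining properties and of Proposition \ref{oppn}.
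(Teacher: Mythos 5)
Your proposal is correct. The paper offers no proof at all — it introduces the proposition with ``The following Proposition is obvious'' — and your argument (citing \cite{fp02}/\cite{g20} for the bound quiver of $\dtL$, the identification $D(\LL^{op})\cong(D\LL)^{op}$ for the commutation of $\Delta$ with $(-)^{op}$, and the self-duality of graded self-injectivity, Loewy length and the $(n+1,q)$-Koszul condition for the last part) is precisely the standard verification the paper implicitly relies on.
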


In fact, $\tQ$ is an $n$-translation quiver with trivial $n$-translation (see \cite{g16}).

If $\sigma$ is a grade automorphism of $\LL$, and $\dtsL$ is the twisted trivial extension of $\LL$ twisted by $\sigma$.
Then $\dtsL$ have the same quiver with $\dtL$, with relation set  \eqqcn{retrel}{\trho_{\sigma}=\rho\cup \rho_{\caM} \cup \rho_{0,\sigma},} where $$\rho_{0,\sigma}=\{\sum_{p_s\in \caM}a_s\xa_s\xb_{p_s}+\sum_{p_s\in \caM}b_s \xb_{p_s}\sigma(\xa'_s)| \sum_{p_s\in \caM}a_s\xa_s\xb_{p_s}+\sum_{p_s\in \caM}b_s \xb_{p_s}\xa'_s\in \rho_0\}.$$

Note that for an $n$-properly-graded quiver $Q$, its quadratic dual quiver $Q^{\perp}$ is an $n$-slice.
Let $\GG = k Q^{\perp}/(\rho^{\perp}) = \LL^{!,op}$ be the algebra defined by $Q^{\perp}$, $\GG$ is called an {\em $n$-slice algebra} if the trivial extensions $\dtL = \Delta\GG^{!,op}$ is an $n$-translation algebra.
By Proposition \ref{triopp}, we see that $ \Delta\GG^{ !} = \dtL^{op}$ is also an $n$-translation algebra.
So we have the following.

\begin{pro}\label{nslice:opp}
If $\GG$ is an $n$-slice algebra, $\GG^{op}$ is also an $n$-slice algebra.
\end{pro}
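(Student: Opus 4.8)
The plan is to reduce everything to Proposition~\ref{triopp}. Unwinding the definition, $\GG$ is an $n$-slice algebra exactly when $\dtL = \dtt{\GG^{!,op}}$ is an $n$-translation algebra; since $\GG = \LL^{!,op}$ and double quadratic duality gives $\GG^{!,op} = (\LL^{!,op})^{!,op} \simeq \LL$ (the orthogonal complement satisfies $(\rho^{\perp})^{\perp} = \rho$ for a quadratic quiver), this hypothesis is simply that $\dtL$ is an $n$-translation algebra. This is what I may assume throughout, and the goal is to show that $\dtt{(\GG^{op})^{!,op}}$ is an $n$-translation algebra.

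First I would identify $(\GG^{op})^{!,op}$. The decisive point is that forming the opposite algebra commutes with quadratic duality: on bound quivers this is the compatibility $(\rho^{op})^{\perp} = (\rho^{\perp})^{op}$ between reversing arrows and taking orthogonal complements, the same fact underlying Proposition~\ref{oppn}. Tracking the quiver through the constructions, $\GG^{op}$ is defined by $Q^{op,\perp}$, and its quadratic dual quiver is $(Q^{op,\perp})^{\perp} = Q^{op}$; hence $(\GG^{op})^{!,op} \simeq \LL^{op}$. Thus $\GG^{op}$ is an $n$-slice algebra if and only if $\dtt{\LL^{op}}$ is an $n$-translation algebra.

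Finally I would apply Proposition~\ref{triopp}. Its second statement gives $\dtt{\LL^{op}} = (\dtL)^{op}$, and its third statement gives that $(\dtL)^{op}$ is an $n$-translation algebra whenever $\dtL$ is. Since the hypothesis on $\GG$ is precisely that $\dtL$ is an $n$-translation algebra, it follows that $\dtt{\LL^{op}} = (\dtL)^{op}$ is an $n$-translation algebra, which is exactly the condition for $\GG^{op}$ to be an $n$-slice algebra. This also matches the remark preceding the statement, that $\dtt{\GG^{!}} = (\dtL)^{op}$ is an $n$-translation algebra, since $(\GG^{op})^{!,op} \simeq \GG^{!} \simeq \LL^{op}$.

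The one step requiring genuine care is the identification $(\GG^{op})^{!,op} \simeq \LL^{op}$: one must verify that $(\rho^{op})^{\perp}$ and $(\rho^{\perp})^{op}$ span the same subspace of the degree-two part of $kQ^{op}$, i.e. that the bilinear pairing defining $\rho^{\perp}$ is compatible with the path-reversal map $p \mapsto p^{op}$. This is routine once the normalization conventions on $\rho$ are respected, and every other step is a formal chain of identifications feeding into Proposition~\ref{triopp}, which does the real work.
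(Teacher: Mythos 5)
Your proof is correct and is essentially the paper's own argument: the paper likewise deduces the statement from Proposition~\ref{triopp}, noting that $\Delta\GG^{!}=\dtL^{op}$ is an $n$-translation algebra whenever $\dtL=\Delta\GG^{!,op}$ is. You have merely made explicit the identification $(\GG^{op})^{!,op}\simeq\GG^{!}\simeq\LL^{op}$ (via $(\rho^{op})^{\perp}=(\rho^{\perp})^{op}$), which the paper leaves implicit.
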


Let $\Pi(\GG)$ be the $(n+1)$-preprojective algebra of $\GG$.
We have that $\Pi(\GG) \simeq (\dtnL)^{!,op}$  \cite{g20} ($\GG$ is called a dual $\tau$-slice algebra there), here $\nu$ is the automorphism induced by the linear map on $kQ_1$ sending $\xa$ to $(-1)^n \xa$ for each arrow $\xa\in Q_1$.
It followings from \cite{g20} that $\Pi(\GG)$ is defined by the quadratic dual  quiver $\tQ^{\perp}$ of the returning quiver $\tQ$, with the relation set \eqqcn{relprepro}{\trho^{\perp} = \rho^{\perp} \cup \rho_{0,\nu}^{\perp},} where $\rho_{0,\nu}^{\perp}$ is a basis of the orthogonal subspace of $\rho_{0,\nu}$ in the space $k Q_1 Q_{\caM} + k  Q_{\caM} Q_1$ spanned by the paths of length $2$ with one arrow from $Q_1$ and the other one from $Q_{\caM}$.

Let $\LL$ be the algebra defined by the $n$-properly-graded quiver $Q $.
For simplicity, we assume that  $\dtL$ is quadratic.
The quiver $\zZ|_{n-1} Q$ is defined as the quiver  with vertex set \eqqcn{vertexzq}{(\zZ|_{n-1} Q)_0 =\{(u , t)| u\in Q_0, t \in \zZ\}} and arrow set \eqqcn{arrowzq}{\arr{rl}{(\zZ|_{n -1}Q)_1  = & \zZ \times Q_1 \cup \zZ \times Q_{1,\caM} \\ =& \{(\alpha,t): (u,t)\longrightarrow (u',t) | \alpha:u\longrightarrow u' \in Q_1, t \in \zZ\} \\& \quad \cup \{(\beta_p , t): (u', t) \longrightarrow (u, t+1) | p\in \caM, s(p)=u,t(p)=u'  \}.}}
$\zZ|_{n-1} Q$ is a  locally finite infinite quiver if $Q$ is locally finite.
The relation set of $\zZ|_{n-1} Q$ is defined as  \eqqcn{relzq}{\rho_{\zZ|_{n-1} Q} =\zZ \rho\cup \zZ \rho_{\caM} \cup \zZ\rho_0,} where
\eqqcn{rela}{\zZ \rho =  \{\sum_{s} a_s (\xa_s,t)(\xa'_s,t) |\sum_{s} a_s \xa_s \xa'_s \in \rho, t\in \zZ\},} \eqqcn{relb}{\zZ \rho_{\caM} =  \{(\beta_{p'},t)(\beta_p ,t+1)| \beta_{p'} \beta_{p}\in \rho_{\caM}, t\in \zZ\}} and \eqqcn{relc}{\arr{ll}{\zZ \rho_0 = &\{ \sum_{s'} a_{s'}  (\beta_{p'_{s'}}, t)  (\xa'_{s'}, t+1)   \sum_{s} b_s (\xa_s,t)  (\beta_{p_s} ,t)| \\ &\qquad \sum_{s'} a_{s'} \beta_{p'_{s'}} \xa'_{s'} + \sum_{s} b_s \xa_s \beta_{p_s} \in \rho_0 , t\in \zZ\}.}}

$\zZ|_{n-1} Q$ is a stable $n$-translation quiver with the $n$-translation defined by $\tau(i,m) =(i,m-1)$.
Similar to Proposition 5.5 of \cite{g16}, it is realized as a bound quiver of a smash product.
\begin{pro}\label{0nap:extendible1}
Let $\LL$  be an $n$-properly-graded algebra such that its trivial extension $\dtL$ is quadratic.
Then  the smash product $\dtL\#  k \zZ^*$ is a graded self-injective algebra with bound quiver $\zZ|_{n-1} Q $.
\end{pro}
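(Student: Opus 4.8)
The plan is to identify $\dtL\# k\zZ^*$ with the bound path algebra of a covering quiver of $\tQ$ attached to a grading on $\dtL$, exactly in the spirit of Proposition~5.5 of \cite{g16}, and then to transfer self-injectivity from $\dtL$ to the smash product.

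First I would isolate the grading that controls the smash product. In addition to the total path-length grading that exhibits $\dtL$ as an $n$-translation algebra, the trivial extension carries a $\zZ$-grading in which every arrow of $Q$ has degree $0$ and every returning arrow $\beta_p\in Q_{1,\caM}$ has degree $1$; concretely $(\dtL)_0=\LL$, $(\dtL)_1=D\LL$ and $(\dtL)_i=0$ otherwise. Because $\dtL$ is assumed quadratic, $D\LL$ is generated over $\LL$ by the $\beta_p$ in this degree, and the relation set $\trho=\rho\cup\rho_{\caM}\cup\rho_0$ is homogeneous for it: $\rho$ lies in degree $0$, $\rho_0$ in degree $1$, and $\rho_{\caM}$ in degree $2$.

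Next I would apply the smash-product/covering dictionary. For a $\zZ$-graded bound quiver algebra whose grading is induced by a weight on the arrows, $A\# k\zZ^*$ is isomorphic to the bound path algebra of the covering quiver with vertex set $\tQ_0\times\zZ$ in which a weight-$d$ arrow $a\colon u\to v$ lifts to arrows $(a,t)\colon (u,t)\to(v,t+d)$ for $t\in\zZ$, modulo the level-wise lifts of the homogeneous relations. With the grading above, the degree-$0$ arrows $\alpha\colon u\to u'$ lift to $(\alpha,t)\colon(u,t)\to(u',t)$ and the degree-$1$ returning arrows $\beta_p\colon t(p)\to s(p)$ lift to $(\beta_p,t)\colon(t(p),t)\to(s(p),t+1)$, which are precisely the two families of arrows defining $\zZ|_{n-1}Q$. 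It then remains to match relations: each element of $\rho$, $\rho_0$ and $\rho_{\caM}$ is a combination of length-$2$ paths of a single degree, so its lifts run over exactly $\zZ\rho$, $\zZ\rho_0$ and $\zZ\rho_{\caM}$, respectively. This identifies the bound quiver of $\dtL\# k\zZ^*$ with $\zZ|_{n-1}Q$.

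Finally I would settle self-injectivity. By Cohen--Montgomery duality the module category of $\dtL\# k\zZ^*$ is equivalent to the category of $\zZ$-graded $\dtL$-modules; since $\dtL$ is (graded) self-injective, graded injectives coincide with graded projectives, whence $\dtL\# k\zZ^*$ is self-injective. Grading the covering $\zZ|_{n-1}Q$ by path length makes all relations in $\zZ\trho$ homogeneous and the maximal bound paths of length $n+1$, so $\dtL\# k\zZ^*$ is graded self-injective of Loewy length $n+2$; by the preliminaries this is another way of saying that $\zZ|_{n-1}Q$ is a stable $n$-translation quiver. The step I expect to be the main obstacle is the relation-matching above: one must check, respecting the composition and sign conventions of the paper, that the lifts of $\rho_0$ and $\rho_{\caM}$ reproduce $\zZ\rho_0$ and $\zZ\rho_{\caM}$ on the nose rather than only up to reindexing, and it is precisely the quadraticity of $\dtL$ that confines everything to length $2$ and makes this bookkeeping routine. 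Self-injectivity, by contrast, is soft once the covering description is in place.
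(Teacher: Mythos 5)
Your argument is correct and follows essentially the route the paper intends: the paper states this proposition without proof, citing only the analogy with Proposition~5.5 of \cite{g16}, and that result is precisely the smash-product/covering dictionary you invoke, applied to the $\zZ$-grading on $\dtL$ with $\LL$ in degree $0$ and $D\LL$ in degree $1$, together with Cohen--Montgomery duality (equivalently, the identification of $\dtL\# k\zZ^*$ with the repetitive algebra of $\LL$ as in Proposition~\ref{znq}) for self-injectivity. Your flagged bookkeeping point about matching the lifts of $\rho_0$ and $\rho_{\caM}$ is exactly where the quadraticity hypothesis is used, so nothing is missing.
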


Clearly, we have that $\dtL^{op}\#  k \zZ^*$is a graded self-injective algebra with bound quiver $\zZ|_{n-1} Q^{op} =(\zZ|_{n-1} Q)^{op}$.
Write $\zzs{n-1}Q^{\perp}$ for the quadratic dual quiver of $\zzs{n-1} Q$ and $\zzs{n-1}Q^{op,\perp}$ for the quadratic dual quiver of $\zzs{n-1} Q^{op}$.
Then $\zZ|_{n-1} Q^{op}$ is an $n$-translation quiver with $\tau^{-1}$ as its $n$-translation.

\medskip

Complete $\tau$-slice and $\tau$-slice algebra are introduced for stable $n$-translation quiver in \cite{g12}.
Let $\olQ=(\olQ_0, \olQ_1, \olrho)$ be an acyclic stable $n$-translation quiver with $n$-translation $\tau$, and assume that $\olQ$ has only finite many $\tau$-orbits.
Let $Q$ be a full subquiver of $\olQ$.
$Q$ is called a  {\em complete $\tau$-slice} of $\olQ$ if it has the following properties:
(a).for each vertex $v $ of $\olQ$, the intersection of the $\tau$-orbit of $v$ and the vertex set of $Q$ is a single-point set. (b). $Q$ is convex in $\olQ$.

Since for each pair $i,j$ of vertices in $\olQ_0$, we have that $e_i\olrho e_j \subset \olrho$.
Take $$\rho = \{x = \sum_p a_p p \in \olrho| s(p), t(p) \in Q_0 \} \subset \olrho,$$ we get a full bound subquiver $Q = (Q_0, Q_1,\rho)$ of $\olQ$ and call it a {\em complete $\tau$-slice} of the bound quiver $\olQ$.
The algebra defined by a complete $\tau$-slice $Q$ in a stable $n$-translation quiver $\olQ$ is called a {\em $\tau$-slice algebra}.

In fact, every acyclic $\tau$-slice algebra is obtained as a $\tau$-slice algebra of $\zzs{n-1} Q$.

\begin{pro}\label{znq}
Let $\olQ$ be an acyclic stable $n$-translation quiver and  let  $Q=(Q_0,Q_1,\rho)$ be an complete $\tau$-slice of $\olQ$, and let $\LL$ be the $\tau$-slice algebra defined by $Q$.
Then $Q$ is acyclic, $\olL \simeq \dtL \# kZ^*$ is the repetitive algebra of $\LL$ and we have that $\olQ \simeq \zZ|_{n-1} Q$.
\end{pro}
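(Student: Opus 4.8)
The plan is to establish the three claims in order, using the quiver isomorphism as the engine that yields the algebra statements, and to lean on Proposition~\ref{0nap:extendible1}, which already presents $\zZ|_{n-1}Q$ as the bound quiver of $\dtL\#k\zZ^{*}$. First I would dispose of acyclicity: as $Q$ is a full subquiver of the acyclic $\olQ$, any oriented cycle of $Q$ would be one of $\olQ$, so $Q$ is acyclic. Next I would check that $\LL$ is $n$-properly-graded, that is, every maximal bound path $p$ of $Q$, say from $a$ to $b$, has length $n$. By convexity $p$ is a nonzero bound path of $\olQ$, so $\ell(p)\le n+1$ since $\olL$ has Loewy length $n+2$; the value $n+1$ is excluded, because a maximal bound path of $\olQ$ joins $a$ to $\tau^{-1}a$, two vertices of a single $\tau$-orbit, which cannot both lie in the transversal $Q$. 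For the matching lower bound I would read off from the $\tau$-hammock $H^{a}$, which layers the bound paths out of $a$ from length $0$ to the socle layer $\tau^{-1}a$ at length $n+1$, that convexity together with the transversal property pins the part of $H^{a}$ meeting $Q$ to the truncation at length exactly $n$; this step draws on the combinatorics of complete $\tau$-slices from \cite{g12}. Hence $\zZ|_{n-1}Q$ is defined.

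Granting that $\LL$ is $n$-properly-graded with $\dtL$ quadratic, Proposition~\ref{0nap:extendible1} identifies $\dtL\#k\zZ^{*}$ as the graded self-injective algebra with bound quiver $\zZ|_{n-1}Q$. It therefore suffices to produce an isomorphism of bound quivers $\olQ\simeq\zZ|_{n-1}Q$: once this is in hand, $\olL$ and $\dtL\#k\zZ^{*}$ are the algebras of the same bound quiver, and the standard identification of the smash product of a trivial extension with $k\zZ^{*}$ as a repetitive algebra closes the remaining assertion. On vertices I would use that $\olQ$ is acyclic and $\tau$ is everywhere defined, so each $w\in\olQ_{0}$ is $\tau^{-t}v$ for a unique pair $v\in Q_{0}$, $t\in\zZ$; setting $\Phi(w)=(v,t)$ yields a bijection $\olQ_{0}\to(\zZ|_{n-1}Q)_{0}=Q_{0}\times\zZ$ with $\Phi(\tau w)=(v,t-1)=\tau\Phi(w)$, so that $\Phi$ intertwines the two $n$-translations.

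The core of the proof is to extend $\Phi$ across arrows and relations. Since $\tau$ is induced by the Nakayama permutation, it is realised by a free automorphism of the bound quiver $\olQ$ for which $Q$ is a fundamental domain, so I would pass to the quotient $\olQ/\langle\tau\rangle$ and identify it with the returning-arrow quiver $\tQ$ of $Q$, which by Proposition~\ref{triopp} is the bound quiver of $\dtL$. Under this folding the arrows of $\olQ$ split into those lying over $Q_{1}$, which join vertices of one translate $\tau^{-t}Q$ and go to the slice arrows $(\alpha,t)$, and those lying over the new arrows of $\tQ$, which cross from level $t$ to level $t+1$; identifying the latter with the returning arrows $(\beta_{p},t)$ of \eqref{retarr} amounts to recognising that the level-raising arrows of $\olQ$ are dual to the maximal bound paths $p\in\caM$, exactly as in the trivial extension $\dtL=\LL\ltimes D\LL$. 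Finally I would transport the relations of the self-injective $\olL$ through $\Phi$ and sort them by how many returning arrows they involve, matching them with $\zZ\rho$, $\zZ\rho_{\caM}$ and $\zZ\rho_{0}$.

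I expect this last relation bookkeeping to be the main obstacle. One has to verify that the socle and mesh relations of $\olL$ which mix slice arrows with returning arrows decompose precisely as $\zZ\rho_{0}$, and that the folding $\olQ/\langle\tau\rangle\cong\tQ$ neither creates nor destroys relations; this requires the full $\tau$-hammock machinery recalled in the Preliminaries rather than merely the transversal property, and it is also the point at which the quadraticity of $\dtL$ is genuinely used.
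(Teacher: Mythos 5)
Your route is viable but genuinely different from the paper's, which runs the logic in the opposite direction: the paper first gets the algebra isomorphism $\olL \simeq \dtL\# k\zZ^*$ by citing Theorem 6.10 of \cite{g12} (all $\tau$-slice algebras of $\olQ$ have isomorphic trivial extensions, namely $\dtL$) and Theorem 5.12 of \cite{g12} (identifying $\olL$ with the repetitive algebra $\dtL\# k\zZ^*$), and only then reads off $\olQ \simeq \zZ|_{n-1}Q$ from Proposition~\ref{0nap:extendible1}. You instead build the bound-quiver isomorphism by hand from the $\tau$-orbit transversal and the folding $\olQ/\langle\tau\rangle \cong \tQ$, and deduce the algebra statement afterwards. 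What your approach buys is self-containedness in principle; what it costs is that the two steps you defer --- (i) that the arrows of $\olQ$ crossing from $\tau^{-t}Q$ to $\tau^{-t-1}Q$ are exactly indexed by the maximal bound paths $p\in\caM$ of $Q$ (and that those maximal bound paths all have length $n$, so that $\LL$ is $n$-properly-graded and $\zZ|_{n-1}Q$ is defined at all), and (ii) that the relations of $\olrho$ sort into $\zZ\rho\cup\zZ\rho_{\caM}\cup\zZ\rho_0$ --- are precisely the content of the \cite{g12} theorems the paper invokes, so you have not actually escaped that dependency, only relocated it. If you want your version to stand on its own, the socle pairing of the self-injective $\olL$ (an arrow $\beta$ into $\tau^{-1}u$ composes with paths of length $n$ to land in $e_{\tau^{-1}u}\olL_{n+1}e_u \simeq k$) is the mechanism that makes step (i) precise, and the $\tau$-hammock description of $e_{\tau^{-1}u}\olL e_u$ is what forces the relations in (ii); both should be written out rather than asserted. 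One further small point: Proposition~\ref{0nap:extendible1} assumes $\dtL$ is quadratic, which neither you nor the paper verifies from the bare hypothesis that $\olQ$ is a stable $n$-translation quiver, so you are no worse off than the paper there.
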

\begin{proof}
It follows from Theorem 6.10 of \cite{g12} that the trivial extension of all the $\tau$-slice algebras of $\olQ$ are isomorphic to $\dtL$.
By  Theorem 5.12 of \cite{g12}, we see that $\olL \simeq \dtL \# kZ^*$ is the repetitive algebra of $\LL$.
Now by Proposition \ref{0nap:extendible1}, the quiver of $\olL \simeq \dtL \# kZ^*$ is exactly $\zZ|_{n-1} Q$, hence we have that $\olQ \simeq \zZ|_{n-1} Q$.
\end{proof}

So we have the following consequence.

\begin{cor}\label{nslicezq}
Let $Q^{\perp}$ be an acyclic quadratic bound quiver and let $Q$ be its quadratic dual.
Let $\GG$ and $\LL$ be the algebras defined by $Q^{\perp}$ and $Q$, respectively.

Then $Q^{\perp}$ is an $n$-slice if and only if $Q$ is a complete $\tau$-slice in an acyclic stable $n$-translation quiver.

$\GG$ is an $n$-slice algebra if and only if $\LL$ is a $\tau$-slice algebra  and $\dtL$ is an $n$-translation algebra.
\end{cor}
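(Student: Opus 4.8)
The overall plan is to establish the quiver statement first and then read off the algebra statement from the definitions. Since $Q^\perp$ is assumed acyclic and quadratic, its quadratic dual $Q$ has the same underlying quiver and is again acyclic and quadratic, so by the definition of an $n$-slice quiver, $Q^\perp$ is an $n$-slice if and only if $Q$ is $n$-properly-graded. The quiver statement therefore reduces to the equivalence: $Q$ is $n$-properly-graded if and only if $Q$ is a complete $\tau$-slice of some acyclic stable $n$-translation quiver.

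For the forward implication I would attach to $Q$ the quiver $\olQ=\zZ|_{n-1}Q$. By Proposition \ref{0nap:extendible1} and the remark following it, $\olQ$ is an acyclic stable $n$-translation quiver with $n$-translation $\tau(i,m)=(i,m-1)$, realized as the bound quiver of the graded self-injective algebra $\dtL\#k\zZ^*$. Identifying $Q$ with the level-zero copy $\{(i,0)\mid i\in Q_0\}$, I then check the two defining properties of a complete $\tau$-slice. Property (a) is immediate, since the $\tau$-orbit of $(i,m)$ is $\{(i,m')\mid m'\in\zZ\}$, which meets level zero only in $(i,0)$. Property (b), convexity, follows from the explicit arrow set of $\zZ|_{n-1}Q$: the arrows $(\alpha,t)$ preserve the level $t$, the returning arrows $(\beta_p,t)$ strictly raise it, and there are no level-lowering arrows; consequently any path joining two level-zero vertices that once leaves level zero can never come back, so every such path stays inside $Q$.

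For the converse, suppose $Q$ is a complete $\tau$-slice of an acyclic stable $n$-translation quiver $\olQ$. Proposition \ref{znq} gives that $Q$ is acyclic and that $\olQ\simeq\zZ|_{n-1}Q$, so it remains only to verify that $Q$ is $n$-properly-graded. This is forced by the structure of $\olQ$: having Loewy length $n+2$, all its maximal bound paths have length $n+1$ and each factors through a single returning arrow, so under the identification with $\zZ|_{n-1}Q$ the maximal bound paths lying in the level-zero slice $Q$ are precisely the paths of $\caM$ underlying the returning arrows and have length $n$. The algebra statement is now formal: quadratic duality is involutive, so $\GG^{!,op}=\LL$ and hence $\Delta\GG^{!,op}=\dtL$, whence by definition $\GG$ is an $n$-slice algebra exactly when $Q^\perp$ is an $n$-slice and $\dtL$ is an $n$-translation algebra; by the quiver statement the first condition is equivalent to $Q$ being a complete $\tau$-slice of an acyclic stable $n$-translation quiver, that is, to $\LL$ being a $\tau$-slice algebra.

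I expect the main obstacle to be the converse of the quiver statement, namely verifying that an abstractly given complete $\tau$-slice is automatically $n$-properly-graded. Pinning down the lengths of the maximal bound paths inside the slice is the delicate step; the cleanest route is through the identification $\olQ\simeq\zZ|_{n-1}Q$ of Proposition \ref{znq} together with the $\tau$-hammock description of stable $n$-translation quivers. Once these lengths are controlled, the forward direction and the passage to the algebra statement are essentially bookkeeping.
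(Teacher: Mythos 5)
Your proposal is correct and follows essentially the route the paper intends: the paper presents this corollary as an immediate consequence of Proposition \ref{znq} (for the converse) together with the $\zZ|_{n-1}Q$ construction and Proposition \ref{0nap:extendible1} (for the forward direction), with the algebra statement obtained by unwinding the definitions of $n$-slice algebra and $\tau$-slice algebra exactly as you do. Your explicit verification of the orbit and convexity conditions for the level-zero copy of $Q$ in $\zZ|_{n-1}Q$, and your identification of where the length-$n$ condition on maximal bound paths comes from in the converse, fill in the details the paper leaves implicit.
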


If $\dtL$ is an $n$-translation algebra, then by Proposition \ref{triopp}, $\dtL^{op}$ is also an $n$-translation algebra.
Let $\olG = (\dtL\#  k \zZ^*)^{!,op}$ and $\olG^* = (\dtL^{op}\#  k \zZ^*)^{!,op}$.
Let $\olQ^{\perp}$ and $\olQ^{*,\perp}$ be  the bound quiver of $\olG$ and $\olG^*$ with relation sets $\olrho^{\perp}$ and $\olrho^{*,\perp}$, respectively.
Then $\olQ^{\perp} = \zzs{n-1} Q$, $\olQ^{*,\perp} = \zzs{n-1} Q^{op}$ as quiver and the relation sets  $\olrho^{\perp}$ and $\olrho^{*,\perp}$ are the quadratic dual relations as determined in \eqref{relationqd}.

In this way, an $n$-slice $Q^{\perp}$ is obtained from a complete $\tau$-slice $Q$ by taking the quadratic dual and an $n$-slice algebra $\GG$ is the quadratic dual of a $\tau$-slice algebra $\GG^{!,op}$.
We may regard  $n$-slices $Q^{\perp}$ and $Q^{op,\perp}$ as  $n$-dimensional generalization of slices in $\zzs{n-1} Q^{\perp} $ and in $\zzs{n-1} Q^{op, \perp}$, respectively.

An $n$-slice algebra is in fact an $n$-hereditary algebra defined in \cite{hio14}, that is, an $n$-representation-finite or an $n$-representation-infinite algebra.
The following theorem is Theorem 4.3  in \cite{gh21b}
\begin{thm}\label{suf_nec}
An $n$-hereditary algebra is $n$-slice algebra if and only if its $(n+1)$-preprojective algebra is $(q+1,n+1)$-Koszul for some $q$.
\end{thm}
\subsection{Example: Path algebra and $\zZ Q$.}
\label{sub:eg.here}
If $Q$ is an acyclic quiver, take the set of all the path of length $2$ as $\rho$, then $Q$ is a $1$-properly-graded quiver.
So its quadratic dual is the quiver $Q$ with empty set as the relation set.
The bound quiver algebra $\LL =kQ/(\rho)$ is $1$--properly-graded algebra and its quadratic dual $\GG =kQ$ is exactly the path algebra.
The quiver $\tQ$ of the trivial extension $\dtL$ of $\LL$ is the double quiver obtained by adding an opposite arrow $\xa^*$ for each arrow $\xa$ in $Q_1$, and the relation $\trho$ defining $\dtL$ is the set consisting of all the paths of length $2$ except those of the form $\xa^*\xa$ or $\xa \xa^*$, and the commutative relations for each pair of paths of the form $\xa^*\xa$ or $\xb \xb^*$ starting at the same vertex.
The trivial extension $\dtL$ and twisted trivial extensions $\dtnL$ are a stable $1$-translation algebras (weakly symmetric algebra with vanishing radical cube).
The quadratic dual of the twisted trivial extension $\dtnL$ is  the preprojective algebra $\Pi(Q)$ of $kQ$.
In this case, $\zZ|_0 Q = \zZ Q$ is the classical one, and the elements in $\rho_{\zZ|_0 Q}^{\perp} $ can be chosen as the mesh relations.

$Q$ is a complete $\tau$-slice of $\zZ|_0 Q$, and $\rho^{\perp} =\emptyset$.
So the path algebra $kQ$ is a $1$-slice algebra.

\subsection{Example: Auslander-Reiten quiver and $\zzs{1}Q$}
\label{sub:eg.mesh}
We can always associate a $2$-slice algebra to the Auslander-Reiten quiver of a representation-directed algebra.

If $Q^{\perp}= (Q_0,Q_1,\rho^{\perp})$ is an Auslander-Reiten quiver of a representation-directed algebra $A$, then  $\rho^{\perp}$ is the set of mesh relations.
The algebra $\GG$ defined by $Q^{\perp}$ is the Auslander algebra of $A$.
Let $\LL$ be the quadratic dual of $\GG$, then $\LL$ is an algebra with vanishing radical cube and hence is a $1$-translation algebra \cite{g02} and its bound quiver $Q= (Q_0, Q_1, \rho)$ is a $1$-translation quiver.
It is easy to see that $Q$ is a $2$-properly-graded and $\LL$ is a $2$-properly-graded algebra.
The relation set $\rho$ can be taken as a basis of the orthogonal space of $\rho^{\perp}$ in $kQ_2$.

In this case, $\tL$ is $2$-translation algebra, and $Q$ is a complete $\tau$-slice in stable $2$-translation quiver $\zZ|_1 Q$.
So the quadratic dual $\GG$ of $\LL$ is a $2$-slice algebra.

Let $Q^{\perp}$ be the Auslander-Reiten quiver of path algebra of quiver of type $A_4$ with linear orientation,
$$
\xymatrix@C=0.5cm@R0.3cm{
&& \stackrel{(1,1)}{\circ} \ar[r] &\stackrel{(2,1)}{\circ} \ar[r]\ar[d] &\stackrel{(3,1)}{\circ} \ar[r]\ar[d] &\stackrel{(4,1)}{\circ}\ar[d]\\
&&              &\stackrel{(1,2)}{\circ} \ar[r]   &\stackrel{(2,2)}{\circ} \ar[d] \ar[r]       &\stackrel{(3,2)}{\circ}\ar[d]  &{} \\
&&              & &\stackrel{(1,3)}{\circ} \ar[r]       &\stackrel{(2,3)}{\circ}\ar[d]  &{} \\
&&                           &&      &\stackrel{(1,4)}{\circ}  &{} \\
}
$$
Then the quadratic dual  $Q$ of $Q^{\perp}$ is a $1$-translation quiver with commutative relation for each square and zero relation for each pair of successive arrows heading to the same direction.
The $1$-translation is the usual translation with $\tau (i,t) = (i,t-1)$ when both $(i,t)$ and $ (i,t-1)$ are in the quiver.
So $Q$ is a $2$-properly-graded quiver.
The returning arrow quiver $\tQ$ is
$$
\xymatrix@C=0.5cm@R0.3cm{
&& \stackrel{(1,1)}{\circ} \ar[r] &\stackrel{(2,1)}{\circ} \ar[r]\ar[d] &\stackrel{(3,1)}{\circ} \ar[r]\ar[d] &\stackrel{(4,1)}{\circ}\ar[d]\\
&&              &\stackrel{(1,2)}{\circ} \ar[r]\ar@[red][ul]  &\stackrel{(2,2)}{\circ} \ar[d] \ar[r]\ar@[red][ul]      &\stackrel{(3,2)}{\circ}\ar[d]\ar@[red][ul] &{} \\
&&              & &\stackrel{(1,3)}{\circ} \ar[r]\ar@[red][ul]      &\stackrel{(2,3)}{\circ}\ar[d]\ar@[red][ul] &{} \\
&&                           &&      &\stackrel{(1,4)}{\circ}\ar@[red][ul] &{} \\
}
$$
\normalsize
The full subquiver formed by three neighbour slices in the quiver $\zzs{1}Q$ looks like
\tiny$$
\xymatrix@C=0.5cm@R0.3cm{
&&&&&
&&&\\
&&&
&&& \stackrel{(1,1,2)}{\circ} \ar[r] &\stackrel{(2,1,2)}{\circ} \ar[r]\ar[d] &\stackrel{(3,1,2)}{\circ} \ar[r]\ar[d] &\stackrel{(4,1,2)}{\circ}\ar[d] &&&&\\
&&&
&&
&&\stackrel{(1,2,2)}{\circ} \ar[r] &\stackrel{(2,2,2)}{\circ} \ar[r] \ar[d]&\stackrel{(3,2,2)}{\circ}\ar[d]&&&&&\\
&&&&  \stackrel{(1,1,1)}{\circ} \ar[r] &\stackrel{(2,1,1)}{\circ} \ar[r]\ar[d] &\stackrel{(3,1,1)}{\circ} \ar[r]\ar[d] &\stackrel{(4,1,1)}{\circ}\ar[d]
&\stackrel{(1,3,2)}{\circ} \ar[r]     &\stackrel{(2,3,2)}{\circ}\ar[d]
 &\\
&&
&& &\stackrel{(1,2,1)}{\circ} \ar[r]\ar@[red][uuur]  &\stackrel{(2,2,1)}{\circ}\ar[r]\ar@[red][uuur] \ar[d]&\stackrel{(3,2,1)}{\circ}\ar[d]\ar@[red][uuur]
 &   &\stackrel{(1,4,2)}{\circ}&{} \\
&&\stackrel{(1,1,0)}{\circ} \ar[r] &\stackrel{(2,1,0)}{\circ} \ar[r]\ar[d] &\stackrel{(3,1,0)}{\circ} \ar[r]\ar[d] &\stackrel{(4,1,0)}{\circ}\ar[d]&\stackrel{(1,3,1)}{\circ} \ar[r]\ar@[red][uuur]  &\stackrel{(2,3,1)}{\circ} \ar[d]\ar@[red][uuur]&&\\
&& &\stackrel{(1,2,0)}{\circ} \ar[r]\ar@[red][uuur]  &\stackrel{(2,2,0)}{\circ}\ar[r]\ar@[red][uuur] \ar[d]&\stackrel{(3,2,0)}{\circ}\ar[d]\ar@[red][uuur]
&&      \stackrel{(1,4,1)}{\circ}\ar@[red][uuur] &{} \\
&&&&\stackrel{(1,3,0)}{\circ} \ar[r]\ar@[red][uuur]  &\stackrel{(2,3,0)}{\circ} \ar[d]\ar@[red][uuur]&&\\
&&&&      &\stackrel{(1,4,0)}{\circ}\ar@[red][uuur] &{} \\
}
$$
\normalsize
This quiver $\zzs{1}Q^{\perp}$ is the same as the cylinder constructed for $\mathcal U^{(2)}$ in Example 6.13 of \cite{i11}

\section{Bound path category of a dual stable $n$-translation quiver}
\label{sec:bpc}

One important feature of the $n$-translation algebra is that it induces $n$-almost split sequences in the category of finitely generated projective modules of its quadratic dual via Koszul complexes, under certain conditions.
Now we study the bound path categories related to an acyclic $n$-translation quiver.

Throughout this section, we assume that $\olQ= (\olQ_0,\olQ_1, \overline{\rho} )$ is an acyclic stable $n$-translation quiver with $n$-translation $\tau$ such that $\olL \simeq k\olQ/(\olrho)$ is an $n$-translation algebra, and $\olQ$ has only finite many $\tau$-orbits.
Then $\olL \simeq k\olQ/(\olrho)$ is $(n+1,q)$-Koszul algebra for some integer $q\ge 2$ or $q=\infty$ \cite{g16}.
Let $\olG$ be the quadratic dual of $\olL$ with bound quiver $\olQ^{\perp} =(\olQ_0,\olQ_1, \overline{\rho}^{\perp} )$.

Let $\bcaG = \add(\olG)$ (respectively, $\bcaL = \add(\olL)$) be the category of finite generated projective $\olG$-modules (respectively, $\olL$-modules).
$\{e_i| i\in \olQ_0\}$ is a complete set of orthogonal primitive idempotents.
We will use the same notations for the orthogonal primitive idempotents when no confusion occurs.
Write $\ind \caC$ for the set of iso-classes of indecomposable objects in a category $\caC$, we have $$\ind \bcaG =\{ \olG e_i| i\in \olQ_0\}, \quad \mbox{and} \quad \ind \bcaL =\{ \olL e_i| i\in \olQ_0\}.$$
Since $\olQ$ is acyclic, $\bcaG$ and $\bcaL$ are Hom-finite Krull-Schmidt and we have $\End X \simeq k$ for each indecomposable object $X$ in these categories.

Recall that an additive category $\caA$, equipped with a function $deg: \ind \caA  \to \zZ$, assigning an indecomposable object to its degree, is called an {\em Orlov category} if it satisfies the following conditions:(1) All Hom-spaces in $\caA$ are finite-dimensional; (2) For any $X \in \ind \caA $, $\End_{\caA}(X) \simeq k$; (3) If $X, X' \in \ind \caA $ with $deg(X) \le deg(X' )$ and $X\not\simeq X'$, then $\hhm_{\caA}(X,X' ) = 0$ (see \cite{ar13}).

By Proposition \ref{pathalgcat}, $\add \olL$ is equivalent to the bound path category $\caP(\olQ^{op})$ and $\add \olG$ is equivalent to the bound path category $\caP(\olQ^{op,\perp})$.
Take a $\tau$-slice $Q$ of $\olQ^{op}$, then by Proposition \ref{znq}, $\olQ^{op}\simeq \zzs{n-1} Q$.
Since $Q=(Q_0,Q_1,\rho_Q)$ is an acyclic bound quiver, we can define an order on $Q$ by indexing its vertices as $Q_0 = \{u_1,\cdots,u_l\}$ such that there is no path from $u_t $ to $u_{t'}$ if $t<t'$.
Reindex the vertices in $\olQ^{op}$ as \eqqcn{indexolq}{\{(u_t,r)| 1\le t\le l, r\in \zZ\},} using $\zzs{n-1} Q $.
For each vertex $i = (u_t,r) \in \olQ^{op}_0$, define \eqqcn{dgq}{d_Q(i)= d_Q (u_t,r) = -rl +t.}
Then there is no path in $\olQ^{op}$ from $i=(u_t,r)$ to $i'=(u_{t'}, r')$ if $d_Q (i)< d_Q (i')$.
We call this order {\em the order of $\olQ^{op}$ induced by the path order of $Q_0 = \{u_1,\cdots,u_l\}$}.
We have that $1\le d_Q(i) \le l$ for $i\in Q_0$, and the full subquiver $Q[r]$ with vertex set \eqqcn{vertexGr}{\{i\in \olQ^{op}_0 | rl+1\le d_Q(i) \le (r+1) l\}} are complete $\tau$-slices isomorphic to $Q$, and $Q[0] = Q$.
It is obvious that if there is a path from $i $ to $j$ in $\olQ^{op}$, then $d_Q(i) \ge d_Q(j)$.
Define the degrees for $\bcaG$ and $\bcaL$ by setting $$deg(\olG e_i)= d_Q(i)  \mbox{ and } \deg(\olL e_i) =d_Q(i),$$ then we immediately have the following.

\begin{pro}\label{orlov}
With the degrees defined above, $\bcaG$ and $\bcaL$ are both Orlov categories.
\end{pro}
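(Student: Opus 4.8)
The plan is to verify the three defining conditions of an Orlov category directly from the structure already established for $\bcaG$ and $\bcaL$. Conditions (1) and (2) are immediate: since $\olQ$ is acyclic with only finitely many $\tau$-orbits, the excerpt already records that $\bcaG$ and $\bcaL$ are Hom-finite Krull-Schmidt categories with $\End X \simeq k$ for every indecomposable $X$. Thus the entire content of the proposition is condition (3), the vanishing $\hhm_{\caA}(X,X') = 0$ whenever $\deg(X) \le \deg(X')$ and $X \not\simeq X'$.

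The key step is to translate condition (3) into a statement about paths in the bound quiver. By Proposition \ref{pathalgcat}, $\add \olL \simeq \caP(\olQ^{op})$ and $\add \olG \simeq \caP(\olQ^{op,\perp})$, so for indecomposables $\olL e_i$ and $\olL e_j$ (respectively $\olG e_i$ and $\olG e_j$) the Hom-space is spanned by the bound paths in $\olQ^{op}$ (respectively $\olQ^{op,\perp}$) from $j$ to $i$. Hence $\hhm_{\caA}(\olL e_i, \olL e_j) \ne 0$ forces the existence of a nonzero path from $j$ to $i$ in the relevant quiver. Since $\olQ^{op}$ and $\olQ^{op,\perp}$ share the same underlying quiver $\zzs{n-1}Q$ (they differ only in relations), I can argue uniformly: a nonzero path from $j$ to $i$ is in particular a path, so by the monotonicity property recorded just before the statement, namely that the existence of a path from $i$ to $j$ forces $d_Q(i) \ge d_Q(j)$, a nonzero Hom from the object indexed $j$ to the object indexed $i$ gives $d_Q(i) \ge d_Q(j)$, i.e. $\deg(\olL e_i) \ge \deg(\olL e_j)$.

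From here condition (3) follows by contraposition. Suppose $X = \olL e_i$ and $X' = \olL e_j$ are nonisomorphic with $\deg(X) \le \deg(X')$, and suppose toward a contradiction that $\hhm_{\caA}(X,X') \ne 0$. Then there is a nonzero path from $i$ to $j$ in $\olQ^{op}$, so by the monotonicity property $d_Q(j) \ge d_Q(i)$, giving $\deg(X') \ge \deg(X)$; combined with $\deg(X) \le \deg(X')$ this alone does not yet force a contradiction, so I must also use that equality of degrees together with a nonzero path is only possible when $i = j$. Indeed the indexing was chosen so that $d_Q$ is injective on $Q_0$ within each slice and strictly decreases along genuine arrows crossing between slices; a nonzero path of positive length strictly decreases $d_Q$, so $\deg(X) = \deg(X')$ with a nonzero path forces the path to have length $0$, whence $i = j$ and $X \simeq X'$, contradicting $X \not\simeq X'$. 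The same argument applied verbatim to $\olQ^{op,\perp}$ settles $\bcaG$.

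The main obstacle is the careful handling of the boundary case of equal degrees: I must confirm that the reindexing via $\zzs{n-1}Q$ makes $d_Q$ strictly order-reflecting, so that a nonzero morphism between objects of equal degree can only be a scalar endomorphism. This rests on the fact that the slices $Q[r]$ are convex complete $\tau$-slices and that $d_Q$ is defined so as to be injective on vertices and to decrease strictly along every arrow of $\olQ^{op}$; once this strictness is in hand, the vanishing in condition (3) is automatic for both categories simultaneously, since they share the same vertex set and the same underlying path structure.
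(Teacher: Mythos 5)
Your overall strategy is exactly the one the paper intends (the paper states Proposition \ref{orlov} with no written proof, as an immediate consequence of the preceding discussion): conditions (1) and (2) of an Orlov category are already recorded for $\bcaG$ and $\bcaL$, and condition (3) reduces, via Proposition \ref{pathalgcat}, to the monotonicity of $d_Q$ along paths of $\olQ^{op}$. However, your write-up garbles the orientations, and as literally written the argument does not close. Under Proposition \ref{pathalgcat} one has $\hhm(\olL e_i,\olL e_j)\simeq e_i\olL e_j$, spanned by the bound paths from $i$ to $j$ in $\olQ^{op}$ (not from $j$ to $i$ as you first assert), and the paper's monotonicity states that a path from $i$ to $j$ forces $d_Q(i)\ge d_Q(j)$ (you quote it correctly once and then apply it as $d_Q(j)\ge d_Q(i)$). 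The net effect of these two flips is that in your contraposition you derive $\deg(X')\ge\deg(X)$, which merely restates the hypothesis; you notice this, but then only dispose of the case $\deg(X)=\deg(X')$, so the case $\deg(X)<\deg(X')$ is never actually excluded. The repair is immediate and uses only facts you already invoke: a nonzero $\hhm(\olL e_i,\olL e_j)$ with $i\ne j$ yields a positive-length path from $i$ to $j$ in $\olQ^{op}$, and since $d_Q$ strictly decreases along every arrow of $\zzs{n-1}Q$ (both the arrows inside a slice, by the choice of path order on $Q_0$, and the returning arrows), this gives $d_Q(i)>d_Q(j)$, i.e.\ $\deg(X)>\deg(X')$, contradicting $\deg(X)\le\deg(X')$ outright. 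The same one-line argument applies verbatim to $\bcaG$ since $\olQ^{op,\perp}$ has the same underlying quiver. So the proof is correct in substance once the directions are straightened out, but the orientation bookkeeping needs to be fixed before it is a complete argument.
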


Write \eqqcn{caGt}{\caG[t]= \add\{\olG e_j | j\in Q_0[t]\}.}
Denote by $\mathbb Z^+$ the set of nonnegative integers.
Let  $\olQ^{+}$  and $\olQ^{-} $  be the full subquivers of $\zZ|_{n-1} Q^{\perp}$ with the vertex sets \eqqcn{vertexQpm}{\olQ^{+}_0 = \{\tau^t i | i\in Q_0, t\in \zZ^+ \} \mbox{ and } \olQ^{-}_0 = \{\tau^{-t} i | i\in Q_0, t\in \zZ^+ \},} respectively.
Let $\LL = k Q^{op}/(\rho_{Q}^{op})$ and let $\GG = kQ^{op}/(\rho_{Q}^{op,\perp})$.
Write $\add(\GG)$ (respectively, $\add(\LL)$) for the category of finite generated projective $\GG$-modules (respectively, $\LL$-modules), then $\caG[t]$ are equivalent to $\caG =\add \GG$ for all $t$.
$\{ \GG e_i| i\in Q_0\}$ (respectively, $\{ \LL e_i| i\in Q_0\}$) is a set of non-isomorphic indecomposable objects in $\add(\GG)$ (respectively, $\add(\LL) $).
Write \eqqcn{bcaGpm}{\bcaG^+ =\add \{ \olG e_j| j\in \olQ^{+}_0\}, \mbox{ and } \bcaG^- =\add \{ \olG e_j| j\in \olQ^{-}_0\}.}
Then for $\olG e_j\in \bcaG^+$, we have $u\in Q_0$ and $t\in \zZ^+$ such that $j= \tau^{t} u$, and for $\olG e_{j'}\in \bcaG^-$, we have $u'\in Q_0$ and $t'\in \zZ^+$ such that $j'= \tau^{-t'} u'$.
If $u$ is a vertex of $Q_0$, we identify the vertex $u$ with $(u,0)$ in $\zzs{n-1} Q$, then  $\tau^t u = (u, -t)$ for $t\in \zZ$.

\medskip

Let $Q$ be a full subquiver of $\olQ^{op}$, a full subquiver $Q'$ of $Q$ is called  {\em initial} in $Q$ provided that it is convex and for each  arrow $\xa$ from $i$ to $j$ in $Q$, if  $j \in Q'_0$, then $i \in Q'_0$.
A {\em terminating } subquiver of $Q$ is defined dually.

\begin{pro}\label{relation:local}
Let $\caC$ be a Hom-finite Krull-Schmidt $k$-category, and assume that   $\End_{\caC} X/J_{} \End_{\caC} X \simeq k$ for each indecomposable object $X$ in $\caC$.
If there is a correspondence $F$ from the objects of $\caC$ to the objects of $\bcaG$ satisfies the conditions
\begin{itemize}
\item[(i).] If $X \in \ind(\caC)$, then $F(X) = \olG e_i$ for some $i\in \olQ^{op}_0$.

\item[(ii).] The full subquiver $Q = Q_{\caC}$ of $\olQ^{op}$ with vertex set $$Q_{\caC,0} =\{i \in \olQ^{op}_{0}| \olG e_i \simeq F(X) \mbox{ for some } X\in \ind(\caC)\}$$ is a $\tau$-mature subquiver of $\olQ^{op}$ containing an initial (respectively, a terminating) full subquiver $Q'$, and $Q'$ contains a complete $\tau$-slice $Q''$ of $\olQ^{op}$.
\end{itemize}
    Let \eqqcn{caGCGp}{\caG(\caC) = \add \{\olG e_j | j\in Q_{\caC,0}\} \mbox{ and }\caG' = \add \{\olG e_j | j\in Q_0'\}.}
\begin{itemize}
\item[(iii).] $F$ induces an equivalence from $\caC' =\add\{X| F(X) = \GG e_j, j\in Q'_0\}$ to $\caG'$.

\item[(iv).] If \eqqcn{sinksourceseq}{M_{n+1} \lrw \cdots \lrw M_t \lrw \cdots \lrw M_0} is a sink sequence (respectively, source sequence) in $\caC$ then \eqqcn{sinksourceseqimg}{F(M_{n+1}) \lrw \cdots \lrw F( M_t) \lrw \cdots \lrw F(M_0)} is a sink sequence (respectively, source sequence)  in $\caG(\caC)$.
\item[(v).] If $X, Y$ are indecomposable objects in $\caC$ such that $F(X) =\olG e_i, F(Y) =\olG e_j$ and $d_{Q''} (i) < d_{Q''} (j)$, then $\hhom_{\caC}(X,Y) =0$.
\end{itemize}
Then $F$ defines an equivalence of $\caC$ with the full subcategory $\caG(\caC)$ of $\bcaG$.
\end{pro}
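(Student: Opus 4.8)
The plan is to lift the object correspondence $F$ to a $k$-linear functor and prove it is an equivalence onto $\caG(\caC)$ by realizing both sides as bound path categories. By Proposition \ref{pathalgcat}, the full subcategory $\caG(\caC) = \add\{\olG e_j \mid j\in Q_{\caC,0}\}$ of $\bcaG$ is equivalent to the bound path category $\caP(\olQ^{op,\perp}|_{Q_{\caC,0}}) = \bfP(Q_{\caC})/\caI$, where $\caI$ is the ideal of dual relations and $\bfP(Q_{\caC})$ is the path category of the underlying quiver of $Q_{\caC}$; since $Q_{\caC}$ is $\tau$-mature (condition (ii)), Proposition \ref{GGnass} guarantees that $\caG(\caC)$ carries the $n$-almost split sequences given by the Koszul complexes $\xi_i$ of \eqref{nassinsubG}. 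Because $\olQ^{op}$, hence $\olQ^{op,\perp}$, is acyclic and locally finite, the path category coincides with its completion, so it suffices to build a full dense functor $\phi\colon \bfP(Q_{\caC})\to\caC$ sending each vertex $i$ to an indecomposable $X_i$ with $F(X_i)\simeq\olG e_i$ and satisfying $\Ker\phi=\caI$; inverting the resulting equivalence $\bfP(Q_{\caC})/\caI\simeq\caC$ then yields $\caC\simeq\caG(\caC)$ realizing $F$. I treat the case where $Q'$ is initial and one uses sink sequences; the terminating/source case is entirely dual.

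First I would fix the functor on arrows. For an indecomposable $Y=X_j$ the sink map $M\to Y$ exists in the Hom-finite Krull--Schmidt category $\caC$, and by condition (iv) it is carried by $F$ to the sink map of $\olG e_j$ in $\caG(\caC)$, whose domain is $\bigoplus_{\alpha\colon i\to j}\olG e_i$ over the arrows of $\olQ^{op,\perp}$. Provided $F$ is injective on iso-classes of indecomposables---which is forced by condition (i) and the very definition of $Q_{\caC,0}$---Lemma \ref{category:arrows} then gives $d_{X_i,X_j}=\dmn_k J_{\caC}(X_i,X_j)/J_{\caC}^2(X_i,X_j)$ equal to the number of arrows $i\to j$ in $\olQ^{op,\perp}$. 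Assigning to each arrow $a\colon i\to j$ of $Q_{\caC}$ the corresponding component of the sink map of $X_j$ produces morphisms whose classes form a $k$-basis of each $J_{\caC}/J_{\caC}^2$, so the extension principle recalled before Lemma \ref{category:relation} yields a unique full dense functor $\phi\colon\bfP(Q_{\caC})\to\caC$.

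Next I would compute $\Ker\phi$. At an interior vertex $i$, i.e. one with $\tau^{-1}i\in Q_{\caC}$, the object $\olG e_i$ has a genuine sink sequence $\xi_i$ in $\caG(\caC)$; by condition (iv) the sink sequence of $X_i$ in $\caC$ is carried to $\xi_i$, so Lemma \ref{category:relation}(2) shows that the relations of $\Ker\phi$ read off at $i$ from the last two maps of that sink sequence correspond under $F$ to the last two maps of $\xi_i$, which generate exactly the ideal $\caI$ of relations of $\caP(\olQ^{op,\perp}|_{Q_{\caC,0}})$. Summing over all interior $i$ identifies $\Ker\phi$ with $\caI$, whence $\caC\simeq\bfP(Q_{\caC})/\caI\simeq\caG(\caC)$.

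The main obstacle is that Lemma \ref{category:relation} needs a complete sink sequence at every vertex, while in $\caC$ only the interior vertices enjoy one; at the boundary of $Q_{\caC}$ no relation can be read off this way, and one must also rule out spurious morphisms and check faithfulness directly. This is precisely where the remaining hypotheses enter: condition (iii) supplies the base equivalence $\caC'\simeq\caG'$ on the initial subquiver $Q'$, which contains a complete $\tau$-slice $Q''$, and the Orlov structure of Proposition \ref{orlov} together with condition (v) provides the triangularity ($\hhom_{\caC}(X_i,X_j)=0$ whenever $d_{Q''}(i)<d_{Q''}(j)$) that makes the bookkeeping closed. Concretely, I would run an induction on the degree $d_{Q''}$ with $Q'$ as base, using the completeness of the sink sequences at interior vertices to propagate fullness, density, and faithfulness outward one $\tau^{-1}$-step at a time: at each step condition (v) guarantees that no backward morphism obstructs injectivity on the relevant Hom-space, while the matched sink sequences guarantee that the cokernel of $F$ on that Hom-space also vanishes. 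Verifying that this induction closes up---that injectivity and surjectivity of $F$ on each $\Hom_{\caC}(X_i,X_j)$ are acquired simultaneously with the degree count, and that the boundary relations are already forced by the interior ones via convexity of $Q_{\caC}$---is the technical crux of the argument.
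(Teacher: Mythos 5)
Your proposal follows essentially the same route as the paper's own proof: realize $\caG(\caC)$ as a bound path category via Proposition \ref{pathalgcat}, anchor the equivalence on $Q'$ using (iii), and induct outward along the degree function $d_{Q''}$, using (v) to kill backward Hom-spaces and the matched sink sequences together with Lemmas \ref{category:arrows} and \ref{category:relation} to identify arrows and relations one $\tau^{-1}$-step at a time. The only caveat is that sink maps in $\caC$ are not automatic from Hom-finiteness and the Krull--Schmidt property alone; the paper instead obtains the needed sink sequences in $\caC$ by transporting the Koszul sink sequences of $\bcaG$ through condition (iv), but this does not alter the structure of the argument.
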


\begin{proof}
By Proposition \ref{pathalgcat}, we have that $\caP(\olQ^{op,\perp}) \simeq \bcaG$, so there is a functor $H$ from the path category $\bfP(\olQ^{op,\perp})$ of $\olQ^{op}$ to $\bcaG$ with kernel the ideal generated by $\olrho^{op,\perp}$.

Since $\caC$ is Krull-Schmidt and Hom-finite, we have that $\End_{\caC} M$ is finite dimensional for each object $M$ of $\caC$.
Since $\End_{\caC} X/J_{} \End_{\caC} X \simeq k$ for each indecomposable object $X$ in $\caC$, so $\End_{\caC} M/J_{} \End_{\caC} M \simeq k^s$ is a direct product of $k$ for each basic object $M$ in $\caC$.

Note that $Q_{\caC}$ is convex, so for any $i,j\in Q_{\caC,0}$, a path from $i$ to $j$ in $\olQ^{op}$ is also a path in $Q_{\caC}$.
Thus we may require that the relation set \eqqcn{rhocaC}{{\rho_{\caC}}^{\perp} = \{e_j x e_i| x \in \olrho^{op,\perp}, i,j \in Q_{\caC,0}\}} is a subset of $\olrho^{op,\perp}$.
The full subcategory \eqqcn{caGC}{ \caG(\caC) = \add \{\olG e_j | j\in Q_{\caC,0}\}} is the bound path category of  $Q_{\caC}=(Q_{\caC,0}, Q_{\caC,1}, {\rho_{\caC}}^{\perp})$, by Proposition \ref{pathalgcat}.
$\caG'$ is a full subcategory of $\caG(\caC)$, which is equivalent to the bound path category of ${Q'}^{\perp} = ( Q'_0, Q'_1, {\rho'}^{ \perp})$, since $Q'$ is convex.

By (iii), $F$ induces an equivalent functor, write again as $F$, from $\caC'$ to $\caG'$.
Let $G $ be the quasi inverse of $F$ from $\caG'$ to $\caC'$.
Then we have a functor $\widehat{G}= G H$ from the path category $\bfP(Q')$ to $\caC'$ sending vertices to the indecomposables and arrows to the representatives of a basis of $J_{\caC}(X,Y)/J_{\caC}^2(X,Y)$ with kernel the ideal generated by ${\rho'}^{\perp}$.

We consider the case that $Q'$ is an initial full subquiver of $Q$, the other case is proven dually.

Note that $Q'$ contains a complete $\tau$-slice of $\olQ^{op}$, if for all $t$, the images of the action of $\tau^{-t}$ on the complete $\tau$-slice in $Q'$ remain in $Q'$, then $Q = Q'$ and the proposition holds.
Otherwise, let $Q''$ be a complete $\tau$-slice of $Q$ in $Q'$ such that $\tau^{t} Q'' $ are in $Q'$ for $t\ge  0$, and $\tau^{-1} Q''$ is not in $Q'$.
Consider the order of $\olQ^{op}$ induced by a path order of $Q''_0 =\{ u_1,\ldots, u_l\}$.
Since $Q'$ is an initial full subquiver of $Q$, for each indecomposable $X$ in $\caC$, if $X$ is not in $\caC'$, then we have that $F(X) \simeq \olG e_{\tau^{-t} u}$ for $u\in Q_0''$ and some $t > 0$, by (i).

For an $r\ge 0$, define  \eqqcnn{QCr}{Q_{\caC}(r)} as the full subquiver of $\olQ$ with vertex set  \eqqcn{vertexQCr}{Q_{\caC,0}(r)= \{i \in \olQ_0 | d_{Q''}(i)\ge -r\}\cup Q'_0.}
Then $Q'= Q_{\caC}(0)$.
Clearly, each $Q_{\caC}(r)$ is convex and the vertex set \eqqcn{vertexQC}{Q_{\caC,0} = \cup_{r} Q_{\caC,0} (r).}
Let \eqqcn{bcaGrCr}{\bcaG(r) = \add\{\olG e_{j}| j\in Q_{\caC,0}(r) \} \mbox{ and } \caC(r) = \add \{X| F(X) \simeq \olG e_{j}, j\in Q_{\caC,0}(r) \}.}
$\bcaG(r)$ is the bound path category of the bound quiver $Q_{\caC}(r) = (Q_{\caC,0}(r), Q_{\caC,1}(r), \rho_{\caC}^{\perp}(r) )$.
We have that $Q_{\caC}(0) = Q'$ and $\caC' $ is equivalent to $\caG'$, that is, $\caC(0)$ is equivalent to $\bcaG(0)$, and both are equivalent to the bound path category $\caP(Q_{\caC}(0))$.
We regard $\widehat{G}$ as a functor from $\mathbf P(Q_{\caC}(0))$ to $\caC(0)$ which induces an equivalence from $\caP(Q_{\caC}(0)$ to $\caC(0)$.

We extend functor $\widehat{G}$ inductively, assume that $\widehat{G} (j) = X_j$ is defined for the path category $\bfP(Q_{\caC}(r-1))$ of the quiver $Q(r-1)$ such that $F \widehat{G} (j) = \olG e_j$, and $\widehat{G}$  induces an equivalence  $G$ from $\bcaG(r-1)$ to $\caC(r-1)$ such that  $$  \hhom_{\bcaG} (\olG e_j, \olG e_{j'}) = \hhom_{\caC} (G(\olG e_j), G(\olG e_{j'})) = \hhom_{\caC} (X_j, X_{j'}) $$ for $\olG e_j, \olG e_{j'}$ in $\bcaG(r-1)$.

If $r\ge 1$, and assume that $d_{Q''}(i) = -r$.
If $i \in Q'_0$, then $Q_{\caC}(r) = Q_{\caC}(r-1)$ and we are done.
If $i \not\in Q'_0$, then $i=\tau^{-s}u$ for some $u \in Q''_0$ and $\tau^{1-s}u \in Q_{\caC,0}(r-1)$ since $Q''$ is a complete $\tau$-slice and $Q$ is convex.
So $\olG e_{\tau^{1-s}u } \in \bcaG(r-1)$.

By (v), we have that $\hhom_{\caC}(X_i, X_j) =0$ if $F(X_i) =\olG e_i$ and $F(X_{j}) =\olG e_{j}$ for any $i\neq j \in Q_{\caC,0}(r)$, since $d_{Q''}(i) < d_{Q''}(j)$.
So \eqqcn{homeq}{\hhom_{\caC}(X_i, X_j) = \hhom_{\bcaG}(\olG e_i, \olG e_j)=0,} since there is no path in $\olQ^{op}$ from $i$ to $j$.

Now we consider $\hhom_{\caC}(X_j, X_i)$.

Since $\olG$ is the quadratic dual of an $n$-translation algebra, we have a sink sequence
\eqqc{sinkinG}{\olG e_{\tau^{1-r}u } \lrw \cdots \lrw \bigoplus_{(j',2)\in H_{i}}  ( \olG e_{j'} )^{\mu_i(j',2)} \slrw{F(\psi)} \bigoplus_{(j',1)\in H_{i}}  ( \olG e_{j'} )^{\mu_i(j',1)} \slrw{F(\phi)} \olG¡¡e_{i} } in $\bcaG$, by Lemma 7.1 of \cite{g16}.
So by condition (iv), we have a sink sequence \eqqc{sinkinC}{ X_{\tau^{1-r}u} \lrw \cdots \lrw \bigoplus_{(j',2)\in H_{i}}  X_{j'}^{\mu_i(j', 2)} \slrw{\psi} \bigoplus_{(j',1)\in H_{i}}  X_{j'}^{\mu_i(j' ,1)} \slrw{\phi} X_{i} } in $\caC$ with all terms except the last one in $\caC(r-1)$.
So we have an exact sequence
\eqqc{exactseq1}{\arr{ll}{ \Hom_{\caC}(X, X_{\tau^{r-1}u}) \lrw \cdots \\ \qquad \lrw \bigoplus_{(j',t)\in H_{i}}  \Hom_{\caC}(X, X_{j'})^{\mu_i(j', t)} \lrw \cdots \\ \qquad \lrw\bigoplus_{(j',2)\in H_{i}}  \Hom_{\caC}(X, X_{j'})^{\mu_i(j', 2)} \\ \qquad \lrw \bigoplus_{(j',1)\in H_{i}}  \Hom_{\caC}(X, X_{j'})^{\mu_i(j', 1)}\\ \qquad\qquad \lrw J_{\caC}(X, X_{i} )\lrw 0 }}
for any $X \in \caC$.

Now take $X= X_j$ for vertex $j$ with $(j,1)\in H_i$ in \eqref{exactseq1}, then for $t\ge 1$ and $(j',t)\in H_i$ with $j\neq j'$, we have \eqqcn{homeqzero}{\Hom_{\caC}(X_j, X_{j'}) = \Hom_{\bcaG}(\olG e_j, \olG e_{j'}) =0,} by inductive assumption.
So we get exact sequence \eqqcn{eseq}{0 \lrw \Hom_{\caC}(X_j, X_{j})^{\mu_i(j, 1)} \slrw{\phi^*} J_{\caC}(X_j, X_{i} ) \lrw 0,} and thus \eqqcn{homeqc}{ \arr{rl}{ \Hom_{\bfP(Q(l))}(j,i) \simeq & (e_i \olG_1 e_j)  \simeq \Hom_{\bcaG}(\olG e_j,\olG e_i)\simeq  \Hom_{\caC} (X_j, X_i)\\ \simeq & J_{\caC}(X_j,X_i) \simeq J_{\caC}(X_j,X_i)  / J^2_{\caC} (X_j, X_i) .}}
Set $\widehat{G}(i) =X_i$.
This shows that \eqqcn{dimeq}{\dmn_k J_{\caC}(X_j,X_i)  / J^2_{\caC} (X_j, X_i) =\mu_i(j, 1) } is exactly the number of the arrows from $j$ to $i$, and the images \eqqcn{arrowsbasis}{\alpha_1(ji), \ldots, \alpha_{\mu_i({j,1})}(ji)} of the standard basis of $\Hom_{\caC}(X_j, X_{j})^{\mu_i(j, 1)}$ under $\phi^*$ form a basis of the space $J_{\caC} (X_j,X_i)  / J^2_{\caC} (X_j, X_i)$.
By Lemma 6.4 of \cite{i11} and Proposition 3.5 of \cite{birsm11}, $\widehat{G}$ is extended to a functor from the path category $\bfP(Q(r))$ to $\caC(r) = \add \{\widehat{G}(j)| j \in Q(r)\}$  sending the arrows from $i$ to $j$ to elements in $J_{\caC} (X_j,X_i)$ whose images form a basis of $J_{\caC} (X_j,X_i) / J^2_{\caC} (X_j, X_i)$.

Now take $X= X_j$ for vertex $j$ with $(j,2)\in H_i$ in \eqref{exactseq1}, since \eqref{sinkinG} and \eqref{sinkinC} are sink sequences in $\bcaG(r)$ and $\caC(r)$, respectively.
We get the following commutative diagram with exact rows,
\small
$$\arr{rcl}{0 \lrw \Hom_{\caC}(X_j, X_{j})^{\mu_i(j ,2)} &\slrw{\psi^*} \bigoplus \Hom_{\caC}(X_j, X_{j'})^{\mu_i(j', 1)} \slrw{\phi^*} & J_{\caC}(X_j, X_i )\lrw 0\\
\downarrow \simeq \qquad&\downarrow \simeq & \qquad\downarrow \\
0 \lrw \Hom_{\bcaG}(\olG e_j, \olG e_{j})^{\mu_i(j, 2)} &\slrw{F(\psi)^*} \bigoplus  \Hom_{\bcaG}(\olG e_j, \olG e_{j'})^{\mu_i(j', 1)} \slrw{F(\phi)^*} & J_{\bcaG}(\olG e_j, \olG e_i )\lrw 0.\\
}$$
\normalsize
By inductive assumption, the first two vertical maps are  isomorphisms.
So the last vertical map is also isomorphism, by the five lemma.
Comparing the images of $\phi^*\psi^*$ and $F(\phi)^*F(\psi)^*$, it follows from (2) of Lemma \ref{category:relation} that the kernel of $\widehat{G}$ is generated by $e_j\rho^{\perp}_{\caC}(r) e_i$, the same as for the bound quiver category $\bcaG(r)$.

Thus both $\bcaG(r)$ and $\caC(r)$ are equivalent to the quotient category of the path category $\bfP(Q_{\caC}(r))$ modulo the ideals generated by the same relation set,  so they are equivalent.
Especially, we have that \eqqcn{homeqb}{\Hom_{\caC}(X_j, X_{j'}) \simeq  \Hom_{\bcaG}(\olG e_j, \olG e_{j'})} if $F(X_j) =\olG e_j, F(X_{j'}) =\olG e_{j'},$ for any $j, j' \in Q_{\caC}(r)$.

This proves that $F$ induces an equivalence from $\caC$ to $\caG(\caC)$, by induction.
\end{proof}

This Proposition also  shows that $\caC$ is equivalent to the bound path category of the full bound subquiver $Q_{\caC}^{\perp}$ of $\olQ^{op,\perp}$.
And $Q_{\caC}^{\perp}$ is the Auslander-Reiten quiver of $\caC$.

The following corollary is obvious.

\begin{cor}\label{orlovC}
With the induced order on $\caC$ as in the proof of Proposition \ref{relation:local}, $\caC$ is an Orlov category and  $F$ preserves the order for the indecomposable objects.
\end{cor}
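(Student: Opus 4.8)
The plan is to transport the Orlov structure along the equivalence $F: \caC \to \caG(\caC)$ furnished by Proposition \ref{relation:local}, where $\caG(\caC)$ is by construction a full subcategory of $\bcaG$. The order on $\caC$ named in the corollary is the one induced by the complete $\tau$-slice $Q''$ appearing in that proof: for an indecomposable $X$ with $F(X) = \olG e_i$ (well defined by condition (i)), one sets $deg(X) = d_{Q''}(i)$. With this definition $deg(X) = deg(F(X))$ holds tautologically, which is already the assertion that $F$ preserves the order on indecomposable objects.

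First I would observe that, since $Q''$ is a complete $\tau$-slice of $\olQ^{op}$, Proposition \ref{orlov} applied with $Q''$ in place of $Q$ shows that $\bcaG$ is an Orlov category with the degree $d_{Q''}$. A full subcategory of an Orlov category, equipped with the restricted degree function, is again an Orlov category; hence $\caG(\caC) \subseteq \bcaG$ is Orlov with degree $d_{Q''}$.

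It then remains to verify the three Orlov axioms for $(\caC, deg)$, each of which follows at once from the fact that $F$ is an equivalence onto the full subcategory $\caG(\caC)$. Hom-finiteness is part of the hypotheses on $\caC$. For any indecomposable $X$ we have $\End_{\caC}(X) \iso \End_{\bcaG}(F(X)) \iso k$, since $F$ is fully faithful and $\bcaG$ is Orlov. Finally, if $X \not\simeq Y$ are indecomposable with $deg(X) \le deg(Y)$, then $F(X) \not\simeq F(Y)$ with $deg(F(X)) \le deg(F(Y))$, so $\hhom_{\caC}(X,Y) \iso \hhom_{\bcaG}(F(X), F(Y)) = 0$ by the degree condition in $\bcaG$. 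As noted above $deg(X) = deg(F(X))$ by construction, so $F$ preserves the order on indecomposables, as claimed.

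There is no genuine obstacle here, which is why the statement is recorded as obvious; it is a pure transport-of-structure argument. The only points deserving a word of care are that the vanishing of $\hhom$ in the \emph{equal-degree} case must be read off from the Orlov property of $\bcaG$ rather than from condition (v) of Proposition \ref{relation:local}, which records only the strict-inequality case; and that one should use the degree $d_{Q''}$ attached to the slice $Q''$ from the proof of Proposition \ref{relation:local} (rather than the earlier $d_Q$), so that the transported order coincides precisely with the one named in the corollary.
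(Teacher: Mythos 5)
Your proposal is correct and is precisely the transport-of-structure argument the paper has in mind when it records the corollary as obvious without proof: define $deg(X)=d_{Q''}(i)$ for $F(X)=\olG e_i$, note that $\caG(\caC)$ is a full subcategory of the Orlov category $\bcaG$ (Proposition \ref{orlov} applied to the slice $Q''$), and transport the three axioms along the equivalence $F$ established in Proposition \ref{relation:local}. Your closing caveats (that condition (v) alone covers only the strict-inequality case, and that the relevant degree is $d_{Q''}$ rather than $d_Q$) are sensible, though the equal-degree case is in fact vacuous since $d_{Q''}$ is injective on the vertices of $\olQ^{op}$.
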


\section{$n$-preprojective component, $n$-preinjective component and truncations of $\zZ|_{n-1} Q^{op,\perp}$}
\label{sec:ttn}

Now we study the $n$-preprojective and $n$-preinjective components of an $n$-slice algebra $\GG$ defined by an acyclic bound quiver $Q^{\perp}=(Q_0,Q_1,\rho^{\perp})$.
Let $\olG$ be the algebra defined by the bound quiver $\zZ|_{n-1} Q^{\perp} $, we have that $\GG \simeq  e \olG e $ for $e= \sum_{j\in Q_0} e_j $.

\begin{lem}\label{globaldim}
$\GG$ is Koszul algebra of global dimension $ n$.
\end{lem}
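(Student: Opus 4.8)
The plan is to build, for every vertex $i\in Q_0$, the minimal projective resolution of the simple $\GG$-module $S_i$ by restricting the Koszul complex of $\olG$ along the idempotent $e=\sum_{j\in Q_0}e_j$, and then to read off Koszulity from the resulting linearity and the global dimension from its length.

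First I would record that $\GG=kQ^{\perp}/(\rho^{\perp})$ is quadratic, being the quadratic dual of the quadratic $n$-properly-graded algebra $\LL$, and that $\GG\simeq e\olG e$ with $\olG$ the quadratic dual of the $n$-translation algebra $\olL$ defined by $\zZ|_{n-1}Q^{\perp}$. For each $i\in Q_0$, Proposition \ref{KoszulinG} provides the Koszul complex $\xi_i$; as used in the proof of Lemma \ref{nass:objinsubcat} (Lemma 7.1 of \cite{g16}), $\xi_i$ is exact except at its leftmost term $\olG e_{\tau^{-1}i}$, where the kernel of $f$ is $\olG_q e_{\tau^{-1}i}$, while its cokernel at $M_0=\olG e_i$ is $S_i$. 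This yields an exact sequence
\[
0\lrw \olG_q e_{\tau^{-1}i}\lrw \olG e_{\tau^{-1}i}\slrw{f} M_n\lrw\cdots\lrw M_0\lrw S_i\lrw 0
\]
in which each $M_t=\bigoplus_{(j,t)\in H^i_0}(\olG e_j)^{\mu^i(j,t)}$ is generated in internal degree $t$.

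Next I would apply the exact functor $X\mapsto eX$. The decisive observation concerns $e\olG e_j=\bigoplus_{i'\in Q_0}e_{i'}\olG e_j$: a morphism $\olG e_j\to\olG e_{i'}$ is a bound path from $j$ to $i'$ in $\zZ|_{n-1}Q^{\perp}$, and since every arrow of $\zZ|_{n-1}Q^{\perp}$ weakly increases the copy index (the returning arrows raise it by one and the remaining arrows preserve it), there is no path from a vertex of a later copy to a vertex of $Q_0$; this is the content of the Orlov grading $d_Q$ of Section \ref{sec:bpc}. Hence $e\olG e_j=\GG e_j$ when $j\in Q_0$ and $e\olG e_j=0$ when $j$ lies in a later copy; in particular, since $\tau^{-1}i=(i,1)$ sits in the next copy, $e\olG e_{\tau^{-1}i}=0=e\olG_q e_{\tau^{-1}i}$. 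Applying $e(-)$ therefore collapses the tail and leaves the exact sequence
\[
0\lrw eM_n\lrw\cdots\lrw eM_0=\GG e_i\lrw S_i\lrw 0,
\]
with each $eM_t=\bigoplus_{(j,t)\in H^i_0,\,j\in Q_0}(\GG e_j)^{\mu^i(j,t)}$ projective and generated in degree $t$; minimality survives because a radical (positive-degree) map between slice projectives of $\olG$ remains a radical map in $\GG$. This is a linear minimal projective resolution of $S_i$, so $\GG$ is Koszul.

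Finally, to see that the global dimension is exactly $n$ I would check $eM_n\neq 0$. The socle $\tau^{-1}i$ at level $n+1$ is reached only through returning arrows $(\beta_p,0)\colon(t(p),0)\to(i,1)$ attached to maximal paths $p$ of $Q$ starting at $i$, so the vertex $(t(p),0)$ lies in $H^i_0$ at level $n$ and in $Q_0$. Because an $n$-properly-graded quiver has no sinks (for $n\geq 1$), $i$ starts at least one such maximal path, so $eM_n\neq 0$ and $\pd S_i=n$; taking the supremum over $i\in Q_0$ yields $\gl\GG=n$. I expect the main obstacle to be the bookkeeping of the restriction step---showing that the tail of $\xi_i$ is annihilated while exactly the copy-$0$ terms survive as indecomposable projectives---which is precisely where the acyclic, copy-graded (Orlov) structure of $\zZ|_{n-1}Q^{\perp}$ is essential; linearity, minimality, and the non-vanishing at level $n$ are then routine.
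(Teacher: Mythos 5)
Your strategy --- restricting the Koszul complexes of $\olG$ along $e=\sum_{j\in Q_0}e_j$ to produce linear minimal projective resolutions of the simple $\GG$-modules --- is genuinely different from the paper's proof, which simply quotes Propositions 6.3 and 6.5 of \cite{g20} (the indecomposable projectives of $\LL$ have Loewy length at most $n+1$, the longest bound paths of $Q$ have length exactly $n$, and $\LL$ is Koszul) and then transfers Koszulity and the global dimension to the quadratic dual $\GG$ via Theorems 2.6.1 and 2.10.2 of \cite{bgs96}. The core of your construction is sound: $e\olG e_{\tau^{-1}i}=0$ because every arrow of $\zZ|_{n-1}Q$ weakly increases the copy index, so no bound path returns from $(i,1)$ to copy $0$; the surviving terms $eM_t$ are projective $\GG$-modules generated in degree $t$; and the differentials remain radical maps after applying $e(-)$. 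This yields a linear minimal projective resolution of each $S_i$ of length at most $n$, hence Koszulity and $\gl\GG\le n$.

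The gap is in the final step. It is false that an $n$-properly-graded quiver has no sinks: $Q=A_2$ ($1\to 2$) is $1$-properly-graded and $2$ is a sink; more generally the terminal vertex of any maximal bound path starts no arrow at all. Consequently it is not true that every vertex starts a maximal bound path of length $n$, and your claim $\pd S_i=n$ for every $i$ fails (for $kA_2$ one has $\pd S_2=0$, and for a sink $i$ the set $\{(j,t)\in H^i_0\mid j\in Q_0, t=n\}$ can be empty while the length-$n$ paths out of $(i,0)$ all cross a returning arrow into copy $1$ and die under $e$). What you actually need is only that \emph{some} simple have projective dimension exactly $n$, and that is immediate: by the definition of $n$-properly-graded, $Q$ has at least one maximal bound path $p$ of length $n$, and for $i=s(p)$ the vertex $(t(p),n)$ lies in $H^i_0$ with $t(p)\in Q_0$, so $eM_n\neq 0$ and minimality gives $\pd S_{s(p)}=n$, whence $\gl\GG=n$. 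With that one sentence repaired, your argument goes through.
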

\begin{proof}
Let $\LL$ be the quadratic dual of $\GG$, with the bound quiver $Q=(Q_0,Q_1,\rho)$.
By Proposition 6.3 of \cite{g20}, the indecomposable projective modules of $\LL$ have the Loewy length at most $ n+1$, and the  length of longest bound paths in $Q$ are exactly $n$.
By Proposition 6.5 of \cite{g20}, $\LL$ is a Koszul algebra.
So as its quadratic dual, $\GG$ is a Koszul algebra of global dimension $n$, by Theorem 2.10.2 and Theorem 2.6.1 of \cite{bgs96}.
\end{proof}

For a finite dimensional algebra $\GG$, the $n$-Auslander-Reiten translations of $\GG$ modules are introduced by Iyama (see \cite{i007,i07}), $$\tau_n = D\tr \OO^{n-1},\quad and \quad \tau_n^{-} =\tr D\OO^{1-n},$$ with the convention that $\ttn{0}=\ttn{-0}=\identity$.
Since $\gl \GG =n$, we have $\ttn{-1}  = \Ext_{\GG}^n(D -, \GG)$ and $\ttn{}  = D\Ext_{\GG}^n(-, \GG)$.

Let \eqqcn{}{\arr{lc}{\caM^+ =\caM^+ (\GG) =\add \{ \ttn{t} D\GG | t \ge 0\} ,& \mbox{and}\\  \caM^- =\caM^- (\GG) =\add \{ \ttn{-t} \GG | t \ge 0\}.}}
Call $\caM^+$ the {\em the $n$-preinjective components of $\Gamma$} and call its object $n$-preinjective module, call $\caM^-$ the {\em the $n$-preprojective  components of $\Gamma$} call its object $n$-preprojective module.
$\GG$ is called {\em $\ttn{}$-finite} if $ \caM^+$, or equivalently, $ \caM^-$ is of finite type, that is, has only finite many non-isomorphic indecomposable objects \cite{io13}.

Our aim in this section is to show that the Auslander-Reiten quiver of $n$-preprojective component and $n$-preinjective component in the category of  $\GG$-modules  are both truncations of $\zZ|_{n-1} Q^{op,\perp}$.

Note that $\caM^+$ and $\caM^-$ are  both hom-finite Krull-Schmidt categories, and indecomposables have the form $\ttn{t}D e_u\GG$ and $\ttn{-t} \GG e_u$, respectively, for integer $t\ge 0$ and $u\in Q_0$.
Especially, $\End_{} X \simeq k$ for any indecomposable object $X$ in $\caM^{\pm}$.
Thus for $t\ge 0$, the correspondence \eqqcn{}{\Theta^-: \ttn{-t}\GG e_u \to \olG e_{\tau^{-t} u}} assigns objects  from $\caM^-$ to objects in $\bcaG^-$,  and the correspondence \eqqcn{}{\Theta^+: \ttn{t} D (e_u \GG)\to \olG e_{\tau^{t} u}} assigns objects from $\caM^+$ to objects in $\bcaG^+$.

Since $Q^{op}$ is acyclic,  consider the order of $\zzs{n-1} Q^{op}$ induced by the path order of $Q^{op} =\{u_1,\ldots,u_m\}$.
This is the order induced by the map $d_{Q^{op}}:(u_t,r) \to -rl+t$ from $\zzs{n-1}Q^{op}_0$ to the integer $\zZ$.

For $m\ge 0$ and  $u \in Q^{op}_0$, let $Q^+_{(u,m)}$ be the full subquiver of $\zZ|_{n-1} Q^{op}$ with vertex set $$(Q^+_{(u,m)})_0 =\{(u',t)|0\le d_{Q^{op}} (u',t) \le d_{Q^{op}}(u,m)\},$$ and let $Q^-_{(u,m)}$ be the full subquiver of $\zZ|_{n-1} Q$ with vertex set $$(Q^-_{(u,m)})_0 = \{(u',t)|l \ge d_{Q^{op}}(u',t) \ge d_{Q^{op}}(u,m)\}.$$
Let
\eqqcn{cats}{\arr{l}{\caM^+_{(u,m)} = \{\ttn{t}D\GG e_{u'}| (u',-t)\in (Q^+_{(u, m)})_0\} \\
\caM^-_{(u,m)} = \{\ttn{-t}\GG e_{u'}| (u',t)\in (Q^-_{(u, m)})_0\}\\
\bcaG^\pm_{(u, m)} = \{\olG e_{\tau^{-t} u'}| (u',t)\in (Q^\pm_{(u, m)})_0\}.}}
For $X= \ttn{t} D\GG e_u$ in $\ind \caM^+$, define \eqqcn{}{ \deg(X) = d_{Q^{op}}(u,-t),} and for $ X= \ttn{-t} \GG e_u$ in $\ind\caM^-$, define \eqqcn{}{ \deg(X) = d_{Q^{op}}(u, t).}

\begin{lem}\label{nobackmap}
Assume that $X, Y$ are indecomposable in $\caM^\pm$ such that $\deg (X) < \deg(Y)$, then $\hhom_{\caM}(X, Y) =0$.
\end{lem}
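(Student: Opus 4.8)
The plan is to establish the vanishing for $\caM^-$ and deduce the $\caM^+$ case from it: duality $D$ together with Proposition \ref{nslice:opp} identifies $\caM^+(\GG)$ with $\caM^-(\GG^{op})$ while reversing the degree, so $\caM^+$ follows by applying the $\caM^-$ argument to the $n$-slice algebra $\GG^{op}$. Write $X = \ttn{-s}\GG e_u$ and $Y = \ttn{-t}\GG e_v$, so that $\deg(X) = d_{Q^{op}}(u,s) = -sl + i_u$ and $\deg(Y) = -tl + i_v$, where $i_u, i_v \in \{1,\dots,l\}$ record the positions of $u,v$ in the chosen path order of $Q^{op}$. First I would unwind $\deg(X) < \deg(Y)$: it reads $(t-s)l < i_v - i_u$, and because $|i_v - i_u| < l$ this leaves exactly two possibilities, namely $s > t$, or $s = t$ with $i_u < i_v$. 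Only these two regimes need be treated.

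Next I would move the computation into the bounded derived category $\DbG$. By Lemma \ref{globaldim} we have $\gl \GG = n$, so the Nakayama functor $\nu_n := D\RHom_\GG(-,\GG)[-n]$ is an autoequivalence of $\DbG$. The crucial input is that for an $n$-slice algebra each $\ttn{-j}\GG e_w$ is a module and coincides, as an object of $\DbG$, with $\nu_n^{-j}\GG e_w$ placed in cohomological degree $0$; this is the higher-hereditary feature built into the $\zZ|_{n-1}Q$ construction. Granting this, $\hhom_{\caM}(X,Y) = \Hom_\GG(X,Y) = \Hom_{\DbG}(\nu_n^{-s}\GG e_u,\, \nu_n^{-t}\GG e_v)$, and applying the autoequivalence $\nu_n^{\,s}$ rewrites the right-hand side as $\Hom_{\DbG}(\GG e_u,\, \nu_n^{\,s-t}\GG e_v)$.

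In the regime $s = t$ this is just $\Hom_\GG(\GG e_u,\GG e_v) \iso e_u\GG e_v$, the span of the residue classes of paths from $u$ to $v$ in $Q^{op}$; the defining property of the path order forbids such paths once $i_u < i_v$, so the Hom-space is zero. In the regime $s > t$, put $m = s - t \ge 1$. Here I would prove by induction on $m$ that $\nu_n^{\,m}\GG$ has cohomology concentrated in degrees $[n, mn]$. The base case is $\nu_n\GG = D\GG[-n]$, living in degree $n$; for the step one uses that $\RHom_\GG(-,\GG)$ raises the top cohomological degree by at most $\gl\GG = n$ while $D$ reflects the cohomological range, which propagates the bound. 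Since $\GG e_u$ is a projective module concentrated in degree $0$ whereas $\nu_n^{\,m}\GG e_v$ has cohomology only in degrees $\ge n \ge 1$, we get $\Hom_{\DbG}(\GG e_u, \nu_n^{\,m}\GG e_v) = 0$. Combining the two regimes yields $\hhom_{\caM}(X,Y) = 0$.

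The step I expect to be the main obstacle is the identification in the second paragraph: one must know that $\ttn{-j}\GG$ is genuinely the degree-$0$ cohomology of $\nu_n^{-j}\GG$ with no higher terms, i.e. that $\GG$ is $n$-hereditary, for only then do the honest module Hom and the derived Hom agree and does the rewriting via $\nu_n$ apply. The amplitude bookkeeping for $\nu_n^{\,m}\GG$ is the remaining technical point, but it is a formal consequence of $\gl\GG = n$ once the inductive cohomological range is set up correctly.
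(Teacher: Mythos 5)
Your degree bookkeeping (the reduction to the two regimes $s>t$, and $s=t$ with $i_u<i_v$) and the cohomological amplitude computation for $\nu_n^{\,m}\GG$ are both correct, but the argument hinges on the step you yourself flag and do not justify: the identification of $\ttn{-j}\GG e_w$ with $\nu_n^{-j}\GG e_w$ as an object of $\Db(\GG)$ concentrated in degree $0$, which is what lets you replace $\hhom_{\GG}(X,Y)$ by a derived Hom and then apply the autoequivalence $\nu_n^{\,s}$. For a general acyclic $n$-slice algebra this is not available: $\gl\GG=n$ only gives $\ttn{-j}\GG=\Ho^0(\nu_n^{-j}\GG)$, and the two Hom-spaces can differ as soon as $\nu_n^{-j}\GG$ has cohomology outside degree $0$. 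Indeed, the compatibility $\hhom_{\caU_0^-}(\nu_n^{-s}\GG e_i,\nu_n^{-t}\GG e_j)\simeq\hhom_{\GG}(\ttn{-s}\GG e_i,\ttn{-t}\GG e_j)$ is exactly what the paper isolates as Condition \ref{conditionbb}, imposed only in Section \ref{sec:nu} and verified there only for $n$-representation infinite algebras (Corollary \ref{taunclosure:nass}); it certainly cannot be taken for granted in the $\ttn{}$-finite situations (such as Dynkin path algebras for $n=1$) that Lemma \ref{nobackmap} must also cover. Since the lemma is used in the module-category arguments of Section \ref{sec:ttn} (e.g.\ in the proof of Lemma \ref{inductivestep}), where only global dimension $n$ and acyclicity are in force, your proof as written establishes it for a strictly smaller class of algebras than the one in which it is applied, and would make the logic circular relative to the later derived-category results.

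The paper's own proof sidesteps all of this: the case of $\caM^+$ is quoted from (e) of Lemma 2.4 of \cite{i11}, a purely module-theoretic vanishing statement valid for any finite-dimensional algebra of global dimension at most $n$ (proved there from $\tau_n=\tau\Omega^{n-1}$ and $\tau_n^{-}=\tau^{-}\Omega^{-(n-1)}$ rather than from $\nu_n$), and the case of $\caM^-$ then follows by duality. If you want a self-contained argument in the stated generality, that is the route to take; your derived-category computation does become a correct and rather clean proof, but only under the additional standing hypothesis that $\GG$ is $n$-representation infinite.
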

\begin{proof}
The case for $\caM^+$ follows from (e) of Lemma 2.4 of \cite{i11}, and the case for $\caM^-$ follows from the duality.
\end{proof}

As a corollary, we have the following proposition.

\begin{pro}\label{orlovM}  $\caM^+$ and $\caM^-$ are Orlov categories.
\end{pro}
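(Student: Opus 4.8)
The plan is to verify the three defining conditions of an Orlov category for $\caM^+$ and $\caM^-$, using the degree functions already introduced just before the statement. Recall that an Orlov category requires (1) all Hom-spaces finite-dimensional, (2) $\End_{\caA}(X)\simeq k$ for each indecomposable $X$, and (3) the vanishing $\hhm_{\caA}(X,X')=0$ whenever $\deg(X)\le \deg(X')$ and $X\not\simeq X'$. My strategy is to observe that conditions (1) and (2) have essentially been recorded already in the text preceding the statement, while condition (3) is exactly the content of Lemma~\ref{nobackmap} together with a short argument handling the boundary case $\deg(X)=\deg(X')$.

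First I would dispose of conditions (1) and (2). The remark immediately before Lemma~\ref{nobackmap} states that $\caM^+$ and $\caM^-$ are both Hom-finite Krull--Schmidt categories and that $\End X\simeq k$ for any indecomposable object $X$ in $\caM^{\pm}$. This gives (1) and (2) at once, so no further work is needed there.

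The heart of the proof is condition (3), and here I would split into two cases according to whether $\deg(X)<\deg(X')$ or $\deg(X)=\deg(X')$. The strict inequality case is precisely Lemma~\ref{nobackmap}, which gives $\hhm(X,X')=0$. For the equality case, I would argue as follows: the degree function $\deg$ on $\ind\caM^{\pm}$ is, by its definition via $d_{Q^{op}}(u,r)=-rl+t$, an injection from the set of iso-classes of indecomposables into $\zZ$, because each indecomposable is of the form $\ttn{t}D\GG e_u$ (resp.\ $\ttn{-t}\GG e_u$) and distinct pairs $(u,r)$ give distinct values of $d_{Q^{op}}$. Hence $\deg(X)=\deg(X')$ forces $X\simeq X'$, so there are no distinct indecomposables of equal degree, and condition (3) is vacuous in that case. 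Combining the two cases yields (3) in full.

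The main obstacle I anticipate is the injectivity of $\deg$ on iso-classes, i.e.\ checking that the assignment $\ttn{t}D\GG e_u \mapsto (u,-t)$ (resp.\ $\ttn{-t}\GG e_u \mapsto (u,t)$) is well-defined and injective on indecomposables. This relies on the identification of $\caM^{\pm}$ with the subquivers $\bcaG^{\pm}$ of $\zZ|_{n-1}Q^{op,\perp}$ via $\Theta^{\pm}$ and on the fact that $\ttn{\pm t}$ applied to distinct projective or injective indecomposables gives distinct (and nonzero) indecomposables; by Lemma~\ref{globaldim} together with the standard theory of $n$-Auslander--Reiten translations for an $n$-slice algebra, the modules $\ttn{t}D\GG e_u$ are nonzero and pairwise non-isomorphic for $t\ge 0$, which is what makes $d_{Q^{op}}$ a genuine degree function. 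Once this is granted, everything else is a direct appeal to the preceding lemmas, so the proof is short.
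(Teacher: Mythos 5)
Your proof is correct and follows essentially the same route as the paper, which obtains the proposition directly as a corollary of Lemma \ref{nobackmap}: conditions (1) and (2) are already recorded in the remark preceding that lemma, and the strict-inequality case of (3) is exactly the lemma's content. One caution: the ``main obstacle'' you identify is not actually needed and your proposed fix is circular --- since $d_{Q^{op}}(u_t,r)=-rl+t$ with $1\le t\le l$ is injective on pairs $(u_t,r)$, two indecomposables of equal degree are represented by the same pair and hence coincide, making the equality case of (3) vacuous without any appeal to the equivalences $\Theta^{\pm}$, which are established only later (Theorem \ref{equiv}) by arguments that themselves use the Orlov property.
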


For a subcategory $\caC$ of $\mmod \GG$, let $^{\perp_n} \caC$ be the full subcategory of $\mmod \GG$ with objects $X$ satisfying $\Ext_{\GG}^t(X,\caC)=0$ for $1 \le t < n$ and let $\caC^{\perp_n} $ be the full subcategory of $\mmod \GG$ with objects $X$ satisfying $\Ext_{\GG}^t(\caC, X)=0$ for $1 \le t < n$.
A subcategory  $\caC$ of $\mmod \GG$ is called {\em $n$-rigid} if $\caC \subset \caC^{\perp_n} \cap ^{\perp_n}\caC$.

Clearly, both $\caM^+_{(u_l,0)}= \add D\GG$ and $\caM^-_{(u_1,0)}= \add \GG$ are $n$-rigid and are isomorphic to  $\bcaG_0 = \bcaG^+_{(u_l,0)}= \bcaG^-_{(u_1,0)} =\add \GG$, which is isomorphic to the quotient category $\caP(Q^{op,\perp})$ of bound path category defined  by the quiver $Q^{op}$ modulo the ideal generated by the relation $\rho^{op,\perp}$.

\medskip

Now we consider $\caM^-$, the case for $\caM^+$ follows from the duality.

\medskip

We have clearly that $\Theta^-_{(u_1,0)}$ sending $ \GG e_u \to \olG e_u$ defines an isomorphism from $\caM^-_{(u_1,0)}$ to $\bcaG^-_{(u_1,0)}$, with obvious inverse $\Psi^-_{(u_1,0)}$ sending $ \olG e_u \to \GG e_u$.

\medskip

Consider the following conditions for $(u,m)$:

\begin{cdns}\label{conditionaa}
\begin{enumerate}
\item\label{uma}$\Theta^-_{(u,m)}$ is an equivalence from $\caM^-_{(u,m)}$ to $\bcaG^-_{(u,m)}$ sending $ \ttn{-t}\GG e_{h}$  to $\olG e_{\tau^{-t} h}$.
    We have \eqqcn{isoGGm}{\ttn{-r}\GG e_h \simeq e\olG e_{\tau^{-r} h}} as $\GG$-modules and for $\ttn{-r}\GG e_h, \ttn{-r'}\GG e_{h'} $ in $\caM^-_{(u,m)} $, \eqqcn{isohom}{\hhom_{\caM^-_{(u,m)}} (\ttn{-r}\GG e_h, \ttn{-r'}\GG e_{h'}) \simeq \hhom_{\bcaG^-_{(u,m)}}(\olG e_{\tau^{-r}h}, \olG e_{\tau^{-r'} h'}).}

\item\label{umb}If $d_{Q^{op}}(u,m)\le d_{Q^{op}}(h,r) < d_{Q^{op}}(h,r-1) \le d_{Q^{op}}(u_l,0)$ we have an $n$-almost split sequence \small\eqqcn{nnasseq}{\arr{rl}{0 \slrw{} \ttn{-r+1}\GG¡¡e_{h} \slrw{} \bigoplus\limits_{(\tau^{-r'}h', n) \in H_{\tau^{-r}h}}  ( \ttn{-r'}\GG e_{h'} )^{\mu^{\tau_{-r}h} (\tau^{-r'}h', n)} & \slrw{} \cdots \\ \slrw{}  \bigoplus\limits_{(\tau^{-r'}h', 1) \in H_{\tau^{-r}h}}  ( \ttn{-r'}\GG e_{h'} )^{\mu_{\tau^{-r}h} (\tau^{-r'}h', 1)} & \slrw{} \ttn{-r}\GG¡¡e_{h} \lrw 0.}}\normalsize
\end{enumerate}

\end{cdns}

Clearly, Condition \ref{conditionaa} holds for $(u_1,0)$.
We first prove the following lemma.

\begin{lem}\label{inductivestep} Assume that $Q^-(u,m) $ is $\tau$-mature and $\caM^-(u,m)$ is $n$-rigid.
If $d_{Q^{op}}(u_1,0)\ge d_{Q^{op}}(u',m') = d_{Q^{op}}(u,m)+1$ and Condition \ref{conditionaa} holds for $(u',m')$, then Condition \ref{conditionaa} holds for $(u,m)$.
\end{lem}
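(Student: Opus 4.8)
The plan is to mirror, in the concrete setting of the categories $\caM^-$ and $\bcaG^-$, the inductive step carried out in the proof of Proposition \ref{relation:local}, adjoining a single new indecomposable object at each stage. Write $i = \tau^{-m}u$ for the vertex of $\zZ|_{n-1}Q^{op}$ that is adjoined when passing from the truncation $(u',m')$ of degree $d_{Q^{op}}(u,m)+1$ to $(u,m)$; the new indecomposable of $\caM^-_{(u,m)}$ is $X = \ttn{-m}\GG e_u$, and $\Theta^-_{(u,m)}$ is required to send it to $\olG e_i$. Because $d_{Q^{op}}(u,m)$ is minimal among the degrees occurring in $\caM^-_{(u,m)}$, Lemma \ref{nobackmap} gives $\hhom_{\caM^-}(X,Y)=0$ for every $Y$ in $\caM^-_{(u',m')}$; hence the only morphism spaces left to determine are the incoming ones $\hhom_{\caM^-}(Y,X)$, which must be matched with $\hhom_{\bcaG}(\olG e_j,\olG e_i)$.

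The next point is to produce two sink sequences ending at the new object. The hypothesis that $Q^-(u,m)$ is $\tau$-mature, combined with Proposition \ref{GGnass} and the Koszul complexes of Proposition \ref{KoszulinG}, furnishes an $n$-almost split sequence in $\bcaG^-_{(u,m)}$ ending at $\olG e_i$; its remaining terms are indexed by the $\tau$-hammock $H_i$ of $\olQ^{op}$ (whose $n$-translation is $\tau^{-1}$) and all lie in $\bcaG^-_{(u',m')}$. On the module side, since $\gl\GG = n$ by Lemma \ref{globaldim} and $\caM^-(u,m)$ is $n$-rigid, Iyama's higher Auslander--Reiten theory supplies an $n$-almost split sequence in $\caM^-$ ending at $X$; concretely it is obtained by applying $\ttn{-1}$ to the $n$-almost split sequence ending at $\tau_n X=\ttn{-m+1}\GG e_u$ guaranteed by Condition \ref{conditionaa}\eqref{umb} for $(u',m')$ (the cone/cylinder construction of \cite{i11}). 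The degree bound of Lemma \ref{nobackmap} together with $\tau$-maturity forces every term of this sequence except $X$ into $\caM^-_{(u',m')}$, so that $\Theta^-_{(u',m')}$ applies to them.

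Applying the inductive equivalence $\Theta^-_{(u',m')}$ to the truncated module-side sink sequence yields, by Condition \ref{conditionaa}\eqref{uma} for $(u',m')$, a complex agreeing term by term with the truncation of the Koszul sink sequence \eqref{sinkinG}. Running the long exact sequence \eqref{exactseq1} of $\hhom_{\caM^-}(Y,-)$ attached to the sink sequence and comparing it, via the five lemma, with its counterpart in $\bcaG^-$ exactly as in the proof of Proposition \ref{relation:local}, one obtains $\hhom_{\caM^-}(Y,X)\simeq\hhom_{\bcaG}(\olG e_j,\olG e_i)$ for all $Y=\ttn{-r'}\GG e_{h'}$ and reads off the incoming arrows at $i$ and their relations from the second part of Lemma \ref{category:relation}. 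Since a sink sequence determines its last term as a cokernel, this simultaneously yields the module isomorphism $\ttn{-m}\GG e_u \simeq e\olG e_{\tau^{-m}u}$ and extends $\Theta^-_{(u',m')}$ to an equivalence $\Theta^-_{(u,m)}\colon\caM^-_{(u,m)}\to\bcaG^-_{(u,m)}$, establishing Condition \ref{conditionaa}\eqref{uma} for $(u,m)$.

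Finally, Condition \ref{conditionaa}\eqref{umb} for $(u,m)$ follows by transporting the $n$-almost split sequences of $\bcaG^-_{(u,m)}$ (which exist by $\tau$-maturity and Proposition \ref{GGnass}) back along the equivalence $\Theta^-_{(u,m)}$; by construction they acquire the $\tau$-hammock shape recorded in Condition \ref{conditionaa}\eqref{umb}. I expect the principal difficulty to lie in the middle steps: verifying that the module-side sequence produced from $n$-rigidity genuinely has the $\tau$-hammock shape of the Koszul complex, that all its terms remain inside the previously constructed truncation, and that the isomorphism $\ttn{-m}\GG e_u \simeq e\olG e_{\tau^{-m}u}$ can indeed be extracted from the two matched sink sequences. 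The remaining bookkeeping with the Orlov order and the five-lemma/relation argument is a direct transcription of the proof of Proposition \ref{relation:local}.
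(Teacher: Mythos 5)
Your overall architecture (build the sequence on the $\bcaG$ side from the Koszul complex, transport it through the inductive equivalence, compare the two long exact Hom-sequences, and finish with Proposition \ref{relation:local}) matches the paper. But there is a genuine gap at the step you yourself flag as the ``principal difficulty'': you assert that an $n$-almost split sequence in $\caM^-$ ending at $X=\ttn{-m}\GG e_u$ is supplied by Iyama's higher Auslander--Reiten theory and is obtained by applying $\ttn{-1}$ to the sequence ending at $\ttn{-m+1}\GG e_u$. Neither claim is justified: $n$-rigidity of $\caM^-_{(u,m)}$ alone does not produce $n$-almost split sequences ($\caM^-$ is not known to be $n$-cluster tilting or functorially finite at this stage --- establishing the existence and hammock shape of these sequences is precisely assertion (\ref{umb}) of Condition \ref{conditionaa} for $(u,m)$, i.e.\ part of what the lemma must prove), and $\ttn{-1}$ is not an exact functor, so it does not carry the previous $n$-almost split sequence to a new one. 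Your argument is therefore circular at its central point.

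The paper closes this gap differently: it never presupposes a module-side $n$-almost split sequence ending at $\ttn{-m}\GG e_u$. Instead it transports only the truncated source sequence \eqref{nnsourceinG} (which omits the last term) into $\caM^-_{(u',m')}$ via the quasi-inverse $\Psi^-_{(u',m')}$, and completes it to an exact sequence of $\GG$-modules \eqref{xzero} whose last term is the explicitly constructed module $Z_0=e\olG e_{\tau^{-m}u}$, obtained by applying $\Hom_{\olG}(\olG e,\mbox{ })$ to the Koszul complex \eqref{nnassinG}. The decisive work is then to show $Z_0\in {\caM^-_{(u,m)}}^{\perp_n}\cap{}^{\perp_n}\caM^-_{(u,m)}$ using Theorem 1.5 and 2.2.1 of \cite{i007} together with the $n$-rigidity hypothesis, and to run the arguments of Lemma 3.2 and Proposition 3.3 of \cite{i007} to conclude that \eqref{xzero} is simultaneously a source and a sink sequence and that $Z_0\simeq \ttn{-1}Z_{n+1}=\ttn{-m}\GG e_u$. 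The isomorphism $\ttn{-m}\GG e_u\simeq e\olG e_{\tau^{-m}u}$ is thus an output of this orthogonality argument, not something read off from matching two sink sequences whose existence you have not established. Without this block your induction does not close; the remainder of your proposal (vanishing of outgoing maps via Lemma \ref{nobackmap}, the five-lemma comparison, and the appeal to Lemma \ref{category:relation} and Proposition \ref{relation:local}) is consistent with the paper once that block is supplied.
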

\begin{proof}
Note that $\ind \caM^-_{(u,m)}$ is obtained by adding $\tau^{-m} \GG e_u$ to  $\ind \caM^-_{(u',m')}$.

Since $Q^{op}$ is a complete $\tau$-slice of $\olQ$, by Proposition \ref{KoszulinG}, we have a Koszul complex \eqqc{nnassinG}{\olG e_{\tau^{-m+1} u}\lrw \cdots \lrw Y_t \lrw \cdots \lrw Y_1 \lrw \olG e_{\tau^{-m} u},} with $Y_t \simeq \bigoplus_{(j,t)\in H^{\tau^{-m+1} u}}  (\olG e_h)^{\mu^{\tau^{-m+1} u}(j,t)}$.
This is an $n$-almost split sequence in $\bcaG^-_{(u,m)}$ since  $Q^-(u,m) $ is $\tau$-mature.
Applying the quasi inverse $\Psi^-_{(u',m')}$ of $\Theta^-_{(u',m')}$ on \eqqc{nnsourceinG}{\olG e_{\tau^{-m+1} u}\lrw \cdots \lrw Y_t \lrw \cdots \lrw Y_1,} the unique source sequence starting at $\olG e_{\tau^{-m+1} u}$ in $\bcaG_{(u',m')}$, we get a source sequence \eqqc{nnsource}{  \ttn{-m+1} \GG e_{u} \slrw{g_{n+1}} \cdots \slrw{g_{t+1}} X_t \lrw \cdots \slrw{g_1} X_1} in $\caM^-_{(u',m')} $ with \eqqcn{Xt}{ X_t \simeq \bigoplus_{(j,t)\in H^{\tau^{-m+1} u}}  (\ttn{-v(j)}\GG e_{u(j)} )^{\mu^{\tau^{-m+1} u}(j,t)},} here we have $j = (u(j), v(j))$ as a vertex in $ \zzs{n-1}Q^{op}$.

Since (\ref{uma}) of Condition \ref{conditionaa} holds for $(u',m')$, we have that \eqqcn{psinpmp}{\Psi_{(u',m')}(\olG e) = \GG \mbox{ and }\Psi_{(u',m')}(\olG e_{\tau^{-h} v}) = \ttn{-h} \GG e_v} for $d_{Q^{op}}(u_1,0) \ge d_{Q^{op}}(v,h)> d_{Q^{op}}(u,m)$.
Since \eqqcn{}{\arr{ccl}{\ttn{-h} \GG e_v &\simeq &\Hom_{\GG}(\GG ,\ttn{-h} \GG e_{v}) =\caM^-_{(u',m')} (\GG ,\ttn{-h} \GG e_{v}) \\ &\simeq &\caM^-_{(u',m')}\Hom_{\olG}(\Psi_{(u',m')}\olG e, \Psi_{(u',m')}\olG e_{\tau^{-h} v}) \\ & \simeq & \Hom_{\olG}(\olG e,\olG e_{\tau^{-h} v})  \simeq e \olG e_{\tau^{-h} v},} } we have \eqqcn{Yt}{eY_t \simeq X_t.}
By applying $\Hom_{\olG}(\olG e,\mbox{--})$ on \eqref{nnassinG}, we get an exact sequence of $\GG$-modules from \eqref{nnsource}, \eqqc{xzero}{0\lrw \ttn{-m+1} \GG e_{u} \slrw{g_{n+1}} \cdots \lrw X_t \lrw \cdots \slrw{g_1} X_1 \lrw e\olG e_{\tau^{-m}u} \lrw 0,} since \eqref{nnassinG} is an $n$-almost split sequence in $\bcaG_{(u,m)}$.

Note that \eqqcn{endo}{\End_{\GG} e\olG e_{\tau^{-m}u} \simeq e_{\tau^{-m}u}\olG e_{\tau^{-m}u} \simeq k,} so $ e\olG e_{\tau^{-m}u}$ is an indecomposable $\GG$-module.

Since $\caM^-_{(u,m)}$ is $n$-rigid, $\caM^-_{(u',m')}$ is $n$-rigid.
Let $Z_t$ be the cokernel of $g_{t+1}$ in \eqref{nnsource}, then \eqqcn{Zcoker}{Z_0\simeq e\olG e_{\tau^{-m}u} \mbox{ and }Z_{n+1} =\ttn{-m+1} \GG e_{u} .}
By the dual of 2.2.1 of \cite{i007} \eqqcn{}{\Ext^t_{\GG}(Z_0, \ttn{-r} \GG e_{v}) =0} for  $ \ttn{-r} \GG e_{v}$ in $\caM^-_{(u,m)}$ and $0< t < n$.
Thus $Z_0$ is in $^{\perp_n}\caM^-_{(u,m)}$.
If $r>0$, then $\ttn{-r} \GG e_{v}$ is not projective.
By Theorem 1.5 of \cite{i007}, we have that for $\ttn{-r} \GG e_{h}$ in $\caM^-_{(u,m)}$, \eqqcn{}{\Ext^t_{\GG}(\ttn{-r} \GG e_{v}, Z_0)=\Ext^{n-t}_{\GG}(Z_0, \ttn{-r+1} \GG e_{v}) =0 ,} for $0< t < n$.
This shows that $Z_0$ is in ${\caM^-_{(u,m)} }^{\perp_n}$, so $Z_0$ is in ${\caM^-_{(u,m)}}^{\perp_n}\cap {^{}}^{\perp_n}\caM^-_{(u,m)} $.
Let $ \caM' = \add(\{Z_0\} \cup \caM^-_{(u,m)})$, following the argument in the proof of Lemma  3.2 in \cite{i007}, we have the following exact sequences,
\eqqc{homcv}{\arr{l}{0\lrw \caM' (\mbox{--}, \ttn{-m+1} \GG e_{u}) \slrw{} \cdots \lrw \caM' (\mbox{--}, X_t) \lrw \cdots \\ \qquad \slrw{} \caM'(\mbox{--}, X_1 )\lrw \caM'(\mbox{--}, Z_0) \lrw \Ext_{}^1(\mbox{-- }, Z_1 ) \lrw 0,}}
and
\eqqc{homct}{\arr{l}{0\lrw \caM' (Z_0,\mbox{--}) \slrw{} \cdots \lrw \caM' ( X_t,\mbox{--}) \lrw \cdots \\ \qquad \slrw{} \caM'( X_n,\mbox{--} )\lrw \caM'(\ttn{-m+1} \GG e_{u},\mbox{--} ) \lrw \Ext_{}^1(Z_{n} , \mbox{--}) \lrw 0.}}
By 2.2.1(1) of \cite{i007}, we have \eqqcn{}{ \Ext_{\GG}^1 (\mbox{--}, Z_1 ) = \Ext_{\GG}^{n-1} (\mbox{--}, Z_n),} and \eqqc{embedF}{0 \lrw \Ext_{\GG}^{n-1} (\mbox{--}, Z_{n-1})\lrw \Ext^n_{\GG} (\mbox{--}, Z_n)\lrw \Ext_{\GG}^n (\mbox{--}, X_n)} is exact on $\caM^-_{(u,m)}$.
Note that $\caM^-_{(u,m)} \subset {}^{\perp_n} \GG$, so by Theorem 1.5 of \cite{i007}, we have that \eqqcn{}{ D\Ext^n_{\GG} (\mbox{--}, Z) \simeq \uhom(Z , \ttn{}\mbox{--}).}
Apply $D=\hhom_k(\mbox{--},k)$ on \eqref{embedF}, using this we get an exact sequence \eqqcn{FandGG}{\overline{\caM'}(X_n,\ttn{}\mbox{--})\lrw \overline{\caM'} (Z_n,\ttn{}\mbox{--}) \lrw D\Ext_{\GG}^{n-1} (\mbox{--}, Z_{n-1}) \lrw 0 .}
So we get exact sequence
\eqqc{FandG}{{\caM'}(X_n,\mbox{--})\lrw {\caM'} (Z_n,) \lrw D\Ext_{\GG}^{n-1} (\ttn{-1}\mbox{--}, Z_{n-1}) \lrw 0 }
on $\caM^-_{(u,m)}$.
Note that both $\bcaG^-_{(u,m)}$ and $\caM^-_{(u,m)}$ are Orlov categories and $\bcaG^-_{(u',m')}$ and $\caM^-_{(u',m')}$ are equivalent.
Since $\bcaG^-_{(u,m)}$ is $\tau$-mature, \eqref{nnsourceinG} is an $n$-almost split sequence in $\bcaG^-_{(u,m)}$, thus it is a source sequence in $\bcaG^-_{(u',m')}$, so \eqref{xzero} is also a source sequence in $\caM^-_{(u',m')}$.
Hence it is also a source sequence in $\caM^-_{(u,m)}$ by Lemma \ref{nobackmap}.
Using the argument of the proof of Proposition 3.3 of \cite{i007}, we get that it is a sink sequence in $\caM^-_{(u,m)}$, and $Z_0 \simeq \ttn{-1} Z_{n+1}$, that is \eqqcn{}{e\olG \tau^{-m}u \simeq \ttn{-m} \GG e_u} as $\GG$-modules.
So \eqref{xzero} is an $n$-almost split sequence in $\caM^-_{(u,m)}$ with $e\olG \tau^{-m}u \simeq \ttn{-m} \GG e_u$.
This proves (\ref{umb}) of Condition \ref{conditionaa} for $(u,m)$.

Extend $\Theta_{(u',m')}^-$ to $\Theta_{(u,m)}^-$ on $\caM^-_{(u,m)}$ by setting \eqqcn{}{\Theta_{(u,m)}^- (\ttn{-m}\GG e_{u}) = \olG e_{\tau^{-m}u}\\ = \olG e_{(u,m)},} we have $\Theta_{(u,m)}^- (\ttn{-r}\GG e_{v}) = \Theta_{(u',m')}^- (\ttn{-r}\GG e_{v})= \olG e_{(v,r)}$ for $\ttn{-r}\GG e_{v}$ in $\caM^-_{(u',m')}$.
Thus (\ref{umb}) of Condition \ref{conditionaa} for $(u,m)$ tells us that if \eqref{nnassinG} is a sink sequence,  so is \eqref{xzero}, since it is exactly the image of \eqref{nnassinG} under $\Theta_{(u,m)}^-$.
The quiver ${Q_{(u,m)}}$ is $\tau$-mature with $Q^{op}=Q_{(u_l,0)}$ an initial full subquiver which is a complete $\tau$-slice, and $\Theta_{(u,m)}^-$ restricts to the equivalence $\Theta_{(u_l,0)}^-$ on $\caM^-_{(u_l,0)}$.
So by  Lemma \ref{nobackmap} and Proposition \ref{relation:local},  $\Theta_{(u,m)}^-$ is an equivalence from $ \caM^-_{(u,m)}$ to $ \bcaG^-_{(u,m)}$.
Thus
\eqqcn{}{\Hom_{\olG}(\olG e_{(v,t)},\olG e_{(u,m)}) \simeq \caP(Q_{(u,m)})(\tau^{-t} v, \tau^{-m} u) \simeq \Hom_{\caM^-_{(u,m)}} (e\olG e_{\tau^{-t}v}, e\olG e_{\tau^{-m}u}) .}
This proves (\ref{uma}) of Condition \ref{conditionaa} for $(u,m)$.
Thus Condition \ref{conditionaa} holds for $(u,m)$.
\end{proof}

Write $Q(\caM^{-})$ for the full subquiver of $\zZ|_{n-1} Q^{op}$ with vertex set  \eqqcn{}{Q_0(\caM^{-}) = \{(j,r) \in \olQ_0| \ttn{-r} \GG e_j  \neq 0\}} and $Q(\caM^{+})$ for the full subquiver of $\zZ|_{n-1} Q^{op}$ with vertex set  \eqqcn{}{Q_0(\caM^{+})= \{(j,-r) \in \olQ_0| \ttn{r} De_j\GG  \neq 0\}.}
Let \eqqcn{}{\caG(\caM^-) =\add\{\olG e_j | j \in Q_0(\caM^{-}) \} \mbox{ and }\caG(\caM^+) =\add\{\olG e_j | j \in Q_0(\caM^{-}) \}.}
We have the following result.

\begin{thm}\label{equiv}
Assume that $\GG$ is an $n$-slice algebra as above.

If $Q(\caM^{-})$ is $\tau$-mature and $\caM^-$ is $n$-rigid, then $\caM^-$ have $n$-almost split sequence for each of its non-projective indecomposable object, and there is an equivalence $\Theta^-$  from $\caM^-$ to $\caG(\caM^-)$  such that \eqqcn{}{\tau^- \Theta^- \simeq \Theta^-\ttn{-}.}

If $Q(\caM^{+})$ is $\tau$-mature and $\caM^+$ is $n$-rigid, then $\caM^+$ have $n$-almost split sequence for each of its non-injective indecomposable object, and there is an equivalence  $\Theta^+$ from $\caM^+$ to $\caG(\caM^+)$ such that \eqqcn{}{\tau \Theta^+ \simeq \Theta^+\ttn{}.}
\end{thm}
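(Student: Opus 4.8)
The plan is to prove the statement about $\caM^-$ by induction on the pairs $(u,m)$, processed in order of strictly decreasing degree $d_{Q^{op}}$, with Lemma \ref{inductivestep} supplying the inductive step and the finite equivalences of Condition \ref{conditionaa} being glued into a single global functor. The statement about $\caM^+$ I would then obtain by duality: since $\GG^{op}$ is again an $n$-slice algebra by Proposition \ref{nslice:opp}, and the standard duality $D$ carries $\caM^+(\GG)$ to $\caM^-(\GG^{op})$ while interchanging $\ttn{}$ with $\ttn{-}$, the $\caM^-$-statement applied to $\GG^{op}$ converts into the $\caM^+$-statement for $\GG$. Thus I would treat only $\caM^-$ in detail.

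First I would fix the base case and check that the two global hypotheses descend to every finite truncation. Condition \ref{conditionaa} holds for $(u_1,0)$, since $\caM^-_{(u_1,0)} = \add\GG = \bcaG^-_{(u_1,0)}$ and $\Theta^-_{(u_1,0)}$ is the tautological isomorphism $\GG e_u \mapsto \olG e_u$. For a general $(u,m)$ with $\ttn{-m}\GG e_u \neq 0$, the quiver $Q^-_{(u,m)}$ is a convex subquiver of $Q(\caM^-)$; reading off the criterion of Corollary \ref{nass:subcat} shows that $\tau$-maturity passes to convex subquivers, so $Q^-_{(u,m)}$ is $\tau$-mature once $Q(\caM^-)$ is, while $n$-rigidity is automatically inherited by the full subcategory $\caM^-_{(u,m)}$ of the $n$-rigid category $\caM^-$. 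Hence the hypotheses of Lemma \ref{inductivestep} are met at every stage. Since $d_{Q^{op}}$ is injective on the vertices of $Q(\caM^-)$ with image bounded above by $l$, ordinary induction on $-d_{Q^{op}}$ applies: starting from $(u_1,0)$ and invoking Lemma \ref{inductivestep} to pass from the vertex of degree $d_{Q^{op}}(u,m)+1$ to the one of degree $d_{Q^{op}}(u,m)$, adding one indecomposable at a time, I would conclude that Condition \ref{conditionaa} holds for every $(u,m)$.

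Next I would assemble the pieces. By Condition \ref{conditionaa}(\ref{uma}) the equivalences $\Theta^-_{(u,m)}\colon \caM^-_{(u,m)} \to \bcaG^-_{(u,m)}$ are compatible, agreeing on common subcategories and all sending $\ttn{-t}\GG e_{u'}$ to $\olG e_{\tau^{-t}u'}$; passing to the colimit over the exhausting chain of truncations yields a functor $\Theta^-\colon \caM^- \to \caG(\caM^-)$. It is an equivalence because every indecomposable of $\caM^-$, and every morphism between two of them, already lies in some $\caM^-_{(u,m)}$, on which $\Theta^-$ agrees with the equivalence $\Theta^-_{(u,m)}$. Since $Q(\caM^-)$ is $\tau$-mature, $\caG(\caM^-)$ is the subcategory of $\bcaG$ defined by a $\tau$-mature subquiver and therefore has $n$-almost split sequences by Proposition \ref{GGnass}; transporting them along $\Theta^-$ shows that $\caM^-$ has an $n$-almost split sequence at each non-projective indecomposable, namely the sequence already produced in Condition \ref{conditionaa}(\ref{umb}). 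Finally, the compatibility $\tau^-\Theta^- \simeq \Theta^-\ttn{-}$ holds on objects because $\Theta^-(\ttn{-(t+1)}\GG e_u) = \olG e_{\tau^{-(t+1)}u} = \tau^-\bigl(\olG e_{\tau^{-t}u}\bigr) = \tau^-\Theta^-(\ttn{-t}\GG e_u)$, and extends to a natural isomorphism because both $\ttn{-}$ and $\tau^-$ are the translations attached to the $n$-almost split sequences that the equivalence $\Theta^-$ preserves.

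The step I expect to be the main obstacle is not any single computation, since Lemma \ref{inductivestep} already carries the local analytic weight, but rather making the passage to the infinite, possibly representation-infinite, category $\caM^-$ rigorous: one must check that the finite equivalences glue coherently over an infinite exhausting chain and, above all, that an $n$-almost split sequence valid in a finite truncation remains $n$-almost split after the ambient category is enlarged. This last point is where Lemma \ref{nobackmap} and the Orlov structure of Proposition \ref{orlovM} are essential, for they confine each source and sink sequence to a bounded window of degrees and force all morphisms from strictly higher-degree indecomposables to vanish, so that enlarging $\caM^-_{(u,m)}$ to $\caM^-$ introduces no new relevant maps.
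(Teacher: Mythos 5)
Your proposal is correct and follows essentially the same route as the paper: the paper likewise writes $\caM^-$ and $\bcaG^-$ as unions of the truncations $\caM^-_{(u,m)}$ and $\bcaG^-_{(u,m)}$, observes that $\tau$-maturity and $n$-rigidity descend to each truncation, runs the induction on $d_{Q^{op}}$ with Lemma \ref{inductivestep} as the step, glues the $\Theta^-_{(u,m)}$ into a global $\Theta^-$ via Proposition \ref{relation:local}, and obtains the $\caM^+$ statement by duality. Your added remarks on why the gluing over the infinite chain is harmless (Lemma \ref{nobackmap} and the Orlov structure) make explicit a point the paper leaves implicit, but do not change the argument.
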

\begin{proof}
We prove the first assertion, the second follows from duality.

It is obvious that $$\caM^- = \bigcup\limits_{\arr{c}{(u,m)\in Q_0(\caM^{-})\\ d_{Q^{op}}(u,m)\le d_{Q^{op}}(u_l,0)}} \caM^-_{(u,m)}$$ and $$\bcaG^- = \bigcup\limits_{\arr{c}{(u,m)\in Q_0(\caM^{-})\\ d_{Q^{op}}(u,m)\le d_{Q^{op}}(u_l,0) } } \bcaG^-_{(u,m)}.$$
And the first assertion holds for $\caM^-_{(u_1,0)} $ and $\caG(\caM^-_{(u_l,0)}) = \bcaG^-_{(u_1,0)}$

Since $Q(\caM^{-})$ is $\tau$-mature and $\caM^-$ is $n$-rigid, so each $Q^-_{(u,m)}$ is $\tau$-mature and each $\caM^-_{(u,m)}$ is $n$-rigid.
Take Lemma \ref{inductivestep} as the induction step, we immediately get $\caM^-$ have $n$-almost split sequence for each of its non-projective indecomposable object, and for $X$ in $ \caM^-$, define $ \Theta^- (X) = \Theta^-_{(u,m)} (X)$ if $X$ is in $\caM^-{(u,m)}$, then  $\Theta^-$ is an equivalence, by Proposition \ref{relation:local}, and we have \eqqcn{}{\tau^- \Theta^- \simeq \Theta^-\ttn{-}} from the definition of $\Theta^-$.
This proves the first assertion.
\end{proof}

We say $\caM^-$ (respectively, $\caM^+$) is {\em $\tau$-mature} if $Q(\caM^-)$ (respectively, $Q(\caM^+)$) is $\tau$-mature, and we say $\GG$ is {\em $\tau$-mature} if both $\caM^-$ and $\caM^+$ are $\tau$-mature.
As corollary of the above theorem, we have the following theorem about the Auslander-Reiten quiver of the $n$-preinjective and $n$-preprojective components of an $n$-slice algebra $\GG$.
\begin{thm}\label{dual:ARquiver}
Let $\GG$ be an $n$-slice algebra as above.

If $\caM^-$ is both $n$-rigid and $\tau$-mature then the Auslander-Reiten quiver of $\caM^-(\GG)$  is a truncation of the bound quiver $\zZ|_{n-1} Q^{op,\perp}$.

If $\caM^+$ is $n$-rigid and $\tau$-mature then the Auslander-Reiten quiver of $\caM^+(\GG)$  is a truncation of the bound quiver $\zZ|_{n-1} Q^{op,\perp}$.
\end{thm}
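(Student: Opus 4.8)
The plan is to derive this statement as a formal consequence of the equivalence constructed in Theorem \ref{equiv}: once $\caM^-$ has been identified, translation and all, with a full subcategory of $\bcaG$, its Auslander-Reiten quiver can be read off combinatorially from $\zZ|_{n-1}Q^{op,\perp}$. I would argue the assertion for $\caM^-$ in full and obtain the one for $\caM^+$ by the same duality that is used throughout this section.

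First I would invoke Theorem \ref{equiv}. Since $\caM^-$ is assumed $n$-rigid and $\tau$-mature, it has an $n$-almost split sequence at each of its non-projective indecomposables, so its Auslander-Reiten quiver is defined in the sense of \eqref{noarrows}, and there is an equivalence $\Theta^-\colon\caM^-\to\caG(\caM^-)$ with $\tau^-\Theta^-\simeq\Theta^-\ttn{-}$. An equivalence of Hom-finite Krull-Schmidt categories preserves the Jacobson radical and its square, hence the arrow numbers $\dmn_k J(x,y)/J^2(x,y)$ of \eqref{noarrows}, and it carries sink maps to sink maps and source maps to source maps, so by Lemma \ref{category:arrows} the arrow multiplicities agree on both sides; because $\Theta^-$ intertwines $\ttn{-}$ with $\tau^-$ it sends $n$-almost split sequences to $n$-almost split sequences and thereby matches the two translations. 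Thus $\Theta^-$ induces an isomorphism of Auslander-Reiten quivers, and it remains only to identify the Auslander-Reiten quiver of $\caG(\caM^-)=\add\{\olG e_j\mid j\in Q_0(\caM^-)\}$.

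For this I would recall that, by Proposition \ref{pathalgcat}, $\bcaG=\add\olG$ is equivalent to the bound path category $\caP(\zZ|_{n-1}Q^{op,\perp})$, with the indecomposable $\olG e_j$ corresponding to the vertex $j$. Consequently $\caG(\caM^-)$ is the bound path category of the full bound subquiver of $\zZ|_{n-1}Q^{op,\perp}$ supported on the vertex set $Q_0(\caM^-)$, namely $Q(\caM^-)^{\perp}$. Since the quadratic dual leaves the underlying quiver unchanged, and the $\tau$-mature hypothesis already makes $Q(\caM^-)$ a convex full subquiver of $\zZ|_{n-1}Q^{op}$, the subquiver $Q(\caM^-)^{\perp}$ is automatically a truncation of $\zZ|_{n-1}Q^{op,\perp}$. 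The remark following the proof of Proposition \ref{relation:local} then supplies the final link: for the bound path category of such a dual $n$-translation quiver the Auslander-Reiten quiver is the quiver itself, so the Auslander-Reiten quiver of $\caG(\caM^-)$, and hence of $\caM^-$, is $Q(\caM^-)^{\perp}$, a truncation of $\zZ|_{n-1}Q^{op,\perp}$.

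The genuine content, as opposed to the bookkeeping above, sits entirely in that last identification, and this is the step I would treat most carefully: one must know that the intrinsically defined arrow numbers $\dmn_k J/J^2$ and the $n$-translation of the abstract category $\caG(\caM^-)$ coincide with the combinatorial arrows and the shift $\tau(i,m)=(i,m-1)$ of $\zZ|_{n-1}Q^{op,\perp}$. This matching is precisely what the $\tau$-hammock and Koszul-complex description of Proposition \ref{KoszulinG} provides, through the source and sink sequences governed by Lemma \ref{category:relation}; granting it, the remainder of the proof is only the formal transport of this picture across the equivalence $\Theta^-$.
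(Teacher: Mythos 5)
Your proposal is correct and follows the same route as the paper, which states this theorem as an immediate corollary of Theorem \ref{equiv} without further argument: the equivalence $\Theta^-$ (resp.\ $\Theta^+$) intertwining $\ttn{-}$ with $\tau^-$ identifies the Auslander--Reiten quiver of $\caM^{\mp}$ with that of $\caG(\caM^{\mp})$, which by Proposition \ref{pathalgcat} and the remark after Proposition \ref{relation:local} is the truncation $Q(\caM^{\mp})^{\perp}$ of $\zZ|_{n-1}Q^{op,\perp}$. Your write-up simply makes explicit the bookkeeping (preservation of $J/J^2$, sink/source maps, and the translation) that the paper leaves implicit.
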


Now we consider when an algebra have $n$-rigid $\caM^-(\GG)$ and $\caM^+(\GG)$.
Since an $n$-hereditary algebra is either $n$-representation-finite or $n$-representation-infinite, the following lemma follows from Lemma 2.7 when $\GG$ is $n$-representation-finite, and from Corollary 4.7 when $\GG$ is $n$-representation-infinite.

\begin{lem}\label{nrigidM} If $\GG$ is an $n$-hereditary algebra,  then both $\caM^+$ and $\caM^-$ are $n$-rigid.
\end{lem}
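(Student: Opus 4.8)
The statement to prove is that $\Ext^i_{\GG}(\ttn{-s}\GG,\ttn{-t}\GG)=0$ and $\Ext^i_{\GG}(\ttn{t}D\GG,\ttn{s}D\GG)=0$ for all $s,t\ge 0$ and all $0<i<n$. First I would dispose of $\caM^+$ by duality: the standard duality $D$ carries $\ttn{}$ to $\ttn{-}$ and $D\GG$ to the regular module $\GG^{op}$, so that $D\caM^+(\GG)=\caM^-(\GG^{op})$, and $\Ext^i_{\GG}(A,B)\cong\Ext^i_{\GG^{op}}(DB,DA)$ shows that $\caM^+(\GG)$ is $n$-rigid precisely when $\caM^-(\GG^{op})$ is. Since $\GG^{op}$ is again an $n$-slice algebra of global dimension $n$ (Proposition \ref{nslice:opp} and Lemma \ref{globaldim}), it is enough, granting that the $(n,n)$-condition is inherited by $\GG^{op}$, to treat $\caM^-$. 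I stress at the outset that the argument must be purely homological: the $n$-almost split sequences of Theorem \ref{equiv} presuppose $n$-rigidity, so the Auslander--Reiten structure is not yet available and must not be invoked. The only inputs are the formulas $\ttn{}=D\Ext^n_{\GG}(-,\GG)$ and $\ttn{-}=\Ext^n_{\GG}(D-,\GG)$, valid because $\gl\GG=n$ by Lemma \ref{globaldim}, together with the $(n,n)$-condition.

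The engine is Serre duality in $\caDb(\GG)$, which has Serre functor $\nu=D\GG\Lotimes_{\GG}-$ because $\gl\GG=n<\infty$; writing $\nu_n=\nu[-n]$ one has $\ttn{-}=H^0\nu_n^{-1}$ on modules, and Serre duality gives functorial isomorphisms
\[
D\Ext^i_{\GG}(X,Y)\;\cong\;\Ext^{n-i}_{\GG}(Y,\nu_n X)\qquad(0<i<n).
\]
The role of the $(n,n)$-condition is to make $\nu_n$ act on the orbit $\{\ttn{-s}\GG\}_{s\ge 0}$ like the module functor $\ttn{}$: from the minimal injective resolution $0\to\GG\to I^0\to\cdots$ the bound $\mathrm{fd}\,I^t<n$ for $0\le t<n$ forces the cohomology of $\nu_n^{-1}(\ttn{-s}\GG)$ off degree $0$ to be projective and placed in degrees invisible to $\Ext^{j}$ in the range $0<j<n$. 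As $\ttn{-s}\GG$ has no projective summand for $s\ge 1$, this yields $\nu_n(\ttn{-s}\GG)\cong\ttn{-(s-1)}\GG$ modulo such negligible summands, and hence the higher Auslander--Reiten duality
\[
\Ext^i_{\GG}(\ttn{-s}\GG,Y)\;\cong\;D\Ext^{n-i}_{\GG}(Y,\ttn{-(s-1)}\GG)\qquad(0<i<n,\ s\ge1).
\]
This is the homological content recorded in Theorem 1.5 and 2.2.1 of \cite{i007} (see also \cite{i05}); the same bound on the injectives shows directly that every $\ttn{-s}\GG$ lies in ${}^{\perp_n}\GG$.

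Granting the displayed duality, the vanishing is a two-step descent. Applying it once in each variable and using $D^2=\mathrm{id}$ gives the translation invariance
\[
\Ext^i_{\GG}(\ttn{-s}\GG,\ttn{-t}\GG)\;\cong\;\Ext^i_{\GG}(\ttn{-(s-1)}\GG,\ttn{-(t-1)}\GG)
\]
for $s,t\ge1$ and $0<i<n$. Lowering $s$ and $t$ together reduces to the case $\min(s,t)=0$: if $s=0$ the group is $\Ext^i_{\GG}(\GG,\ttn{-(t-s)}\GG)=0$ since $\GG$ is projective, and if $t=0$ it is $\Ext^i_{\GG}(\ttn{-(s-t)}\GG,\GG)=0$ since $\ttn{-m}\GG\in{}^{\perp_n}\GG$. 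This settles $\caM^-$, and the opposite algebra settles $\caM^+$. The decisive difficulty is the second paragraph: one must track the projective (and, dually, injective) summands that $\nu_n^{\pm1}$ introduces off degree $0$ and verify that they do not contribute to $\Ext^i$ for $0<i<n$. This bookkeeping is exactly the homological force of the $(n,n)$-condition, and it has to be carried out on its own, before and independently of the Auslander--Reiten machinery of Theorem \ref{equiv}, in order to keep the argument free of circularity.
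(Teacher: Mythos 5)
Your overall strategy (reduce $n$-rigidity to the containment $\caM^\pm\subset{}^{\perp_n}\GG$ plus a duality/translation argument) is workable in principle, but as written the proof has two genuine gaps, both of which you yourself flag without closing. First, the entire argument hinges on the isomorphism $\Ext^i_{\GG}(\ttn{-s}\GG,Y)\cong D\Ext^{n-i}_{\GG}(Y,\ttn{-(s-1)}\GG)$ for $0<i<n$, which you derive by asserting that $\nu_n(\ttn{-s}\GG)\cong\ttn{-(s-1)}\GG$ ``modulo negligible summands.'' You then state that the bookkeeping of the off-degree-zero cohomology ``has to be carried out on its own'' --- that is, the decisive step is announced rather than proved. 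This is precisely the content of the higher Auslander--Reiten duality (Theorem 1.5 and 2.2.1 of \cite{i007}), and it holds only under hypotheses on the argument (membership in ${}^{\perp_n}\GG$, absence of projective summands) that must be verified inductively along the orbit; without that verification the ``translation invariance'' of the $\Ext$-groups, applied once in each variable, is not established. Second, your reduction of $\caM^+$ to $\caM^-(\GG^{op})$ requires the $(n,n)$-condition to pass to $\GG^{op}$, which you explicitly grant without proof; the left--right symmetry of Iyama's $(l,l')$-conditions is not automatic and should not be assumed silently.

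The paper's proof avoids both issues and is considerably more economical: it cites Lemma 3.3 of \cite{i11}, which says exactly that under the $(n,n)$-condition one has $\Ext^t_{\GG}(\ttn{-1}X,\GG)=0$ for $0\le t<n$ for \emph{any} module $X$, so that every non-projective indecomposable $\ttn{-r}\GG e_u=\ttn{-1}(\ttn{-r+1}\GG e_u)$ lies in ${}^{\perp_n}\GG$; the $n$-rigidity of $\caM^-$ then follows at once from Proposition 2.5 of \cite{i11}. For $\caM^+$ it does not pass to the opposite algebra at all: using Theorem 1.4.1 of \cite{i007}, $\ttn{}$ and $\ttn{-}$ are inverse equivalences between $\underline{{}^{\perp_n}\GG}$ and $\overline{D\GG^{\perp_n}}$, so $\ttn{r}De_u\GG=\ttn{-1}\ttn{r+1}De_u\GG\in{}^{\perp_n}\GG$ and the same Proposition 2.5 applies. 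If you want to salvage your route, you should replace the Serre-duality sketch by an explicit appeal to these results of Iyama, and treat $\caM^+$ directly as the paper does rather than through $\GG^{op}$.
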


By Lemma \ref{nrigidM}, we get immediately the following results from Theorem \ref{equiv}, since an $n$-slice algebra is $n$-hereditary.
\begin{thm}\label{exitsssaseqnn}Assume that $\GG$ is an $n$-slice algebra.

If $\caM^{-}$ is $\tau$-mature, then $\caM^-$ have $n$-almost split sequence for each of its non-projective indecomposable object, and there is an equivalence $\Theta^-$  from $\caM^-$ to $\caG(\caM^-)$  such that \eqqcn{}{\tau^- \Theta^- \simeq \Theta^-\ttn{-}.}

If $\caM^{+}$ is $\tau$-mature, then $\caM^+$ have $n$-almost split sequence for each of its non-injective indecomposable object, and there is an equivalence  $\Theta^+$ from $\caM^+$ to $\caG(\caM^+)$ such that \eqqcn{}{\tau \Theta^+ \simeq \Theta^+\ttn{}.}
\end{thm}

\begin{thm}\label{dual:ARquivernn}
Assume that $\GG$ is an $n$-slice algebra.
Then the Auslander-Reiten quiver of $\caM^+(\GG)$ and $\caM^-(\GG)$ are truncations of the quiver $\zZ|_{n-1} Q^{op,\perp}$ if they are $\tau$-mature.
\end{thm}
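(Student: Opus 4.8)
The plan is to read this theorem as a direct synthesis of the results already assembled, so that its proof reduces to checking that the hypotheses of Theorem \ref{dual:ARquiver} are satisfied for both closures. The first step is to record that an $n$-slice algebra has the right homological dimension: by Lemma \ref{globaldim}, $\GG$ is Koszul of global dimension exactly $n$. This is precisely the input required to feed the $n$-rigidity criterion, which is otherwise stated for algebras of global dimension $n$.

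Next I would invoke Lemma \ref{nrigidM}. Since $\GG$ is assumed to satisfy Iyama's $(n,n)$-condition and has global dimension $n$ by the previous step, that lemma yields that both $\caM^+$ and $\caM^-$ are $n$-rigid. With this, both hypotheses of Theorem \ref{dual:ARquiver} are in place for whichever of $\caM^+,\caM^-$ is assumed $\tau$-mature: the $n$-rigidity is supplied by Lemma \ref{nrigidM}, while the $\tau$-maturity is exactly the standing assumption of the present statement. Applying Theorem \ref{dual:ARquiver} separately to $\caM^+$ and to $\caM^-$ then identifies each of their Auslander-Reiten quivers with a truncation of $\zZ|_{n-1} Q^{op,\perp}$, which is the assertion.

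The genuine content of this identification does not sit in the present statement but in the machinery that flows into it, so I expect no real obstacle to remain here; the work is essentially bookkeeping of hypotheses. The substantive difficulty was handled upstream in the inductive Lemma \ref{inductivestep}, which propagates Condition \ref{conditionaa} across the degree filtration by using the Koszul complexes of Proposition \ref{KoszulinG} as candidate $n$-almost split sequences and then verifying, via the $n$-rigidity estimates built on \cite{i007}, that these candidates are genuine $n$-almost split sequences in the relevant subcategories. That induction culminates in the translation-compatible equivalence $\Theta^{\pm}$ of Theorem \ref{equiv} (equivalently Theorem \ref{exitsssaseqnn}), intertwining $\ttn{}$ and $\ttn{-}$ with the combinatorial $n$-translation on $\zZ|_{n-1} Q^{op,\perp}$. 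The only point worth flagging is the role of the $\tau$-maturity assumption itself: by Corollary \ref{nass:subcat} it is exactly the condition guaranteeing that the Koszul complexes stay $n$-almost split after restriction to $\caM^{\pm}$, which is what licenses Theorem \ref{dual:ARquiver} to read off the quiver. Granting all of that, the present theorem follows immediately by combining Lemma \ref{globaldim}, Lemma \ref{nrigidM}, and Theorem \ref{dual:ARquiver}.
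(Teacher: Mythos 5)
Your proposal is correct and follows exactly the paper's own route: the paper derives this theorem immediately from Lemma \ref{nrigidM} (whose global-dimension hypothesis is supplied by Lemma \ref{globaldim}) combined with Theorem \ref{dual:ARquiver}, which is precisely your chain of reasoning.
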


{\bf Remark.} Let $\LL= \GG^{!,op}$ and $\dtL$ be the trivial extension of $\LL$.
$\dtL$ is $n$-translation algebra, so it is $(n+1,q)$-Koszul for $q>0$, or $q=\infty$.
If $q=\infty$, then we have that $\caM^+(\GG)$ and $\caM^-(\GG)$ are always $\tau$-mature.
If $q$ finite, it is interesting to know when $\caM^+(\GG)$ and $\caM^-(\GG)$ are $\tau$-mature?

\subsection{Example.}
\label{sub:eg.hered}
{\em Preprojective and preinjective components of a path algebra.}

Let $Q$ be an acyclic quiver, and let $\GG=kQ$ be the path algebra defined on $Q$, then the relation set for $\GG$ is the empty set.
It is well known that the Auslander-Reiten quiver of the preprojective and preinjective components of finitely generated $kQ$-modules are truncations of the quiver $\zZ Q^{op}$ \cite{ars95,r84}.
This can be recovered using above results.
Let $\LL = kQ/(Q_2)$, where $Q_2$ is the set of paths of length $2$, $\LL$ is the quotient algebra of $kQ$ by the square of its radical.
Then $\LL = \GG^{!,op}$ is the quadratic dual of $\GG$, with vanishing radical square.

Now $\dtL $ is a symmetric algebra with vanishing radical cube. The returning arrow quiver $\tQ$ is exactly the double quiver of $Q$, obtained from $Q$ by adding a reversed arrow $\xa^*$ to each arrow $\xa$, with relation set \eqqcn{}{\arr{ll}{\trho = & \{\xa{\xa'} |\xa,{\xa'}\in Q_1, t({\xa'}) =s(\xa)\} \\ & \cup \{\xa^*\xa - {\xa'}^*{\xa'}|\xa,{\xa'}\in Q_1, s(\xa)=s({\xa'})\} \\ & \cup \{\xa^*\xa - {\xa'}^*{\xa'}|\xa,{\xa'}\in Q_1, s(\xa)=t({\xa'})\} \\ & \cup  \{\xa\xa^* -{\xa'}{\xa'}^*| \xa,{\xa'}\in Q_1,t(\xa)=t({\xa'})\}}} as the bound quiver of some twisted trivial extension $\dtnL$ of $\LL$.

Note that $\dtnL$ is $(2,\infty)$-Koszul when $\GG$ is representation infinite, and is $(2,h-2)$-Koszul when $\GG$ is representation finite, where $h$ is the Coxeter number for the  Dynkin quiver $Q$, by Corollary 4.3 of  \cite{bbk02}).
So $\dtnL$ is a stable $1$-translation algebra and $\GG$ is a $1$-slice algebra.

Consider now the algebra $\olL = \dtnL \# k\zZ^*$, its  bound quiver has quiver  $\mathbb Z|_0 Q =\mathbb ZQ $.
The relation set is \eqqcn{relation}{\arr{ll}{\olrho = & \{(\xa,r)({\xa'},r+1) | \xa,{\xa'}\in Q_1,t({\xa'})=s(\xa), r\in\zZ\} \\ & \cup \{(\xa^*,r)(\xa,r+1) - ({\xa'}^*,r)({\xa'},r+1)|\xa,{\xa'}\in Q_1, s(\xa)=s({\xa'}),r\in\zZ\}\\ & \cup \{(\xa^*,r)(\xa,r+1)- ({\xa'}^*,r)({\xa'},r+1)| \xa,{\xa'}\in Q_1, s(\xa)=t({\xa'}),r\in \zZ\} \\ & \cup \{(\xa,r)(\xa^*,r+1)-({\xa'},r)({\xa'}^*,r+1)| \xa,{\xa'}\in Q_1,t(\xa)=t({\xa'}),r\in\zZ\}.}}
$\zZ |_0 Q$ is a $1$-translation quiver with the $1$-translation $\tau (i,r)=(i,r-1)$.
The bound quiver $Q$, with all paths of length $2$ as relations, is a complete $\tau$-slice of the bound quiver $\zZ Q$ defined above.

The bound quiver $\zZ Q^{\perp}$ for the quadratic dual $\olG = \olL^{!,op}$ has the quiver $\zZ Q$ with relation set \eqqc{genmesh}{\olrho^{\perp} = \{\sum_{s(\xa)=i}(\xa^*,r)(\xa,r+1)+\sum_{t(\xa)=i}(\xa,r)(\xa^*,r+1) |i\in Q_0, r\in \zZ\}.}
This is the classical translation quiver $\mathbb ZQ$ with the mesh relations.

Clearly, $\caM^+$ and $\caM^-$ are both $1$-rigid.

If $\GG$ is representation infinite, we have that $Q$ is not Dynkin.
$\caM^+$ is the category of preinjective $\GG$-modules and $\caM^-$ is the category of preprojective $\GG$-modules.
Since $\dtL$ is Koszul, $\mathbb N^+  Q^{op}$ and $\mathbb N^-  Q^{op}
$ are $\tau$-mature subquivers in $\zZ Q^{op}$.
$Q(\caM^+) = \mathbb N^+  Q^{op}$ and $Q(\caM^-) = \mathbb N^-  Q^{op}$ are respectively their Auslander-Reiten quivers with the mesh relations \eqref{genmesh}.

If $\GG$ is representation finite, we have that $Q$ is Dynkin, and $\caM^+=\caM^- = \mmod \GG$ is the category of finitely generated $\GG$-modules.
In this case, the quadratic dual $\olG$ of $\olL$ is an $(h-1)$-translation algebra with $(h-1)$-translation $\tau_{\perp}$, and with $(h-1)$-translation quiver $\zZ Q^{op,\perp}$ as its bound quiver.
$Q(\caM^-)$ is the subquiver of $\zZ Q^{op,\perp}$ obtained by taking the union of the  $\tau_{op,\perp}$-hammocks of the vertices of $Q$ in $\zZ Q^{op,\perp}$.
In this case, for each $v\in Q_0$, if $\tau v $ is in $Q(\caM^-)$, then $ \tau_{\perp }^{-1} v$ is not a vertex of $Q(\caM^-)$.
Thus  $Q(\caM^-)$ is a  $\tau$-mature subquiver of $\zZ Q^{^{op},\perp}$ with the restriction of  \eqref{genmesh} as  the mesh relations on $Q(\caM^-)$.
In fact, $Q(\caM^-)$ is exactly the Auslander-Reiten quiver of $\GG$.

The above idea  for the finite type is used in \cite{gl16} to study the iterated construction of the $n$-cubic pyramid algebras, whose quadratic dual are $(n-1)$-Auslander $n$-representation-finite algebras of type $A$ (absolutely $ n$-complete algebras in \cite{i11}).
In fact,  \cite{gl16} can be regarded as an example of $\ttn{}$-finite case of our main result in this section, which tell us how to find $\tau$-mature subquiver of  $Z|_{n-1} Q^{op}$ for certain $(n-1)$-representation-finite algebra $kQ/(\rho)$ of the type $A$ with bound quiver $Q= (Q_0,Q_1,\rho)$.
So our construction can also be regarded as a generalization of the cone construction in \cite{i11}
\section{$\nu_n$-closure and   $\mathbb Z|_{n-1} Q$}
\label{sec:nu}
Let $\GG$ be an $n$-slice algebra as in the last section.
By Theorem \ref{equiv}, there are  functors  $\Theta^{+}: \caM^{+} \to {\bcaG}$ and   $\Theta^{-}: \caM^{-} \to \bcaG$ for the $n$-preprojective component  $\caM^{-}$ of $\GG$ and $n$-preinjective component $\caM^{+}$ of $D\GG$, respectively, in the module  category of $\GG$.
Now we consider a version of such functors in derived category.

Let $\GG$ be a finite-dimensional algebra of global dimension $\le n$, and let
\eqqc{Nakayama}{\arr{l}{\nu  = D \circ  \mathbb R \hhom_{\GG} (- ,\GG)\simeq  D\GG \otimes_{\GG}^{\mathbb L} -: \mathcal D^b(\GG) \to \mathcal D^b(\GG)\\
\nu^{-}  =   \mathbb R \hhom_{\GG} (D - ,\GG)\simeq \mathbb R \hhom_{\GG} (D\GG,-): \mathcal D^b(\GG) \to \mathcal D^b(\GG)\\
}}
be the Nakayama functors.
They are autoequivalences of $ \mathcal D^b(\GG)$ and  are quasi-inverse one another.
The Nakayama functor is Serre functor and satisfies the functorial isomorphism (see \cite{h88,bk90}) \eqqc{}{\hhom_{\Db(\GG)}(X,Y) \simeq D \hhom_{\Db(\GG)}(Y, \nu X).}

Let $\nu_n = \nu[-n]:\mathcal D^b(\GG) \to \mathcal D^b(\GG)$, it is an auto-equivalence of $\mathcal D^b(\GG)$.
Regard $\mmod \GG $ as a subcategory of $\Db(\GG)$ with objects concentrated in degree zero, then (see \cite{i11})
\eqqcn{}{\tau_n=\Ho^0(\nu_n-) \mbox{, } \tau_n^-=\Ho^0(\nu_n^{-1}-), } and \eqqcn{}{ \hhom_{\Db(\GG)}(X,Y[n]) \simeq D \hhom_{\Db(\GG)} (Y, \nu_n X).}
For each projective $\GG$-module $\GG e_i$, we have \eqqcn{}{\nu_n (\GG e_i) \simeq D(e_i \GG)[-n], \mbox{ and } \nu_n^- D(e_i\GG)  = \GG e_i[n].}

Let \eqqcn{}{\caU =\caU(\GG) = \add\{\nu_n^t \GG | n\in \zZ\},} and  call it the {\em $\nu_n$-closure} of $\GG$ in $\mathcal D^b(\GG)$.
Let \eqqcn{}{\arr{l}{\caU_0^- = \add\{\nu_n^t \GG e_j | j\in Q_0, t\le 0 \mbox{ and } \ttn{t} \GG e_j \neq 0\} \\ \caU_0^+ = \add\{\nu_n^t D\GG e_j[-n] | j\in Q_0, t\ge 0 \mbox{ and } \ttn{t-1} D e_j \GG \neq 0\} ,}}
and let \eqqcn{}{\caU_0= \add( \caU_0^-\cup \caU_0^+).}
If $\GG$ is not $\ttn{}$-finite, then $\caU = \caU_0$.

Now assume that $\GG$ is an acyclic $n$-slice algebra with bound quiver $Q^{\perp}=(Q_0,Q_1,\rho^{\perp})$ and let $\LL$ be its quadratic dual with bound quiver $Q=(Q_0,Q_1,\rho)$.
Then $Q$ is a complete $\tau$-slice of the acyclic $n$-translation quiver $\olQ=\zZ|_n Q$, and $\olL =\olQ/(\rho)$ is an $n$-translation algebra.
Let $\olG$ be the quadratic dual of $\olL$ with bound quiver $\zZ|_n Q^{\perp} =(\olQ_0,\olQ_1, \overline{\rho}^{\perp} )$ and let $\bcaG =\add(\olG)$.
We now prove that under certain conditions, the $\nu_n$-closure of $\GG$ in the derived category  $\mathcal D^b(\GG)$ is equivalent to $\bcaG$.

We first prove the following lemma.

\begin{lem}\label{orlovdev}
If $\GG$ is acyclic, then $\caU$ is an Orlov category.
\end{lem}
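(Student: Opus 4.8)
The plan is to verify the three defining properties of an Orlov category for $\caU$, the hardest being the directedness condition~(3). First I would record that the indecomposable objects of $\caU$ are exactly the $\nu_n^t\GG e_u$ with $u\in Q_0$ and $t\in\zZ$: each is indecomposable because $\nu_n$ is an autoequivalence of $\Db(\GG)$ and $\GG e_u$ is an indecomposable projective, and the summands of the $\nu_n^t\GG$ exhaust them. Properties~(1) and~(2) are then immediate. Since $\GG$ is finite dimensional of finite global dimension (Lemma~\ref{globaldim}), every $\Hom$-space in $\Db(\GG)$ between two objects (with no shift) is finite dimensional, giving~(1); and $\End_{\caU}(\nu_n^t\GG e_u)\simeq\End_{\GG}(\GG e_u)=e_u\GG e_u\simeq k$ because $\nu_n$ is an autoequivalence and the quiver of $\GG$ is acyclic, giving~(2).

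For the degree function I would transport $d_{Q^{op}}$ from the vertices of $\zZ|_{n-1}Q^{op}$ along the identification of $\nu_n^t\GG e_u$ with the vertex $\tau^t u=(u,-t)$, setting $\deg(\nu_n^t\GG e_{u_a})=d_{Q^{op}}(\tau^t u_a)=tl+a$ for the path-ordering $Q_0=\{u_1,\dots,u_l\}$ used in Section~\ref{sec:bpc}, so that this is the same degree that will make the comparison functor of the following sections degree preserving. This is well defined on isomorphism classes once the $\nu_n^t\GG e_u$ are known to be pairwise non-isomorphic, which I would read off from the cohomological estimate below: a hypothetical isomorphism $\GG e_i\simeq\nu_n^m\GG e_j$ with $m\neq0$ is impossible, since for $m\ge1$ the two sides are concentrated in disjoint cohomological degrees (degree $0$ versus degrees $\ge n$), and the case $m<0$ reduces to $m>0$ by applying $\nu_n^{-m}$.

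The core of the argument is the vanishing
\[
\Hom_{\Db(\GG)}(\GG e_i,\nu_n^m\GG e_j)=0\qquad\text{for all }m\ge 1.
\]
I would obtain this from a cohomological amplitude bound for $\nu_n=D\,\RHom_{\GG}(-,\GG)[-n]$. Since $\gl\GG=n$, the functor $\RHom_{\GG}(-,\GG)$ carries a complex with cohomology in degrees $\ge a$ to one with cohomology in degrees $\le n-a$; after applying $D$ and the shift $[-n]$ this shows that $\nu_n$ preserves the property of having cohomology in degrees $\ge a$. Starting from $\nu_n\GG e_j=D(e_j\GG)[-n]$, concentrated in degree $n\ge1$, induction gives that $\nu_n^m\GG e_j$ has cohomology in degrees $\ge n\ge1$ for every $m\ge1$. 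As $\GG e_i$ is projective we have $\Hom_{\Db(\GG)}(\GG e_i,C)=H^0(e_iC)$, which vanishes when $C$ has no cohomology in degree $0$, and the displayed vanishing follows.

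Finally I would assemble condition~(3). For non-isomorphic indecomposables $X=\nu_n^s\GG e_{u_a}$ and $Y=\nu_n^t\GG e_{u_b}$ with $\deg X\le\deg Y$, using that $\nu_n$ is an autoequivalence reduces $\Hom_{\caU}(X,Y)$ to $\Hom_{\Db(\GG)}(\GG e_{u_a},\nu_n^{m}\GG e_{u_b})$ with $m=t-s$. A short degree comparison shows that $m<0$ would force $\deg X>\deg Y$, contrary to hypothesis, so $m\ge0$; the case $m\ge1$ is killed by the vanishing above; and for $m=0$ the two objects lie in one copy of $\add\GG$, where $\deg$ is injective, so $\deg X\le\deg Y$ together with $X\not\simeq Y$ gives $\deg X<\deg Y$, while a nonzero morphism in $\add\GG$ would require $\deg X\ge\deg Y$ because $d_{Q^{op}}$ is non-increasing along paths of $\olQ^{op}$. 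Thus $\Hom_{\caU}(X,Y)=0$ in all cases, establishing~(3). I expect the $\Hom$-vanishing between distinct $\nu_n$-orbits to be the only real obstacle; everything else is formal once the amplitude estimate $\nu_n(\Db(\GG)^{\ge a})\subseteq\Db(\GG)^{\ge a}$ is in hand.
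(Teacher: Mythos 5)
Your proof is correct and takes essentially the same route as the paper: a lexicographic degree built from the $\nu_n$-power and the path order on $Q_0$, with the cross-power vanishing $\hhom(\GG e_i,\nu_n^m\GG e_j)=0$ for $m\ge 1$ coming from the cohomological amplitude of $\nu_n$ (the paper simply cites Proposition 2.3 of \cite{hio} for this step) and the same-power case handled by acyclicity of $Q$ together with $\nu_n$ being an autoequivalence. Your write-up is merely more explicit than the paper's about conditions (1) and (2) and about the pairwise non-isomorphy of the objects $\nu_n^t\GG e_u$.
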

\begin{proof}
Using the path order $Q_0=\{u_1,\ldots,u_l\}$ of $Q$.
Define \eqqcn{}{\deg(\nu_n^{r} \GG e_{u_t})< \deg( \nu_n^{r'} \GG e_{u_{t'}})} if $r < r'$, or $r=r'$ and $u_t <u_{t'}$.
Then \eqqcn{}{\Hom_{\caU}(\nu_n^{r} \GG e_{u_t}, \nu_n^{r'} \GG e_{u_{t'}}) =0} for $r < r'$, by Proposition 2.3 of \cite{hio14}.
And  \eqqcn{}{\Hom_{\caU}(\nu_n^{r} \GG e_{u_t}, \nu_n^{r} \GG e_{u_{t'}}) = \Hom_{\caU}( \GG e_{u_t},  \GG e_{u_{t'}}) = \Hom_{\GG}( \GG e_{u_t},  \GG e_{u_{t'}})=0} for $t < t'$, since $\nu_n$ is an auto-equivalence.
So \eqqcn{}{\Hom_{\caU}(\nu_n^{r} \GG e_{u_t}, \nu_n^{r'} \GG e_{u_{t'}}) =0} if $\deg(\nu_n^{r} \GG e_{u_t})< \deg( \nu_n^{r'} \GG e_{u_{t'}})$, and $\caU$ is an Orlov category.
\end{proof}

By Lemma \ref{globaldim}, $\GG$ is always Koszul algebra.
If $\dtL$ is Koszul for its quadratic dual $\LL =\GG^{!,op}$, we say $\GG$ is {\em of infinite type}.
In this case, $Q(\caM^+)$ and $Q(\caM^-)$ are both $\tau$-mature.
By Theorem \ref{equiv}, we see that  $\caM^+$ and $\caM^-$ have $n$-almost split sequences if they are $n$-rigid.
Consider the following conditions:
\begin{cdns}\label{conditionbb}
\begin{enumerate}
\item \label{cbone}$\caM^- $ is $n$-rigid, and for $i,j\in Q_0$, $s,t \ge 0$, $$\hhom_{\caU_0^-} (\nu_n^{-s} \GG e_i,\nu_n^{-t} \GG e_j) \simeq \hhom_{\GG} (\ttn{-s} \GG e_i,\ttn{-t} \GG e_j)$$ when $\nu_n^{-s} \GG e_i,\nu_n^{-t} \GG e_j$ are in $\caU^-_0$.

\item \label{cbtwo}$\caM^+ $ is $n$-rigid, and  for $i,j\in Q_0$, $s,t \ge 0$,  $$\hhom_{\caU_0^+} (\nu_n^s \GG e_i,\nu_n^t \GG e_j) \simeq \hhom_{\GG} (\ttn{s-1}D \GG e_i,\ttn{t-1} D\GG e_j)$$ when $\nu_n^{s} \GG e_i,\nu_n^{t} \GG e_j$ are in $\caU^+_0$.
\end{enumerate}
\end{cdns}
We have the following Lemma.

\begin{lem}\label{ZQ}
Let $\GG$ be an acyclic $n$-slice algebra  of infinite type.

If $\GG$ satisfies (\ref{cbone}) of Condition \ref{conditionbb}, then $\caU$ has source sequences.

If $\GG$ satisfies (\ref{cbtwo}) of Condition \ref{conditionbb}, then $\caU$ has sink  sequences.
\end{lem}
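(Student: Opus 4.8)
The plan is to produce the source sequences by transporting the $n$-almost split sequences of the preprojective-type category $\caM^{-}$ into $\caU$ along the morphism-space identification of Condition \ref{conditionbb}(\ref{cbone}), and to use the Orlov grading of Lemma \ref{orlovdev} to dispose of the remaining test objects. I treat the source-sequence assertion; the sink-sequence assertion follows by the dual argument, with $\caM^{+}$, Condition \ref{conditionbb}(\ref{cbtwo}) and $\caU_{0}^{+}$ replacing $\caM^{-}$, (\ref{cbone}) and $\caU_{0}^{-}$. As $\GG$ is strongly Koszul, $Q(\caM^{-})$ is $\tau$-mature, and Condition \ref{conditionbb}(\ref{cbone}) supplies the $n$-rigidity of $\caM^{-}$; hence Theorem \ref{equiv} applies and $\caM^{-}$ has an $n$-almost split sequence for each non-projective indecomposable. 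Such a sequence is in particular a source sequence of its initial term, so every indecomposable $\ttn{-r}\GG e_{i}$ of $\caM^{-}$ is the first term of a source sequence in $\caM^{-}$, namely the source part of the $n$-almost split sequence whose sink term is the non-projective object $\ttn{-r-1}\GG e_{i}$ (nonzero in the representation-infinite case). By Lemma \ref{orlovdev}, $\caU$ is a Hom-finite Krull--Schmidt Orlov category with $\End X\simeq k$ for each indecomposable $X$, so $J_{\caU}$ is detected object-wise and $\nu_{n}$ restricts to an invertible auto-equivalence of $\caU$ preserving $J_{\caU}$.

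Fix $N=\nu_{n}^{-s}\GG e_{i}\in\caU_{0}^{-}$ with $s\ge 0$. Under the identification $\nu_{n}^{-s}\GG e_{i}\leftrightarrow\ttn{-s}\GG e_{i}$, I read Condition \ref{conditionbb}(\ref{cbone}) as a composition-compatible isomorphism of morphism spaces, so that $\caU_{0}^{-}\simeq\caM^{-}$; transporting the chosen source sequence then yields a complex $N\to C_{1}\to C_{2}\to\cdots$ in $\caU_{0}^{-}\subset\caU$ whose maps lie in $J_{\caU}$. I then verify condition (ii) of the definition of a source sequence against an arbitrary indecomposable $X\in\caU$. Since the maps are radical and $\caU$ is Orlov, each $C_{j}$ satisfies $\deg C_{j}<\deg N$. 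If $X\in\caU_{0}^{-}$, the sequence $\cdots\to\Hom_{\caU}(C_{1},X)\to J_{\caU}(N,X)\to 0$ coincides, via Condition \ref{conditionbb}(\ref{cbone}), with the exact sequence furnished by the $n$-almost split sequence in $\caM^{-}$, hence is exact. If $X\in\caU_{0}^{+}$, then $X$ has $\nu_{n}$-power at least $1$, so $\deg X>\deg N>\deg C_{j}$; the Orlov property forces $J_{\caU}(N,X)=0$ and $\Hom_{\caU}(C_{j},X)=0$, and the sequence is trivially exact. As these indecomposables additively generate $\caU$, condition (ii) holds on all of $\caU$, so $N$ has a source sequence in $\caU$.

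It remains to reach every object. Each indecomposable of $\caU$ is $\nu_{n}^{t}\GG e_{i}$ for some $t\in\zZ$ and $i\in Q_{0}$; those with $t\le 0$ lie in $\caU_{0}^{-}$ and have just been handled, while for arbitrary $t$ I apply $\nu_{n}^{t}$ to the source sequence of $\GG e_{i}=\nu_{n}^{0}\GG e_{i}\in\caU_{0}^{-}$. Since $\nu_{n}$ is an invertible auto-equivalence of $\caU$ preserving $J_{\caU}$ and the exactness demanded in condition (ii), this produces a source sequence of $\nu_{n}^{t}\GG e_{i}$ for every $t$. Hence $\caU$ has source sequences, and dually, under Condition \ref{conditionbb}(\ref{cbtwo}), sink sequences.

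I expect the main obstacle to be the ambient verification of condition (ii): the $n$-almost split property only guarantees exactness of the Hom-sequence tested against the objects of $\caM^{-}\simeq\caU_{0}^{-}$, whereas a source sequence in $\caU$ must remain exact against the preinjective-type objects of $\caU_{0}^{+}$ as well. The Orlov grading of Lemma \ref{orlovdev} is precisely what annihilates those extra contributions, while Condition \ref{conditionbb}(\ref{cbone}) makes the transported complex literally the $\caM^{-}$-sequence on the remaining test objects; checking simultaneously that the transported maps stay radical in $\caU$ and that the complex property survives the identification is the delicate point. In the $\ttn{}$-finite case one has $\caU\neq\caU_{0}$ and the two halves overlap, so the clean degree dichotomy above breaks down; the representation-infinite setting of the intended application is what rules this out.
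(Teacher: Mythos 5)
Your proof is correct and follows essentially the same route as the paper: transport the $n$-almost split sequences of $\caM^-$ (supplied by Theorem \ref{equiv} via $\tau$-maturity from strong Koszulity and $n$-rigidity from Condition \ref{conditionbb}(\ref{cbone})) into $\caU$ through the composition-compatible identification $\caU_0^-\simeq\caM^-$, kill the test objects of positive $\nu_n$-power with the Orlov grading of Lemma \ref{orlovdev}, and propagate by the autoequivalence $\nu_n$ — the paper merely shifts the test object down by $\nu_n^{-r}$ where you shift the sequence up by $\nu_n^{t}$. The caveat you raise about the $\ttn{}$-finite case (where $\caU\neq\caU_0$ and the degree dichotomy fails) applies equally to the paper's own argument, so it is not a gap relative to it.
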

\begin{proof}
We prove the first assertion, the second  follows from the duality.

Assume that $\GG$ satisfies (\ref{cbone}) of Condition \ref{conditionbb}.
For each indecomposable objects $X$ in $\caU$, there is an integer $r$ and an $i\in Q_0$ such $X =\nu_n^{r}  \GG e_i $.
By Theorem \ref{equiv}, we have an $n$-almost split sequence \eqqc{devnass}{M_{n+1}= e_{i}\GG \to M_n \to \cdots \to M_1 \to M_0 = \ttn{-1} e_i \GG }
in $\caM^-$.
By Lemma \ref{orlovdev} and the proof of Lemma \ref{inductivestep}, we have that for each $j\in Q_0$ and each $t=0,1,\ldots, n+1$, there are integers $0\le r(t,j) \le 1$ such that \eqqcn{}{M_t \simeq \bigoplus (\ttn{-r(t,j)}\GG e_j)^{s(t,j)}.}
Take \eqqcn{}{M'_t = \bigoplus (\nu_n^{-r(t,j)}\GG e_j)^{s(t,j)}} for $t=0,1,\ldots, n+1$.

Now let \eqqcn{}{N_t = \bigoplus (\nu_n^{-r(t,j)+r}\GG e_j)^{s(t,j)}} for $t=0,1,\ldots, n+1$, then $X=N_{n+1}$, and we have a sequence
\eqqc{sourcesequenceinU}{X= N_{n+1} = \nu_n^{r}\GG e_i \to N_n \to \cdots \to N_1 \to N_0= \nu_n^{r+1}\GG e_i}
in $\Db(\GG)$ with each term in $\caU$.

For each indecomposable objects $Z\in \caU$, \eqqcn{}{Z = \nu_n^h \GG e_{j'}} for some $h\in\zZ$, $j'\in Q_0$.
Let \eqqcn{}{Z' = \nu_n^{h-r} \GG e_{j'}.}

If $h >r$ then \eqqcn{}{hl > (-r(t,j)+r)l ,} and we have \eqqcn{}{\Hom_{\caU}(N_t, \nu_n^h \GG e_{j'}) =0} by Lemma \ref{orlovdev}.

If $h\le r$, then $h-r \le 0$ and  $Z'$ is in $\caU^-$.
So we have a commutative diagram with isomorphic vertical maps
\small\eqqc{devcoma}{
\arr{ccccccccc}{
\cdots \to &\hhom_{\caU}(N_{n-1},Z )& \to &\hhom_{\caU}(N_n,Z) &\to &\hhom_{\caU}(X,Z)\\
&\downarrow\simeq &&\downarrow\simeq &&\downarrow\simeq &\\
\cdots \to &\hhom_{\caU}(M'_{n-1},Z' ) & \to&\hhom_{\caU}(M'_n,Z') &\to &\hhom_{\caU}(M'_{n+1},Z')\\
}}
\normalsize
since $\nu_n$ is an autoequivalence on $\caU$.

Let \eqqcn{}{Z'' = \ttn{h-r} \GG e_{j'},} then $Z''$ is in $\caM^{-}$, and we have a commutative diagram with isomorphic vertical maps
\small\eqqc{devcomb}{
\arr{cccccccccccc}{
\cdots \to &\hhom_{\caU}(M'_{n-1},Z' ) & \to&\hhom_{\caU}(M'_n,Z') &\to &\hhom_{\caU}(M'_{n+1},Z') &\\
&\downarrow\simeq &&\downarrow\simeq &&\downarrow\simeq &\\
\cdots \to&\hhom_{\GG}(M_{n-1},Z'' ) & \to &\hhom_{\GG}(M_n,Z'') &\to  &\hhom_{\GG}(M_{n+1},Z'') &
 \\
}}\normalsize
by our assumption.
Since \eqref{devnass} is an $n$-almost split sequence, we have an exact sequence
\eqqcn{devsource}{\cdots \to \hhom_{\GG}(M_{n-1},Z'' )  \to \hhom_{\GG}(M_n,Z'') \to  \hhom_{\GG}(M_{n+1},Z'') \to  \caS_{M_{n+1}} \to 0,}
where $\caS_{M_{n+1}} $ is the simple $\caM$-module with \eqqcn{}{\caS_{M_{n+1}}(Z'') =0 } for indecomposable $Z''\not\simeq M_{n+1}$ and  \eqqcn{}{\caS_{M_{n+1}}(M_{n+1}) =k =\hhom_{\GG}(M_{n+1},M_{n+1}).}
Thus \eqqcn{}{\hhom_{\GG}(M_{n+1},Z'') = J_{\GG}(M_{n+1},Z'')} when $M_{n+1} \not\simeq Z''$, and \eqqcn{}{ J_{\GG}(M_{n+1},M_{n+1})=0.}
So \eqqcn{devssforMn}{\cdots \to \hhom_{\GG}(M_{n-1},Z'' )  \to \hhom_{\GG}(M_n,Z'') \to  J_{\GG}(M_{n+1},Z'') \to  0} for any indecomposable $Z''$ in $\caM^{-}$.
Using \eqref{devcoma} and \eqref{devcomb}, we get that \eqqcn{devsource}{\cdots \to \hhom_{\caU}(N_{n-1},Z )  \to \hhom_{\caU}(N_n,Z) \to  J_{\caU}(N_{n+1},Z) \to  0,} for any indecomposable $Z$ in $\caU$.
This proves that \eqref{sourcesequenceinU} is a source sequence in $\caU$, thus $\caU$ has source sequences.
\end{proof}

\begin{thm}\label{dual:derequiv}
Assume that  $\GG$ is an acyclic $n$-slice algebra  of infinite type.
If $\GG$ satisfies one of the conditions in Condition \ref{conditionbb}, then there is an equivalence $\Theta$  from $\caU$ to $\bcaG$ such that \eqqcn{}{\tau \Theta \simeq \Theta\nu_n.}
\end{thm}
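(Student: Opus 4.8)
The plan is to deduce the theorem from Proposition \ref{relation:local}, applied to the Krull--Schmidt category $\caC = \caU$ together with the object-correspondence $\Theta$ that sends $\nu_n^t \GG e_u$ to $\olG e_{\tau^t u}$ for $u \in Q_0$ and $t \in \zZ$; the asserted relation $\tau\Theta \simeq \Theta\nu_n$ is then built into this assignment, since $\tau\,\Theta(\nu_n^t\GG e_u) = \olG e_{\tau^{t+1}u} = \Theta(\nu_n^{t+1}\GG e_u) = \Theta\nu_n(\nu_n^t\GG e_u)$. First I would treat the case that $\GG$ satisfies (\ref{cbone}) of Condition \ref{conditionbb}, where Lemma \ref{ZQ} provides source sequences; the case of (\ref{cbtwo}), which instead provides sink sequences, follows by the dual argument (cf.\ Proposition \ref{nslice:opp}). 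Because $\GG$ is strongly Koszul, the integer $q$ attached to $\dtL$ is infinite, so $\caU = \caU_0$, every object $\nu_n^t\GG e_u$ is nonzero, and the quiver $Q_{\caU}$ realized by $\caU$ is the whole of $\zZ|_{n-1}Q^{op,\perp}$, which is $\tau$-mature; this gives conditions (i) and (ii) of Proposition \ref{relation:local}, with distinguished subquiver $Q'$ the half indexing $\caU_0^-$, containing the complete $\tau$-slice $Q = Q[0]$.

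Next I would install the base equivalence demanded by condition (iii). By Lemma \ref{nrigidM} the category $\caM^-$ is $n$-rigid, and the hom-identification in (\ref{cbone}) of Condition \ref{conditionbb} shows that the assignment $\nu_n^{-s}\GG e_i \mapsto \ttn{-s}\GG e_i$ is a Hom-preserving bijection on indecomposables, hence an equivalence $\caU_0^- \simeq \caM^-$. Composing with the equivalence $\Theta^- : \caM^- \to \caG(\caM^-)$ furnished by Theorem \ref{equiv} yields an equivalence of $\caU_0^-$ onto the full subcategory of $\bcaG$ spanned by the objects $\olG e_{\tau^{-s} i}$, which is exactly the restriction of $\Theta$ to $\caU_0^-$. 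This realizes (iii) on a subcategory that already contains a complete $\tau$-slice.

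It remains to verify conditions (iv) and (v) and to extend. Condition (v) is immediate from Lemma \ref{orlovdev}: $\caU$ is an Orlov category and $\Theta$ preserves degree, so $\Hom_{\caU}(X,Y) = 0$ whenever $\deg X < \deg Y$. Condition (iv) is the content of Lemma \ref{ZQ}: under (\ref{cbone}) the category $\caU$ has source sequences, and the source sequence of $\nu_n^r\GG e_i$ constructed there is, term by term, the $\Theta$-image of the source sequence of $\olG e_{\tau^r i}$ arising from the Koszul complex of Proposition \ref{KoszulinG}. With (i)--(v) available, Proposition \ref{relation:local} extends the base equivalence of the previous paragraph, by induction in the $\nu_n$-direction, to a full dense functor $\Theta : \caU \to \bcaG$ whose kernel is generated by the relations of $\zZ|_{n-1}Q^{op,\perp}$; hence $\Theta$ is an equivalence, and the intertwining $\tau\Theta \simeq \Theta\nu_n$ holds both on objects, as computed above, and on morphisms, by the uniqueness of the extension in Proposition \ref{relation:local}.

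The step I expect to be the main obstacle is the control of the ``mixed'' morphism spaces $\Hom_{\caU}(\nu_n^r\GG e_i, \nu_n^{r'}\GG e_j)$ with $r \le 0 < r'$, that is, those crossing the seam between $\caU_0^-$ and $\caU_0^+$: Condition \ref{conditionbb} only governs Hom-spaces internal to $\caU_0^-$ or to $\caU_0^+$, so these mixed spaces are not given a priori and must be pinned down inductively through the source sequences. This is exactly where strong Koszulness is indispensable, since $q = \infty$ guarantees $\tau$-maturity at every stage and hence that no source sequence of Lemma \ref{ZQ} degenerates; together with the Orlov vanishing of Lemma \ref{orlovdev} this makes the induction of Proposition \ref{relation:local} reach every vertex of the bi-infinite quiver $\zZ|_{n-1}Q^{op,\perp}$ and compute the mixed Hom-spaces correctly.
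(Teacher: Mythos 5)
Your proposal is correct and follows essentially the same route as the paper's proof: identify $\caU_0^-$ with $\caM^-$ via Condition \ref{conditionbb}(\ref{cbone}) and hence with $\bcaG^-$ via Theorem \ref{equiv}, then extend across all of $\zZ|_{n-1}Q^{op,\perp}$ by feeding the source sequences of Lemma \ref{ZQ} and the Orlov property of Lemma \ref{orlovdev} into Proposition \ref{relation:local}, with the intertwining $\tau\Theta\simeq\Theta\nu_n$ built into the object assignment. One cosmetic slip: you cite Lemma \ref{nrigidM} for the $n$-rigidity of $\caM^-$, but that lemma requires Iyama's $(n,n)$-condition, which is not among the hypotheses here; this is harmless because $n$-rigidity is already an explicit part of Condition \ref{conditionbb}(\ref{cbone}).
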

\begin{proof} Assume that $\GG$ is an $n$-slice algebra  of infinite type, then it is an $n$-representation-infinite algebra  and  satisfies (\ref{cbone}) of Condition \ref{conditionbb}, we have $\caU^- $ is equivalent to $\caM^-$ which assigns $\nu_n^{-t}\GG e_{u} $ to $ \ttn{-t} \GG e_{u}$.
Since $\GG$ is  of infinite type, $\caU =\caU_0$ and each convex full subquiver of $\zzs{n-1} Q^{op}$ is $\tau$-mature.

By Theorem \ref{equiv}, \eqqcn{}{\Theta^- : \ttn{-t} \GG e_{u} \to \olG e_{\tau^{-t}u} } defines an equivalent from $\caM^-$ to $\bcaG^-$.
So we have an equivalence from $\caU^- $ to $\bcaG^-$ sending $\nu_n^{-t}\GG e_{u} $ to $ \olG e_{\tau^{-t}u} $ for $t\ge 0$.
Clearly this extends to a correspondence $\Theta$ from the objects of $\caU$ to $\bcaG$ sending $\nu_n^{-t}\GG e_{u} $ to $ \olG e_{\tau^{-t}u} $ for all $t$.
Thus \eqqcn{}{Q_{\caU,0} =\{(u,t) | \nu^{-t} u\in \caU \} = \zZ Q^{op}_0} and \eqqcn{}{Q_{\caU} =\zzs{n-1}Q^{op},} $Q_{\caU^-}$ is a terminating full subquiver of $Q_{\caU}$ which obviously contains a complete $\tau$-slice.
By Lemma \ref{ZQ}, $\caU$ has source sequences, and by the proof of Lemma \ref{ZQ}, it satisfies (iv) of Proposition \ref{relation:local}.
So by Lemma \ref{orlovdev} and Proposition \ref{relation:local},  $\Theta$ is an equivalence from $\caU$ to $\bcaG$.
We clearly have that \eqqcn{}{\Theta(\nu_n^{t} \GG e_u) = \tau^{t}\Theta(\GG e_u)}  for $(u,t)\in \zzs{n-1}Q^{op}_0$, so we get the required equivalence.
\end{proof}

As a corollary of Theorem \ref{dual:derequiv}, we can describe the  Auslander-Reiten quiver of the $\nu_n$-closure, similar to Theorem \ref{dual:ARquiver}.

\begin{thm}\label{derived:ARquiver}
Let $\GG$ be  an acyclic $n$-slice algebra of infinite type with bound quiver $Q^\perp$.
If $\GG$ is satisfies one of the Condition  \ref{conditionbb}, then the Auslander-Reiten quiver of the $\nu_n$-closure $\caU(\GG)$ of $\GG$ in $\Db(\GG)$ is $\zZ|_{n-1} Q^{op,\perp}$.
\end{thm}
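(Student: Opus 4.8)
The plan is to deduce the statement directly from Theorem~\ref{dual:derequiv}, treating it as a transport-of-structure argument along the equivalence $\Theta$. By Theorem~\ref{dual:derequiv}, under either of the hypotheses in Condition~\ref{conditionbb} there is an equivalence $\Theta\colon \caU(\GG) \to \bcaG$ with $\tau\Theta \simeq \Theta\nu_n$. Since the Auslander--Reiten quiver of a Hom-finite Krull--Schmidt category is defined intrinsically by the arrow numbers \eqref{noarrows} together with the translation induced by the $n$-almost split sequences, my first step would be to note that $\Theta$ carries all of this data across: it gives a bijection on iso-classes of indecomposables and, being an equivalence, preserves the radical and its square, hence the multiplicities $d_{xy} = \dmn_k J_{\caC}(x,y)/J^2_{\caC}(x,y)$. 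The intertwining $\tau\Theta\simeq\Theta\nu_n$ then matches the translation $\nu_n$ on $\caU$ with the $n$-translation $\tau$ on $\bcaG$, so the two Auslander--Reiten quivers are isomorphic as (stable) $n$-translation quivers.

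It then remains to identify the Auslander--Reiten quiver of $\bcaG = \add(\olG)$ with $\zZ|_{n-1}Q^{op,\perp}$. Here I would use that $\olG$ is the quadratic dual of the $n$-translation algebra $\olL$ attached to $\zZ|_{n-1}Q^{op}$; as $\GG$ is strongly Koszul, every convex subquiver is $\tau$-mature, so by Proposition~\ref{GGnass} the category $\add(\olG)$ has $n$-almost split sequences, namely the Koszul complexes $\xi_i$ of Proposition~\ref{KoszulinG}, and these induce exactly the $n$-translation $\tau$. The indecomposables $\olG e_j$ are indexed by the vertices $j$, and by Proposition~\ref{pathalgcat} one identifies $\add(\olG)$ with the bound path category of $\zZ|_{n-1}Q^{op,\perp}$; the computation $d_{\olG e_i,\olG e_j} = \dmn_k (e_i \olG_1 e_j)$ shows the arrows of the Auslander--Reiten quiver are precisely the arrows of $\zZ|_{n-1}Q^{op,\perp}$ (passing to the quadratic dual leaves the arrow set unchanged), while Lemma~\ref{category:relation}, applied to the first two terms of each $\xi_i$, recovers the relation set $\overline{\rho}^{op,\perp}$. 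Combining this with the previous paragraph gives the claimed description.

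The step I expect to require the most care is the matching of translations rather than of arrows: a priori $\caU$ is only known to be a Hom-finite Krull--Schmidt Orlov category (Lemma~\ref{orlovdev}), so one must be sure that $\nu_n$ genuinely \emph{is} the Auslander--Reiten translation of $\caU$, that is, that it is induced by actual $n$-almost split sequences in $\caU$ and not merely an abstract autoequivalence. This is exactly what Lemma~\ref{ZQ} secures, producing source and sink sequences in $\caU$ whose images under $\Theta$ are the $n$-almost split sequences in $\bcaG$; together with $\tau\Theta\simeq\Theta\nu_n$ this pins the translation down. Once that is in place the isomorphism of the two Auslander--Reiten quivers is forced, and composing the two identifications exhibits the Auslander--Reiten quiver of the $\nu_n$-closure $\caU(\GG)$ as $\zZ|_{n-1}Q^{op,\perp}$.
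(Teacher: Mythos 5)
Your proposal is correct and takes essentially the same approach as the paper: the paper states Theorem \ref{derived:ARquiver} as an immediate corollary of Theorem \ref{dual:derequiv}, i.e.\ it transports the Auslander--Reiten data of $\caU$ along the equivalence $\Theta$ (with $\tau\Theta\simeq\Theta\nu_n$) and identifies the Auslander--Reiten quiver of $\bcaG$ with $\zZ|_{n-1}Q^{op,\perp}$ via the bound path category description. Your write-up merely makes explicit the details the paper leaves implicit, in particular that an equivalence preserves $J/J^2$ and that Lemma \ref{ZQ} guarantees $\nu_n$ is realized by genuine source and sink sequences rather than being just an abstract autoequivalence.
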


Recall that an algebra $\GG$ is called {\em $n$-representation infinite} if $\nu_n^{-t} \GG $ are modules for all $t \ge 0$ (\cite{hio14}).

\begin{cor}\label{taunclosure:nass}
If an $n$-slice algebra $\GG$ of infinite type, then

(1). $\caU$ has $n$-almost split sequences;

(2). $\caU$ is equivalent to $\bcaG$.

(3). the Auslander-Reiten quiver $\caU$ is $\zZ|_{n-1} Q^{op,\perp}$.
\end{cor}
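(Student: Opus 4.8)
The plan is to derive all three statements from Theorem \ref{dual:derequiv} and Theorem \ref{derived:ARquiver} by showing that the hypothesis that $\GG$ is $n$-representation infinite forces part (\ref{cbone}) of Condition \ref{conditionbb} to hold. The decisive input is the definition itself: $n$-representation infinite means $\nu_n^{-t}\GG$ is a $\GG$-module for every $t\ge 0$, so each $\nu_n^{-t}\GG$ is concentrated in degree $0$ and hence equals $\ttn{-t}\GG$; the same holds after multiplying by a primitive idempotent $e_j$. Since the indecomposables $\nu_n^{-t}\GG e_j$ are then pairwise non-isomorphic and nonzero, $\GG$ is not $\ttn{}$-finite, and hence $\caU=\caU_0$ by the discussion preceding Condition \ref{conditionbb}; moreover the objects of $\caU_0^-$ are exactly the modules $\ttn{-t}\GG e_j$, matched with the objects of $\caM^-$.

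First I would verify the Hom-comparison in (\ref{cbone}) of Condition \ref{conditionbb}. As $\caU_0^-$ is a full subcategory of $\Db(\GG)$ and all its objects are modules concentrated in degree $0$, for any modules $X,Y$ one has $\hhom_{\Db(\GG)}(X,Y)\simeq\hhom_{\GG}(X,Y)$; applied to $X=\nu_n^{-s}\GG e_i\simeq\ttn{-s}\GG e_i$ and $Y=\nu_n^{-t}\GG e_j\simeq\ttn{-t}\GG e_j$ this yields $\hhom_{\caU_0^-}(\nu_n^{-s}\GG e_i,\nu_n^{-t}\GG e_j)\simeq\hhom_{\GG}(\ttn{-s}\GG e_i,\ttn{-t}\GG e_j)$ for all $s,t\ge 0$. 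For the $n$-rigidity of $\caM^-$—equivalently the vanishing $\Ext_{\GG}^i(\ttn{-s}\GG,\ttn{-t}\GG)=0$ for $0<i<n$ and all $s,t\ge 0$—I would write $\Ext_{\GG}^i(\ttn{-s}\GG,\ttn{-t}\GG)\simeq\hhom_{\Db(\GG)}(\nu_n^{-s}\GG,\nu_n^{-t}\GG[i])$ and appeal to the structural fact, characteristic of $n$-representation infinite algebras, that the $\nu_n$-orbit $\add\{\nu_n^{-t}\GG\mid t\ge 0\}$ is $n$-rigid (\cite{hio}, the same input already used in Lemma \ref{orlovdev}). Thus $\caM^-\subset{\caM^-}^{\perp_n}\cap{}^{\perp_n}\caM^-$, and part (\ref{cbone}) of Condition \ref{conditionbb} holds.

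With Condition \ref{conditionbb} verified, Theorem \ref{dual:derequiv} gives an equivalence $\Theta:\caU\to\bcaG$ with $\tau\Theta\simeq\Theta\nu_n$, which is assertion (2), and Theorem \ref{derived:ARquiver} identifies the Auslander-Reiten quiver of $\caU$ with $\zZ|_{n-1}Q^{op,\perp}$, which is assertion (3). For assertion (1) I would transport the $n$-almost split sequences from $\bcaG$ across $\Theta$: because $\GG$ is strongly Koszul, the trivial extension $\dtL$ of its quadratic dual is Koszul, so the Koszul parameter is $q=\infty$ and every convex subquiver of $\zZ|_{n-1}Q^{op}$ is $\tau$-mature; then by Proposition \ref{KoszulinG} and Theorem 7.2 of \cite{g16} the complexes $\xi_i$ are $n$-almost split sequences in $\bcaG$, and the equivalence $\Theta$ carries them to $n$-almost split sequences in $\caU$.

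The step I expect to be the main obstacle is the $n$-rigidity of $\caM^-$: one must ensure that the vanishing of $\Ext_{\GG}^i(\ttn{-s}\GG,\ttn{-t}\GG)$ in the middle range $0<i<n$ is genuinely supplied by the $n$-representation infinite hypothesis rather than imported from Iyama's $(n,n)$-condition of Lemma \ref{nrigidM}, which is not assumed here. The cleanest route is the derived-category translation above, where the fact that every $\nu_n^{-t}\GG$ is a module is precisely what makes the mixed morphism spaces computable and the $n$-rigidity of \cite{hio} directly applicable.
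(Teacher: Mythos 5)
Your proposal is correct and rests on the same core observation as the paper: $n$-representation infiniteness means each $\nu_n^{-t}\GG e_j$ is a module concentrated in degree zero, hence equals $\ttn{-t}\GG e_j$, which makes the Hom-comparison in Condition \ref{conditionbb} immediate, while the $n$-rigidity of the $\nu_n$-orbit is supplied by \cite{hio} (the paper cites Theorem 1.1 there, which is exactly the structural fact you invoke; your worry that rigidity might have to be smuggled in via Iyama's $(n,n)$-condition is resolved the same way in the paper). The one genuine divergence is in how assertion (1) is obtained. The paper verifies \emph{both} parts of Condition \ref{conditionbb} (the second via the identification $\nu_n^{t}\GG e_j=\ttn{t-1}De_j\GG$ for $t>0$) and then gets $n$-almost split sequences in $\caU$ directly from Lemma \ref{ZQ}, which yields source sequences from part (\ref{cbone}) and sink sequences from part (\ref{cbtwo}). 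You instead verify only part (\ref{cbone}), use it to get the equivalence $\Theta:\caU\to\bcaG$ from Theorem \ref{dual:derequiv}, and then transport the Koszul-complex $n$-almost split sequences of $\bcaG$ (which exist because $q=\infty$ in the strongly Koszul case) back across $\Theta$. This is a legitimate shortcut: $n$-almost split sequences are defined by source and sink sequences, which are purely categorical and preserved under equivalence, so once (2) is established, (1) is free. What the paper's symmetric verification buys is that Lemma \ref{ZQ} produces the source and sink sequences in $\caU$ intrinsically, without first knowing the equivalence; what your route buys is economy, at the cost of making (1) logically dependent on (2) rather than parallel to it.
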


\begin{proof}
We prove (1).
By Theorem 1.1 of \cite{hio14}, $\caM^+$ and $\caM^-$ are both $n$-rigid,.

Since $\GG$ is $n$-representation infinite, we have $\nu_n^{-t} \GG e_j$ and $\nu_n^{t}D e_j \GG $ are modules and thus  $$\arr{cl}{\nu_n^{-t} \GG e_j = H_0(\nu_n^{-t} \GG e_j) =\ttn{-t} \GG e_j & t\ge 0\\ \nu_n^{t}\GG =\nu_n^{t-1} D e_j\GG[-n] = H_0(\nu_n^{t-1}D e_j \GG ) =\ttn{t-1} D e_j \GG & t>0}$$ for all $j\in Q_0$ and $t\ge 0$.
This shows that $\GG$ satisfies both of the conditions in Condition  \ref{conditionbb}, and $\caU$ has $n$-almost split sequences by Lemma \ref{ZQ}.

(2) and (3) follows from Theorem \ref{dual:derequiv} and Theorem \ref{derived:ARquiver}.
\end{proof}

It is interesting to know when an $n$-slice algebra is $n$-representation-finite?
\subsection{Example.}
\label{sub:eg.dev}

If $Q$ is not Dynkin, the path algebra $\GG=kQ$ is representation-infinite.
It is known that $\GG$ is  of infinite type and satisfies conditions in Lemma \ref{ZQ}.
So our results recover the well known fact that $\zZ Q^{op} = \zZ Q$ appears in the Auslander-Reiten quiver of the derived category of $kQ$ as  components containing the shifts of preprojective and preinjective modules \cite{h88}.

{}
\end{document}